\numberwithin{equation}{section}
\theoremstyle{plain}
\newtheorem{theo}{Theorem}[section]
\newtheorem{cor}[theo]{Corollary}
\newtheorem{lem}[theo]{Lemma}
\newtheorem{prop}[theo]{Proposition}
\newtheorem{conj}[theo]{Conjecture}
\theoremstyle{definition}
\newtheorem{defn}[theo]{Definition}
\newtheorem{ex}[theo]{Example}
\newtheorem{rem}[theo]{Remark}
\title{Basis Criteria for Generalized Spline Modules via Determinant}
\author{Selma Altinok \and Samet Sarioglan}
\address{Selma Altinok, Hacettepe University Department of Mathematics, 06800 Beytepe Ankara Turkey.}
\email{sbhupal@hacettepe.edu.tr}
\address{Samet Sarioglan (Corresponding author), Hacettepe University Department of Mathematics, 06800 Beytepe Ankara Turkey.}
\email{ssarioglan@hacettepe.edu.tr}
\begin{document}
\begin{abstract} Given a graph whose edges are labeled by ideals of a commutative ring $R$ with identity, a generalized spline is a vertex labeling by the elements of $R$ such that the difference of the labels on adjacent vertices lies in the ideal associated to the edge. The set of generalized splines has a ring and an $R$-module structure. We study the module structure of generalized splines where the base ring is a greatest common divisor domain. We give basis criteria for generalized splines on cycles, diamond graphs and trees by using determinantal techniques. In the last section of the paper, we define a graded module structure for generalized splines and give some applications of the basis criteria for cycles, diamond graphs and trees.
\end{abstract}

\maketitle

\section{Introduction}
\label{intro}

A classical spline is a collection of polynomials defined on the faces of a polyhedral complex that agree on the intersection of adjacent faces. Classical splines are important tools in approximation theory, numerical analysis, computer graphics and numerical solutions of partial differential equations. Two main problems of the classical spline theory are computing the dimension and finding explicit bases of the vector space of splines up to some degree. Algebraic structure of classical splines is studied by many mathematicians as Billera~\cite{Bil1,Bil2,Bil}, Rose~\cite{Ros1,Ros2} and Schenck~\cite{Sch}. Billera~\cite{Bil1} introduced homological algebraic methods to solve the dimension problem. In~\cite{Sch}, Schenck also used homological algebra to give freeness criteria for the module structure of classical splines. In~\cite{Bil2}, Billera and Rose presented a description of classical splines in terms of dual graph of a polyhedral complex, which leads to generalized spline theory.

Let $R$ be a commutative ring with identity, $G = (V,E)$ be a graph and $\alpha : E \to \{ \text{ideals in }R \}$ be a function that labels edges of $G$ by ideals of $R$. A generalized spline on an edge labeled graph $(G, \alpha)$ is a vertex labeling $F \in R ^{|V|}$ such that for each edge $uv$, the difference $f_u - f_v \in \alpha(uv)$ where $f_u$ and $f_v$ denotes the vertex labels on $u$ and $v$ respectively. The set of all generalized splines on $(G,\alpha)$ over $R$ is denoted by $R_{(G,\alpha)}$.  The set $R_{(G,\alpha)}$ has a ring and $R$-module structure.

Gilbert, Polster and Tymoczko~\cite{Gil} introduced generalized spline theory and showed that if $R$ is a domain then the rank of $R_{(G,\alpha)}$ is equal to $\vert V \vert$. When $R$ is not a domain, Bowden and Tymoczko~\cite{Tym} proved that for a fixed number of vertices $\vert V \vert$, one can find examples of edge labeled graphs $(G, \alpha)$ where $R_{(G, \alpha)}$ has rank $n$ for all $2 \leq n \leq \vert V \vert$. In~\cite{Hand}, Handschy and the others focused on integer generalized splines on cycles. They presented a special type of generalized splines called flow-up classes and showed the existence of smallest flow-up classes on cycles. They also proved that flow-up classes with smallest leading entries form a basis for integer generalized spline modules on cycles. Same argument is proved for arbitrary graphs by Bowden and the others~\cite{Bow}. They also defined two new bases for integer generalized splines on cycles and studied the ring structure of $R_{(G, \alpha)}$. In~\cite{Alt}, we proved the existence of flow-up bases for generalized spline modules on arbitrary graphs over principal ideal domains. If $R$ is not a PID, there may not be a flow-up basis for $R_{(G, \alpha)}$ even it is free; see~\cite{Alt} for an example. Philbin and the others~\cite{Phi} gave an algorithm to produce a minimum generating set for $(\mathbb{Z} / m \mathbb{Z})_{(G, \alpha)}$ as a $\mathbb{Z}$-module. They also extended their algorithm to generalized splines over $\mathbb{Z}$ and gave a method to construct a $\mathbb{Z}$-module basis for $\mathbb{Z}_{(G,\alpha)}$. In~\cite{Dip}, DiPasquale introduced homological algebraic methods in generalized spline theory to give a freeness criteria for $R_{(G,\alpha)}$ under some conditions by using some results of Schenck~\cite{Sch}. DiPasquale also used generalized splines to get some results for the module of derivations of a graphic multi-arrangement.

In this paper we focus on the problem: When does a given set of generalized splines form a basis for $R_{(G, \alpha)}$? We study generalized splines over greatest common divisor domains. A GCD domain $R$ is an integral domain such that any two elements of $R$ have a greatest common divisor. In~\cite{Gjo}, Gjoni studied integer generalized splines on cycles in a senior project supervised by Rose and gave basis criteria for $\mathbb{Z}_{(C_n , \alpha)}$ via determinant of flow-up classes. Their method does not work in general since the existence of flow-up bases is not guaranteed when $R$ is not a PID. We generalize their work to the case where $R$ is a GCD domain. Mahdavi~\cite{Mah} studied integer generalized splines on diamond graph $D_{3,3}$ with Rose and they obtained a partial result under some conditions for basis criteria for $\mathbb{Z}_{(D_{3,3} , \alpha)}$. In this paper we give a complete proof of basis criteria for $R_{(D_{3,3} , \alpha)}$ over any GCD domain. They also conjectured that their result can be generalized to any arbitrary diamond graph $D_{m,n}$. We give a proof of their conjecture and mention some other generalizations of their statement. We also give basis criteria for $R_{(G, \alpha)}$ on any tree over any GCD domain by using determinantal techniques and flow-up bases.

Finally, we define the homogenization  ${\hat R}_{(G, \hat{\alpha})}$ of $R_{(G,\alpha)}$ which is a graded $\hat R$-module, and investigate a freeness relation between these two modules. We also introduce some applications of the basis criteria for cycles, diamond graphs and trees on the graded module structure of generalized splines.

\section{Generalized Splines}
\label{Gsplines}

In this section, we introduce some basic definitions and properties of generalized splines.

\begin{defn} Given a graph $G$ and a commutative ring $R$ with identity, an edge labeling of $G$ is a function $\alpha : E \to \{ \text{ideals in }R \}$ that labels each edge of $G$ by an ideal of $R$. A generalized spline on an edge labeled graph $(G, \alpha)$ is a vertex labeling $F \in R ^{|V|}$ such that for each edge $uv$, the difference $f_u - f_v \in \alpha(uv)$ where $f_u$ denotes the label on vertex $u$. The collection of all generalized splines on a base ring $R$ over the edge labeled graph $(G, \alpha)$ is denoted by $R_{(G,\alpha)}$.
\end{defn}

Throughout the paper we assume that the base ring $R$ is a GCD domain. Each edge of $(G,\alpha)$ is labeled with a generator of the ideal $I$ if the corresponding ideal $I$ is principal. From now on we refer to generalized splines as splines. Let $(G, \alpha)$ be an edge labeled graph with $n$ vertices. We denote the elements of $R_{(G,\alpha)}$ by column matrix notation with entries in order from bottom to top as follows:
\begin{displaymath}
F = \begin{bmatrix} f_n \\ \vdots \\ f_1 \end{bmatrix} \in R_{(G,\alpha)}.
\end{displaymath}
We also use vector notation as $F = (f_1 , \ldots , f_n)$. 

\begin{ex} Let $(G, \alpha)$ be as the figure below.
\begin{figure}[H]
\begin{center}
\scalebox{0.16}{\includegraphics{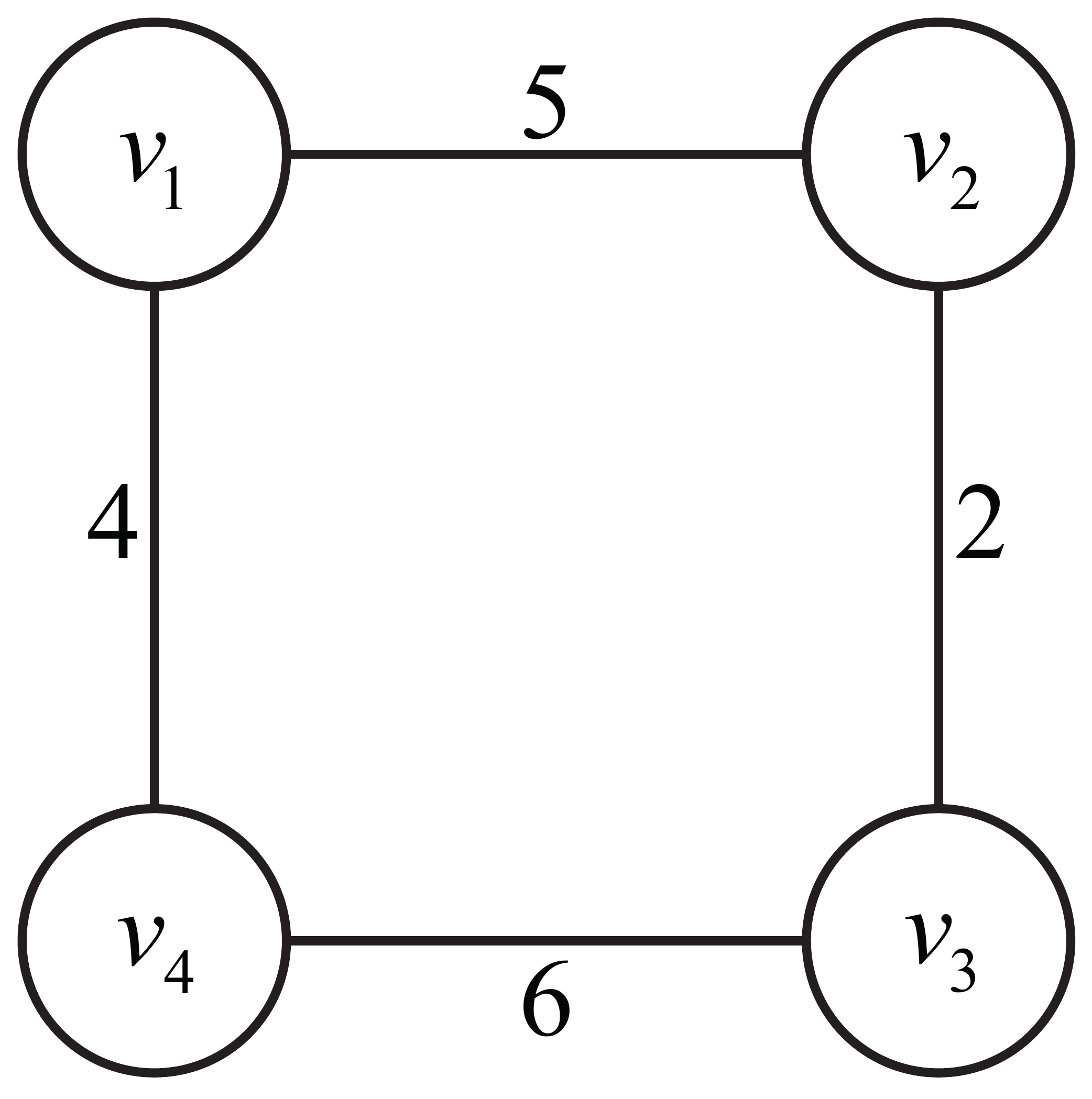}}
\caption{Example of spline}
\label{det1}
\end{center}
\end{figure}
\noindent
A spline over $(G, \alpha)$ can be given by $F = (2,12,14,26)$. 
\end{ex}

The following lemma states that $R_{(G,\alpha)}$ does not depend on the ordering of vertices of $(G, \alpha)$:

\begin{lem} Let $(G,\alpha)$ be an edge labeled graph with $n$ vertices and let $(G',\alpha)$ be the edge labeled graph obtained by reordering the vertices of $(G,\alpha)$ by a permutation $\sigma \in \mathcal{S}_n$. Then $R_{(G,\alpha)} \cong R_{(G',\alpha)}$.
\label{reorder}
\end{lem}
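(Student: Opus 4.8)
The plan is to exhibit an explicit $R$-module isomorphism induced directly by the permutation $\sigma$, since reordering the vertices should do nothing more than permute the coordinates of each spline vector while preserving all the edge-difference conditions. Concretely, I would define a map $\Phi : R_{(G,\alpha)} \to R_{(G',\alpha)}$ by sending a labeling $F = (f_1,\ldots,f_n)$ to the labeling $F' = (f'_1,\ldots,f'_n)$ determined by $f'_{\sigma(i)} = f_i$ for every $i$; equivalently, $F'$ assigns to each vertex of $G'$ the label that $F$ assigned to the corresponding vertex of $G$ before reordering.

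First I would verify that $\Phi$ is well defined, i.e. that $\Phi(F)$ actually lies in $R_{(G',\alpha)}$. The key point is that reordering the vertices by $\sigma$ sends an edge $uv$ of $G$ to the edge $\sigma(u)\sigma(v)$ of $G'$ while keeping the same ideal label $\alpha(uv)$. Hence the spline condition for $F'$ along the edge $\sigma(u)\sigma(v)$ reads $f'_{\sigma(u)} - f'_{\sigma(v)} \in \alpha(uv)$, and by the definition of $F'$ this is exactly $f_u - f_v \in \alpha(uv)$, which holds because $F$ is a spline on $(G,\alpha)$. Since the edges of $G'$ are precisely the images under $\sigma$ of the edges of $G$, every spline condition on $(G',\alpha)$ corresponds to one on $(G,\alpha)$, so $F'$ is indeed a spline.

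Next I would check that $\Phi$ is an $R$-module homomorphism: since permuting coordinates commutes with coordinatewise addition and with scalar multiplication by elements of $R$, we have $\Phi(F + G) = \Phi(F) + \Phi(G)$ and $\Phi(rF) = r\,\Phi(F)$ for all $r \in R$, which is immediate from the definition $f'_{\sigma(i)} = f_i$. Finally, bijectivity follows because the analogous map built from $\sigma^{-1}$ gives a two-sided inverse to $\Phi$, so $\Phi$ is an isomorphism of $R$-modules.

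There is no real obstacle here; the entire content is the observation that reordering preserves edge labels, so the spline conditions on $(G,\alpha)$ and $(G',\alpha)$ are in bijective correspondence under $\sigma$. The only point requiring a moment of care is to state precisely that $\alpha$ is unchanged by the reordering (the same function labels the relabeled edges), which is what makes the coordinate permutation respect the defining congruences.
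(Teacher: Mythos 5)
Your proposal is correct and follows essentially the same route as the paper: both exhibit the coordinate-permutation map induced by $\sigma$ (the paper's $\sigma^{\star}$, your $\Phi$, differing only in the harmless convention of using $\sigma$ versus $\sigma^{-1}$ to index the permuted entries), and verify that the spline conditions are preserved because edges and their ideal labels correspond under the reordering. You are in fact slightly more thorough than the paper, which checks only well-definedness and leaves the $R$-linearity and the inverse map implicit.
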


\begin{proof} We show that $\sigma$ induces an $R$-module isomorphism  $\sigma ^{\star}: R_{(G,\alpha)} \to R_{(G',\alpha)}$ by reordering the components of a spline $F \in R_{(G,\alpha)}$ as
\begin{displaymath}
\begin{array}{ccl}
\sigma ^{\star} : R_{(G,\alpha)} &\to& R_{(G',\alpha)} \\
F = (f_1 , \ldots , f_n ) &\to& (f_{\sigma(1)} , \ldots , f_{\sigma(n)}).
\end{array}
\end{displaymath}
In order to see that $\sigma ^{\star}(F) \in R_{(G',\alpha)}$, let $v_i , v_j$ be two adjacent vertices of $R_{(G',\alpha)}$. By the definition of $\sigma ^{\star}$, $\sigma ^{\star}(F)_i = f_{\sigma(i)}$ and $\sigma ^{\star}(F)_j = f_{\sigma(j)}$ are connected by the same edge $e_{ij}$ on both $R_{(G,\alpha)}$ and $R_{(G',\alpha)}$. Since $F \in R_{(G,\alpha)}$, we have $f_{\sigma(i)} - f_{\sigma(j)} \in \alpha(e_{ij})$. Thus $\sigma ^{\star}(F)_i - \sigma ^{\star}(F)_j \in \alpha(e_{ij})$ and so $\sigma ^{\star}(F) \in R_{(G',\alpha)}$. 
\end{proof}

The following example illustrates the reordering operation:

\begin{ex} Let $(G,\alpha)$ be as in the figure below and $\sigma = (13524) \in \mathcal{S}_5$.

\begin{figure}[H]
\begin{center}
\scalebox{0.15}{\includegraphics{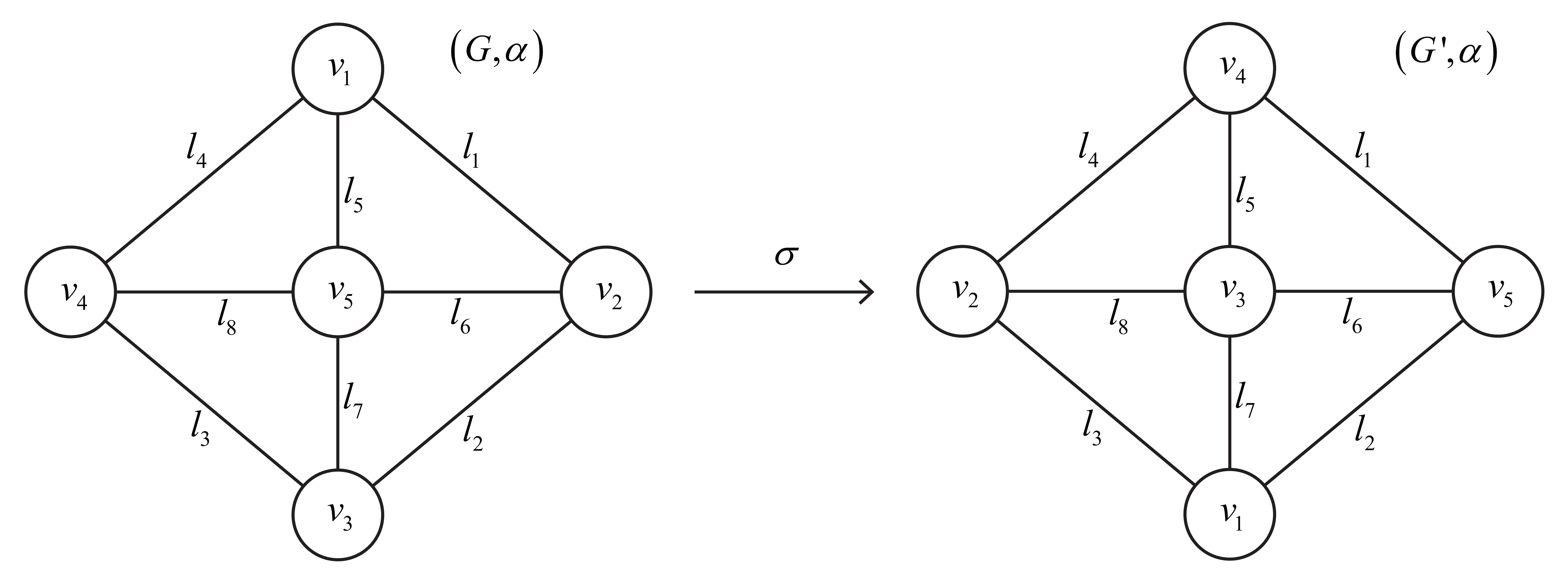}}
\caption{Edge labeled graphs $(G,\alpha)$ and $(G',\alpha)$}
\label{sp73}
\end{center}
\end{figure}
\noindent
Here we have
\begin{displaymath}
\begin{gathered}
\sigma ^{\star} : R_{(G,\alpha)} \to R_{(G',\alpha)} \\
\sigma ^{\star}  (f_1 , f_2 , f_3 , f_4 , f_5) = (f_3 , f_4 , f_5 , f_1 , f_2).
\end{gathered}
\end{displaymath}
\end{ex}

A special type of splines, which is called flow-up classes, is a useful tool to find $R$-module bases for $R_{(G,\alpha)}$.

\begin{defn} Let $(G, \alpha)$ be an edge labeled graph with $n$ vertices. Fix $i$ with $1 \leq i \leq n$. A flow-up class $F^{(i)}$ is a spline in $R_{(G,\alpha)}$ with the components $F^{(i)}_i \neq 0$ and $F^{(i)}_j = 0$ for all $j < i$.
\end{defn}

\begin{ex} Consider the edge labeled graph $(G, \alpha)$ in Figure~\ref{det1} again. Flow-up classes on $(G, \alpha)$ can be given by $F^{(1)} = (1,1,1,1)$, $F^{(2)} = (0,10,0,0)$, $F^{(3)} = (0,0,2,0)$ and $F^{(4)} = (0,0,0,12)$.
\end{ex}

We can set $F^{(1)} = (1, \ldots ,1)$ for any graph. In order to see the existence of $F^{(i)}$ with $i > 1$ on any graph, label $F^{(i)}_i$ by the product of all edge labels on $(G, \alpha)$ and label other vertices by zero. Bowden and the others~\cite{Bow} proved that flow-up classes with smallest leading entries form a module basis for $R_{(G,\alpha)}$ where $R$ is an integral domain. In~\cite{Alt}, we proved the existence of flow-up bases on any graphs over principal ideal domains. If $R$ is not a domain, then $R_{(G,\alpha)}$ may not have a flow-up basis even it is free.

In the next section we begin to discuss determinantal techniques for splines.

\section{Determinant}
\label{determinant}

Let $(G, \alpha)$ be an edge labeled graph with $n$-vertices. Let $A = \{ F_1 , \ldots , F_n \} \subset R_{(G,\alpha)}$ with $F_i = (f_{i1} , \ldots , f_{in})$. We can rewrite $A$ in a matrix form, whose columns are the elements of $A$ such as
\begin{displaymath}
A = \begin{bmatrix} f_{1n} & f_{2n} & \ldots & f_{nn} \\ \vdots \\ f_{12} & f_{22} & \ldots & f_{n2} \\ f_{11} & f_{21} & \ldots & f_{n1} \end{bmatrix}.
\end{displaymath}
The determinant $\big\vert A \big\vert$ is denoted by $\big\vert F_1 \text{ } F_2 \text{ } \ldots \text{ } F_n \big\vert$. We will give basis criteria for $R_{(G,\alpha)}$ by using this determinant.

\begin{prop} Let $(G, \alpha)$ be an edge labeled graph with $n$-vertices. Let $\{ F_1 , \ldots , F_n \}$ forms a basis for $R_{(G,\alpha)}$ and let $\{ G_1 , \ldots , G_n \} \subset R_{(G,\alpha)}$. Then $\big\vert F_1 \text{ } F_2 \text{ } \ldots \text{ } F_n \big\vert$ divides $\big\vert G_1 \text{ } G_2 \text{ } \ldots \text{ } G_n \big\vert$.
\label{detprop2}
\end{prop}

\begin{proof} See Lemma 5.1.4. in~\cite{Gjo}.
\end{proof}

\begin{cor} Let $(G, \alpha)$ be an edge labeled graph with $n$-vertices. Let $\{ F_1 , \ldots , F_n \}$ forms a basis for $R_{(G,\alpha)}$. If $\{ G_1 , \ldots , G_n \} \subset R_{(G,\alpha)}$ is another basis for $R_{(G,\alpha)}$, then $\big\vert F_1 \text{ } F_2 \text{ } \ldots \text{ } F_n \big\vert = r \big\vert G_1 \text{ } G_2 \text{ } \ldots \text{ } G_n \big\vert$ where $r \in R$ is a unit.
\label{detcor3}
\end{cor}

The following lemma shows the relation between the determinant of a basis of $R_{(G,\alpha)}$ and $R_{(G',\alpha)}$ where $G'$ is obtained by reordering the vertices of $G$:

\begin{lem} Let $(G, \alpha)$ be an edge labeled graph with $n$-vertices and let $\{ F_1 , \ldots , F_n \}$ forms a basis for $R_{(G,\alpha)}$. Let $\sigma \in \mathscr{S}_n$ be a permutation and let $\sigma \big( (G, \alpha) \big) = (G' , \alpha)$ be a vertex reordering of $(G, \alpha)$ as defined in Lemma~\ref{reorder}. If $\{ G_1 , \ldots , G_n \}$ is a basis for $R_{(G',\alpha)}$, then $\big\vert F_1 \text{ } F_2 \text{ } \ldots \text{ } F_n \big\vert = r \big\vert G_1 \text{ } G_2 \text{ } \ldots \text{ } G_n \big\vert$ where $r \in R$ is a unit.
\label{reorderlem}
\end{lem}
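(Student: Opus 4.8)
The plan is to transport everything to the reordered graph $(G',\alpha)$ via the isomorphism $\sigma^{\star}$ of Lemma~\ref{reorder}, compare the two resulting bases there through Corollary~\ref{detcor3}, and then account for the reordering separately as a row permutation. First I would apply $\sigma^{\star}$ to each $F_i$. Since $\sigma^{\star} \colon R_{(G,\alpha)} \to R_{(G',\alpha)}$ is an $R$-module isomorphism by Lemma~\ref{reorder}, the image $\{\sigma^{\star}(F_1), \ldots, \sigma^{\star}(F_n)\}$ is again a basis, now for $R_{(G',\alpha)}$. Thus both $\{\sigma^{\star}(F_1), \ldots, \sigma^{\star}(F_n)\}$ and the given $\{G_1, \ldots, G_n\}$ are bases of the same module $R_{(G',\alpha)}$, so Corollary~\ref{detcor3} yields a unit $r \in R$ with $|\sigma^{\star}(F_1)\ \ldots\ \sigma^{\star}(F_n)| = r\,|G_1\ \ldots\ G_n|$.

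Next I would relate the determinant of the transported family to the original one. By the definition in Lemma~\ref{reorder}, $\sigma^{\star}$ sends $(f_1, \ldots, f_n)$ to $(f_{\sigma(1)}, \ldots, f_{\sigma(n)})$, so in matrix form the columns $\sigma^{\star}(F_i)$ arise from the columns $F_i$ by applying the single permutation $\sigma$ to the entries, equivalently by permuting the rows of the coefficient matrix. A permutation of the rows multiplies the determinant by $\operatorname{sgn}(\sigma) \in \{+1,-1\}$, which is a unit in the domain $R$, giving $|\sigma^{\star}(F_1)\ \ldots\ \sigma^{\star}(F_n)| = \operatorname{sgn}(\sigma)\,|F_1\ \ldots\ F_n|$. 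Combining the two relations produces $|F_1\ \ldots\ F_n| = \operatorname{sgn}(\sigma)\,r\,|G_1\ \ldots\ G_n|$, and $\operatorname{sgn}(\sigma)\,r$ is a unit as a product of units, which is exactly the asserted claim.

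The only place requiring care, and hence the main obstacle (though a mild one), is bookkeeping the ordering conventions. Since the entries of each column are listed from bottom to top, I must verify that reordering the spline components by $\sigma$ corresponds cleanly to one and the same row permutation applied uniformly across all columns, and that its effect on the determinant is precisely the global scalar $\operatorname{sgn}(\sigma)$ rather than any column-dependent factor. Once this identification is pinned down, the argument is a direct chaining of Lemma~\ref{reorder} with Corollary~\ref{detcor3}.
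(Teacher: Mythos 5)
Your proof is correct, and it takes a mildly different route from the paper's in how the two determinants are compared inside $R_{(G',\alpha)}$. You use the fact that $\sigma^{\star}$ is an $R$-module isomorphism (Lemma~\ref{reorder}), so it carries the basis $\{ F_1 , \ldots , F_n \}$ to a basis $\{ \sigma^{\star}(F_1) , \ldots , \sigma^{\star}(F_n) \}$ of $R_{(G',\alpha)}$, and then you apply Corollary~\ref{detcor3} once to these two bases of the same module; the row-permutation identity $\big\vert \sigma^{\star}(F_1) \ \ldots \ \sigma^{\star}(F_n) \big\vert = \operatorname{sgn}(\sigma) \big\vert F_1 \ \ldots \ F_n \big\vert$ finishes the argument. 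The paper instead never verifies that the transported family is a basis: it applies Proposition~\ref{detprop2} twice, once with $\sigma$ (treating $\{ \sigma(F_i) \}$ merely as a subset of $R_{(G',\alpha)}$, so that $\big\vert G_1 \ \ldots \ G_n \big\vert$ divides $\big\vert F_1 \ \ldots \ F_n \big\vert$) and once with $\sigma^{-1}$ (treating $\{ \sigma^{-1}(G_i) \}$ as a subset of $R_{(G,\alpha)}$, so that $\big\vert F_1 \ \ldots \ F_n \big\vert$ divides $\big\vert G_1 \ \ldots \ G_n \big\vert$), and concludes by mutual divisibility in a domain. The two routes rest on the same machinery, since Corollary~\ref{detcor3} is exactly Proposition~\ref{detprop2} applied in both directions; yours trades the paper's inverse-permutation transport of the $G_i$ for the module-theoretic fact that isomorphisms carry bases to bases, which is legitimate because Lemma~\ref{reorder} asserts the isomorphism. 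The ordering concern you flag is settled as you suspect: the same $\sigma$ acts on every column, so the matrix of the $\sigma^{\star}(F_i)$ is obtained from that of the $F_i$ by one fixed row permutation, and the resulting factor is the single global unit $\operatorname{sgn}(\sigma)$, not anything column-dependent.
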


\begin{proof} Reordering the vertices corresponds to replacing the rows of the spline matrix, and does not change the determinant. Hence
\begin{displaymath}
\big\vert F_1 \text{ } F_2 \text{ } \ldots \text{ } F_n \big\vert = \pm \big\vert \sigma(F_1) \text{ } \sigma(F_2) \text{ } \ldots \text{ } \sigma(F_n) \big\vert.
\end{displaymath}
Here $\{ \sigma(F_1) , \ldots , \sigma(F_n) \} \subset R_{(G',\alpha)}$ and since $\{ G_1 , \ldots , G_n \}$ is a basis for $R_{(G',\alpha)}$, $\big\vert G_1 \text{ } G_2 \text{ } \ldots \text{ } G_n \big\vert$ divides $\big\vert \sigma(F_1) \text{ } \sigma(F_2) \text{ } \ldots \text{ } \sigma(F_n) \big\vert = \big\vert F_1 \text{ } F_2 \text{ } \ldots \text{ } F_n \big\vert$ by Proposition~\ref{detprop2}.

Now consider $\sigma^{-1} \in \mathscr{S}_n$. Then
\begin{displaymath}
\big\vert G_1 \text{ } G_2 \text{ } \ldots \text{ } G_n \big\vert = \pm \big\vert \sigma^{-1} (G_1) \text{ } \sigma^{-1} (G_2) \text{ } \ldots \text{ } \sigma^{-1} (G_n) \big\vert
\end{displaymath}
as explained above. Also we have $\{ \sigma^{-1} (G_1) , \ldots , \sigma^{-1} (G_n) \} \subset R_{(G,\alpha)}$ and $\big\vert \sigma(F_1) \text{ } \sigma(F_2) \text{ } \ldots \text{ } \sigma(F_n) \big\vert$ divides $\big\vert \sigma^{-1} (G_1) \text{ } \sigma^{-1} (G_2) \text{ } \ldots \text{ } \sigma^{-1} (G_n) \big\vert = \big\vert G_1 \text{ } G_2 \text{ } \ldots \text{ } G_n \big\vert$ by Proposition~\ref{detprop2}. Hence we conclude that $\big\vert F_1 \text{ } F_2 \text{ } \ldots \text{ } F_n \big\vert = r \big\vert G_1 \text{ } G_2 \text{ } \ldots \text{ } G_n \big\vert$ where $r \in R$ is a unit.
\end{proof}

Throughout the rest of the paper we focus on to give basis criteria via determinant for spline modules  $R_{(G,\alpha)}$ on cycles, diamond graphs and trees. In order to do this, we define a crucial element $Q_G \in R$ by using zero trials, which are discussed in~\cite{Alt}. Let $(G,\alpha)$ be an edge labeled graph with $k$ vertices. Fix a vertex $v_i$ on $(G,\alpha)$ with $i \geq 2$. Label all vertices $v_j$ with $j < i$ by zero. By using the notations in~\cite{Alt}, we define $Q_G$ as
\begin{displaymath}
Q_G = \prod\limits_{i=2} ^k \Big[ \Big\{ \big( \textbf{p}_t ^{(i,0)} \big) \text{ $\big\vert$ } t = 1,\ldots, m_i \Big\} \Big]
\end{displaymath}
where $m_i$ is the number of the zero trials of $v_i$. The element $Q_G$ can be formularized in terms of edge labels on cycles, diamond graphs and trees. In general, this is not easy. Gjoni~\cite{Gjo} and Mahdavi~\cite{Mah} studied integer splines on cycles and diamond graphs respectively and they stated that a given set of splines forms a basis for $\mathbb{Z}_{(G,\alpha)}$ if and only if the determinant of the matrix whose columns are the elements of the given set is equal to a formula $Q$  given by edge labels. We will show that the formula $Q$ corresponds to $Q_G$ and generalize their statement.

\subsection{Determinant of Splines on Cycles}

In~\cite{Gjo}, Gjoni gave basis criteria for integer splines on cycles by using determinantal techniques. Gjoni used flow-up bases to prove Theorem~\ref{gjotheo}. In general his approach does not work since the existence of flow-up bases is not guaranteed when $R$ is not a PID. Such an example can be found in~\cite{Alt}. In this section we generalize Theorem~\ref{gjotheo} to any GCD domain. We give the statement of Gjoni below.

\begin{theo} \emph{\cite{Gjo}}
Fix the edge labels on $(C_n , \alpha)$. Let 
\begin{displaymath}
Q = \dfrac{l_1 l_2 \cdots l_n}{\big( l_1 , l_2 , \ldots , l_n \big)}.
\end{displaymath}
 and let $ F_1 , \ldots , F_n \in \mathbb{Z}_{(C_n , \alpha)}$ . Then $\{ F_1 , \ldots , F_n \}$ forms a module basis for $\mathbb{Z}_{(C_n , \alpha)}$ if and only if $\big\vert F_1 \text{ } F_2 \text{ } \ldots \text{ } F_n \big\vert = \pm Q$.
\label{gjotheo}
\end{theo}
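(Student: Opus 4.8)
The plan is to reduce both implications to a single determinant computation for one explicit basis. Since $\mathbb{Z}$ is a principal ideal domain, flow-up bases exist, so I would first invoke Bowden and the others~\cite{Bow} to obtain a flow-up basis $\{F^{(1)}, \ldots, F^{(n)}\}$ of $\mathbb{Z}_{(C_n,\alpha)}$ whose classes have the smallest possible leading entries $a_i = F^{(i)}_i$. Placing these as the columns of the spline matrix produces a matrix that is triangular up to reversing the order of the rows, because $F^{(i)}_j = 0$ for $j < i$; hence $\big\vert F^{(1)} \text{ } \ldots \text{ } F^{(n)} \big\vert = \pm \prod_{i=1}^n a_i$. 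Once this product is shown to equal $Q$, the forward implication is immediate: by Corollary~\ref{detcor3} the determinant of any basis is a unit multiple of $\pm Q$, and the only units of $\mathbb{Z}$ are $\pm 1$.

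The computational core is the identification of the leading entries. Fixing $i$ and imposing $f_1 = \cdots = f_{i-1} = 0$, the vertices $v_i, v_{i+1}, \ldots, v_n$ form a path whose two ends inherit boundary constraints from the zero block: the edge $v_{i-1}v_i$ forces $l_{i-1} \mid f_i$, while the wrap-around edge $v_n v_1$ forces $l_n \mid f_n$. Telescoping the interior congruences $f_j \equiv f_{j+1} \pmod{l_j}$ yields $f_i \equiv f_n \pmod{\gcd(l_i, \ldots, l_{n-1})}$, and a Chinese Remainder argument shows that a closing value $f_n$ with $l_n \mid f_n$ exists precisely when $\gcd(l_i, \ldots, l_n) \mid f_i$. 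Thus the smallest leading entry is $a_i = \operatorname{lcm}\!\big(l_{i-1}, \gcd(l_i, \ldots, l_n)\big)$ for $2 \le i \le n$, with $a_1 = 1$. It then remains to establish the arithmetic identity $\prod_{i=2}^n \operatorname{lcm}\!\big(l_{i-1}, \gcd(l_i, \ldots, l_n)\big) = \frac{l_1 \cdots l_n}{\gcd(l_1, \ldots, l_n)}$. I would prove it prime by prime: writing $\nu_j$ for the $p$-adic valuation of $l_j$ and $\mu_j = \min(\nu_j, \ldots, \nu_n)$, reindexing by $j = i-1$ turns each factor's valuation into $\max(\nu_j, \mu_{j+1}) = \nu_j + \mu_{j+1} - \mu_j$, and summing telescopes to $\sum_{j=1}^n \nu_j - \min_j \nu_j$, which is exactly the $p$-adic valuation of $Q$.

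For the converse, I would compare an arbitrary set $\{G_1, \ldots, G_n\} \subset \mathbb{Z}_{(C_n,\alpha)}$ against the flow-up basis just constructed. Expanding each $G_j$ in that basis yields an integer matrix $C$ with $\big[\,G_1 \text{ } \ldots \text{ } G_n\,\big] = \big[\,F^{(1)} \text{ } \ldots \text{ } F^{(n)}\,\big]\,C$, so multiplicativity of the determinant gives $\big\vert G_1 \text{ } \ldots \text{ } G_n \big\vert = (\pm Q)\,\det C$. The hypothesis $\big\vert G_1 \text{ } \ldots \text{ } G_n \big\vert = \pm Q$ then forces $\det C = \pm 1$, so $C$ is unimodular and invertible over $\mathbb{Z}$; consequently $\{G_1, \ldots, G_n\}$ generates the same module as the flow-up basis and is itself a basis.

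I expect the main obstacle to be the second paragraph: one must argue that the obstruction to closing the cycle is divisibility by the full $\gcd(l_i, \ldots, l_n)$ rather than merely by $l_{i-1}$, and then push the telescoping identity through. Both the forward implication (a one-line consequence of Corollary~\ref{detcor3}) and the converse (a formal unimodularity argument) are comparatively routine once a single basis is known to have determinant $\pm Q$.
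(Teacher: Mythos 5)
Your proposal is correct, but it takes the route of the cited source rather than the route of this paper. What you describe --- existence over the PID $\mathbb{Z}$ of a flow-up basis with smallest leading entries (via~\cite{Hand, Bow}), identification of those leading entries as $a_1 = 1$ and $a_i = \big[\, l_{i-1}\, ,\, (l_i , \ldots , l_n)\,\big]$ for $i \geq 2$, triangularity giving $\big\vert F^{(1)} \ \ldots \ F^{(n)} \big\vert = \pm \prod a_i$, and the two formal implications via Corollary~\ref{detcor3} and unimodularity of the change-of-basis matrix --- is essentially the flow-up-basis argument that the paper attributes to Gjoni and cites for Theorem~\ref{gjotheo} without reproducing. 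Your two technical ingredients are both sound: the path-plus-CRT computation showing that the admissible leading entries of the $i$-th flow-up class are exactly the multiples of $\big[\, l_{i-1}\, ,\, (l_i , \ldots , l_n)\,\big]$, and the $p$-adic telescoping identity, which agrees with the paper's gcd-cancellation proof that $Q_{C_n} = l_1 \cdots l_n / (l_1 , \ldots , l_n)$. The contrast worth noting is with the paper's proof of the generalization, Theorem~\ref{cycthm}: over an arbitrary GCD domain your very first step fails, since flow-up bases need not exist when $R$ is not a PID (see~\cite{Alt}). The paper therefore proves the direction ``basis implies determinant equals a unit times $Q_{C_n}$'' by constructing explicit matrices $A^{(i)}$ of splines with $\big\vert A^{(i)} \big\vert = Q_{C_n} \cdot {l'_i}^{\,n-1}$ and invoking Proposition~\ref{detprop2} to force the unknown factor $r$ to divide a family of elements whose gcd is $1$, and it proves the converse by a Cramer's-rule argument (as in Lemma~\ref{cycd331}) rather than by unimodularity. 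In short, your approach buys a short, explicit proof over $\mathbb{Z}$ (or any PID), realizing the basis determinant as a product of minimal leading entries, while the paper's machinery gives up that explicitness precisely so that the criterion extends to all GCD domains.
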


The following lemma shows that the formula $Q$ above is equal to $Q_{C_n}$:

\begin{lem} Let $(C_n , \alpha)$ be an edge labeled $n$-cycle. Then
\begin{displaymath}
Q_{C_n} = \dfrac{l_1 l_2 \cdots l_n}{\big( l_1 , l_2 , \ldots , l_n \big)}.
\end{displaymath}
\end{lem}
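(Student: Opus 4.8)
The plan is to unwind the definition of $Q_{C_n}$ in terms of zero trials directly on the $n$-cycle, compute the contribution of each vertex, and then reduce the statement to a purely multiplicative identity among greatest common divisors and least common multiples that holds in any GCD domain. Throughout I keep the paper's convention $(a,b)$ for a greatest common divisor and write $[a,b]$ for a least common multiple, so that the bracket $[\{\,\cdot\,\}]$ appearing in the definition of $Q_G$ is the generator of the intersection of the listed principal ideals, i.e. the lcm of the listed elements. With the cyclic labeling $l_{i-1} = \alpha(v_{i-1}v_i)$ for $2 \le i \le n$ and $l_n = \alpha(v_n v_1)$, the goal is to show that $v_i$ contributes the factor $\big[\, l_{i-1},\, (l_i, l_{i+1}, \ldots, l_n)\,\big]$ to the product defining $Q_{C_n}$, and that these factors telescope to $\tfrac{l_1 \cdots l_n}{(l_1, \ldots, l_n)}$.

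First I would determine the zero trials of a fixed vertex $v_i$ with $2 \le i \le n$. Setting $v_1 = \cdots = v_{i-1} = 0$ and contracting these vertices to a single zero node $Z$, the cycle presents exactly two edge-disjoint routes from $v_i$ to $Z$: the single edge $v_{i-1}v_i$ of label $l_{i-1}$, and the arc $v_i, v_{i+1}, \ldots, v_n, v_1$ whose edges carry $l_i, l_{i+1}, \ldots, l_n$. Following the construction of zero trials in~\cite{Alt}, the direct edge yields the trial product $\mathbf{p}^{(i,0)}_1 = l_{i-1}$, while along the arc the intermediate labels at $v_{i+1}, \ldots, v_n$ are free, so the value at $v_i$ is forced only into the ideal generated by $l_i, \ldots, l_n$ and the associated trial product is $\mathbf{p}^{(i,0)}_2 = (l_i, l_{i+1}, \ldots, l_n)$. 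Taking the bracket of these trials gives the $i$-th factor $\big[\, l_{i-1},\, (l_i, \ldots, l_n)\,\big]$; for $i = n$ the arc degenerates to the single edge $v_n v_1$ and this factor specializes correctly to $[\,l_{n-1}, l_n\,]$.

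It then remains to evaluate $Q_{C_n} = \prod_{i=2}^{n} \big[\, l_{i-1}, (l_i, \ldots, l_n)\,\big]$. Put $d_i = (l_i, l_{i+1}, \ldots, l_n)$, so that $d_n = l_n$, $d_1 = (l_1, \ldots, l_n)$, and $d_{i-1} = (l_{i-1}, d_i)$ by associativity of the gcd. Using the identity $[a,b]\,(a,b) = ab$, valid up to units in any GCD domain, each factor becomes $\big[\, l_{i-1}, d_i\,\big] = \dfrac{l_{i-1}\, d_i}{d_{i-1}}$, an honest element of $R$ since $d_{i-1} \mid l_{i-1} \mid l_{i-1} d_i$. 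Hence
\begin{displaymath}
Q_{C_n} = \prod_{i=2}^{n} \frac{l_{i-1}\, d_i}{d_{i-1}} = \big( l_1 l_2 \cdots l_{n-1} \big)\,\frac{d_2 d_3 \cdots d_n}{d_1 d_2 \cdots d_{n-1}} = l_1 l_2 \cdots l_{n-1} \cdot \frac{d_n}{d_1} = \frac{l_1 l_2 \cdots l_n}{(l_1, l_2, \ldots, l_n)},
\end{displaymath}
which is the asserted formula.

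The main obstacle I anticipate is the first step rather than the telescoping: one must read off from the definition in~\cite{Alt} that the wrap-around arc contributes the single trial product $(l_i, \ldots, l_n)$ — that is, that a path to the zero node through adjustable intermediate vertices constrains $v_i$ by the gcd of the path labels, whereas a direct edge contributes the label itself — and that the two trials combine through the bracket (lcm) rather than additively. Once the zero trials are pinned down, the remaining manipulations use only $[a,b]\,(a,b)=ab$ and the associativity of gcd, both legitimate in a GCD domain; in particular no appeal to prime factorization is made, which is essential since $R$ need not be a PID.
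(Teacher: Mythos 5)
Your proof is correct and follows essentially the same route as the paper: both express $Q_{C_n}$ as the product $\prod_{i=2}^{n}\bigl[\,l_{i-1},(l_i,\ldots,l_n)\bigr]$ and then telescope using $[a,b](a,b)=ab$ together with associativity of the gcd. The only difference is that you explicitly derive the per-vertex factors from the zero-trial definition (a step the paper's proof takes as read from~\cite{Alt}), which makes your write-up slightly more self-contained but does not change the argument.
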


\begin{proof}
\begin{displaymath}
\begin{array}{ccl}
Q_{C_n} &=& \big[ l_1 , (l_2 , \ldots , l_n) \big] \cdot \big[ l_2 , (l_3 , \ldots , l_n) \big] \cdots \big[ l_{n-2} , (l_{n-1} , l_n) \big] \cdot \big[ l_{n-1} , l_n \big] \vspace{.3cm} \\
&=& \dfrac{l_1 (l_2 , l_3 \ldots , l_n)}{(l_1 , l_2 , \ldots , l_n)} \cdot \dfrac{l_2 (l_3 , \ldots , l_n)}{(l_2 , l_3 , \ldots , l_n)} \cdots \dfrac{l_{n-2} (l_{n-1} , l_n)}{(l_{n-2} , l_{n-1} , l_n)} \cdot \big[ l_{n-1} , l_n \big] = \dfrac{l_1 l_2 \cdots l_n}{\big( l_1 , l_2 , \ldots , l_n \big)}.
\end{array}
\end{displaymath}
\end{proof}

In order to show that Theorem~\ref{gjotheo} holds also when the base ring $R$ is a GCD domain, we first need some lemmas.

\begin{lem} Let $(C_n , \alpha)$ be an edge labeled $n$-cycle. If we set $\hat{l_i} = l_1 \cdots l_{i-1} l_{i+1} \cdots l_n$, then $\hat{l_i}$ divides $\big\vert F_1 \text{ } F_2 \text{ } \ldots \text{ } F_n \big\vert$ for all $i = 1, \ldots , n$.
\label{cyclem}
\end{lem}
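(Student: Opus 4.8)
The plan is to exploit the fact that the determinant is unchanged (up to sign) under elementary row operations together with the divisibility built into the spline conditions. Write the defining matrix so that its $k$-th row $R_k = (f_{1k}, f_{2k}, \ldots, f_{nk})$ collects the values of all $n$ splines at vertex $v_k$; reordering rows only changes the sign of the determinant, so I may arrange the rows in any vertex order I like. The key observation is that each spline condition on the edge joining $v_k$ and $v_{k+1}$ (with label $l_k$) says $l_k \mid (f_{jk} - f_{j(k+1)})$ for every $j$, so the difference row $R_k - R_{k+1}$ has all of its entries divisible by $l_k$. The point, then, is to produce $n-1$ such difference rows whose associated edge labels are exactly $\{\, l_k : k \neq i \,\}$.

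For a fixed $i$, deleting the edge labeled $l_i$ from the cycle $C_n$ leaves a spanning path on the same vertex set whose $n-1$ edges carry precisely the labels $l_k$ with $k \neq i$. Order the vertices along this path as $w_0 - w_1 - \cdots - w_{n-1}$, and let $\lambda_t$ denote the label of the path edge joining $w_{t-1}$ and $w_t$, so that $\{\lambda_1, \ldots, \lambda_{n-1}\} = \{\, l_k : k \neq i \,\}$. First I would pass (by a row permutation, costing only a sign) to the matrix whose rows are $R_{w_0}, \ldots, R_{w_{n-1}}$. Then I would replace these by the rows
\begin{displaymath}
S_0 = R_{w_0}, \qquad S_t = R_{w_{t-1}} - R_{w_t} \quad (t = 1, \ldots, n-1).
\end{displaymath}
Expressed in terms of $R_{w_0}, \ldots, R_{w_{n-1}}$, this transformation is lower bidiagonal with $\pm 1$ on the diagonal, hence unimodular, so it changes the determinant only by a sign. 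By the spline condition each row $S_t$ with $t \geq 1$ has every entry divisible by $\lambda_t$, so factoring these out gives
\begin{displaymath}
\big\vert F_1 \text{ } F_2 \text{ } \ldots \text{ } F_n \big\vert = \pm \Big( \prod_{t=1}^{n-1} \lambda_t \Big) \det(M')
\end{displaymath}
with $M'$ a matrix over $R$. Since $\prod_{t=1}^{n-1} \lambda_t = \hat{l_i}$, this yields $\hat{l_i} \mid \big\vert F_1 \text{ } F_2 \text{ } \ldots \text{ } F_n \big\vert$, and the same argument applies to every $i$.

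The only real obstacle is bookkeeping: one must check carefully that removing edge $i$ from the cyclic arrangement indeed produces a path whose edge-label multiset is exactly the complement of $\{l_i\}$ (a cyclic-to-linear index shift), and that the chosen sequence of difference operations is genuinely a determinant-preserving unimodular change of rows rather than a sequence that accidentally annihilates the matrix. Organizing the operations along the path order, as above, makes the transformation triangular and settles both points at once. I note that one could instead try to reduce the general index $i$ to the case $i = n$ using the vertex-reordering results, but since $\{F_1, \ldots, F_n\}$ is not assumed to be a basis here, the reordering statement \ref{reorderlem} does not apply directly; the explicit row-operation argument avoids this issue entirely and requires no freeness hypothesis.
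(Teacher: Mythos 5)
Your proof is correct and is essentially the argument the paper relies on: the paper outsources this lemma to Lemma 5.1.1 of~\cite{Gjo}, but that argument (and the paper's own explicit computation in the analogous Lemma~\ref{33lem1} for $D_{3,3}$, later cited back to Lemma~\ref{cyclem} in the proof of Lemma~\ref{dmnlem}) is precisely this one: subtract rows of adjacent vertices along the path obtained by deleting the edge labeled $l_i$, use the spline conditions to see each difference row is divisible by the corresponding edge label, and factor these out of the determinant. Your additional care in checking that the row transformation is unimodular, and your remark that Lemma~\ref{reorderlem} cannot be invoked here since no basis hypothesis is available, are both sound.
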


\begin{proof} See Lemma 5.1.1. in~\cite{Gjo}.
\end{proof}

\begin{lem} Let $(C_n , \alpha)$ be an edge labeled $n$-cycle. Let $\{ F_1 , \ldots , F_n \} \subset R_{(C_n , \alpha)}$. Then $Q_{C_n}$ divides $\big\vert F_1 \text{ } F_2 \text{ } \ldots \text{ } F_n \big\vert$.
\label{cyclem1}
\end{lem}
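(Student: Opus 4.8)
The plan is to deduce the divisibility directly from Lemma~\ref{cyclem} together with the arithmetic of the GCD domain $R$, passing to the fraction field. Write $L = l_1 l_2 \cdots l_n$ and $g = (l_1, \ldots, l_n)$, so that $\hat{l_i} = L / l_i$ and, by the computation preceding this lemma, $Q_{C_n} = L / g$. Lemma~\ref{cyclem} tells us that each $\hat{l_i}$ divides $D := \big\vert F_1 \text{ } F_2 \text{ } \ldots \text{ } F_n \big\vert$; the goal is to upgrade these $n$ individual divisibilities into the single divisibility $Q_{C_n} \mid D$.

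First I would set $x = D / L \in \mathrm{Frac}(R)$. The hypothesis $\hat{l_i} \mid D$ is then equivalent to $x l_i = D l_i / L \in R$ for every $i$. Since $R$ is a GCD domain, I can write $x$ in lowest terms as $x = a / b$ with $(a, b) = 1$. Then $x l_i \in R$ means $b \mid a l_i$, and coprimality of $a$ and $b$ together with Euclid's lemma (valid in any GCD domain) forces $b \mid l_i$. As this holds for every $i$, we get $b \mid (l_1, \ldots, l_n) = g$, whence $g / b \in R$ and $x g = a (g / b) \in R$. This last membership is exactly the statement $D g / L \in R$, that is, $Q_{C_n} = L / g$ divides $D$.

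Conceptually this argument shows that $Q_{C_n} = L / g$ is the least common multiple of the family $\{ \hat{l_1}, \ldots, \hat{l_n} \}$: each $\hat{l_i}$ divides $L / g$ because $g \mid l_i$, and the fraction-field computation shows that $L / g$ divides every common multiple. One could alternatively organize the proof around this de Morgan-type duality between $\gcd$ and $\mathrm{lcm}$, invoking the universal property of the lcm in a GCD domain and concluding at once from $\hat{l_i} \mid D$ for all $i$. The main point requiring care is that $R$ need not be a UFD, so prime factorizations and $p$-adic valuations are unavailable; the argument must rest only on the defining GCD property, specifically the existence of a lowest-terms representation in $\mathrm{Frac}(R)$ and the form of Euclid's lemma $\big( (a, b) = 1 \text{ and } b \mid a c \big) \Rightarrow b \mid c$. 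Verifying or citing these GCD-domain facts is the one genuinely non-formal step.
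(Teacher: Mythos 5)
Your proof is correct, and it starts from the same place as the paper's (Lemma~\ref{cyclem} gives $\hat{l_i} \mid D$ for all $i$, where $D = \big\vert F_1 \text{ } F_2 \text{ } \ldots \text{ } F_n \big\vert$), but it finishes the argument by a genuinely different and more self-contained route. The paper's entire proof is one sentence: since each $\hat{l_i}$ divides $D$, the least common multiple $\big[ \hat{l_1} , \ldots , \hat{l_n} \big]$ divides $D$, and this lcm \emph{equals} $Q_{C_n}$. That sentence silently relies on two GCD-domain facts: the universal property of finite lcms (they divide every common multiple) and the duality identity $\big[ L/l_1 , \ldots , L/l_n \big] = L/(l_1 , \ldots , l_n)$ with $L = l_1 \cdots l_n$, which the paper asserts but never proves. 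Your fraction-field argument proves precisely the content of those facts from scratch: writing $x = D/L$ in lowest terms $a/b$, using Euclid's lemma to force $b \mid l_i$ for every $i$, hence $b \mid g = (l_1 , \ldots , l_n)$, and concluding $Q_{C_n} = L/g \mid D$. What the paper's route buys is brevity; what yours buys is that the only inputs are the existence of lowest-terms representations and Euclid's lemma, both standard consequences of the GCD axiom (e.g. of the identity $(ca,cb) = c(a,b)$), and, as you observe, your computation establishes the paper's unproved identity $\big[ \hat{l_1} , \ldots , \hat{l_n} \big] = Q_{C_n}$ as a byproduct. Your flagged caveat is exactly the right one, and it is satisfied, so the proof stands.
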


\begin{proof} Since $\hat{l_i}$ divides $\big\vert F_1 \text{ } F_2 \text{ } \ldots \text{ } F_n \big\vert$ for all $i = 1, \ldots , n$ by Lemma~\ref{cyclem}, $\big[ \hat{l_1} , \hat{l_2} , \ldots , \hat{l_n} \big] = Q_{C_n}$ also divides $\big\vert F_1 \text{ } F_2 \text{ } \ldots \text{ } F_n \big\vert$.
\end{proof}

We give the main theorem of this section below which is a generalization of Theorem~\ref{gjotheo} to any GCD domain. One direction of Theorem~\ref{cycthm} can be proved exactly the same as Theorem 5.1.7. in~\cite{Gjo}. Other way around does not work as in Theorem 5.1.7. in~\cite{Gjo}. We use different techniques to prove it.

\begin{theo} Let $(C_n , \alpha)$ be an edge labeled $n$-cycle. Let $\{ F_1 , \ldots , F_n \} \subset R_{(C_n , \alpha)}$. Then $\{ F_1 , \ldots , F_n \}$ forms a basis for $R_{(C_n , \alpha)}$ if and only if $\big\vert F_1 \text{ } F_2 \text{ } \ldots \text{ } F_n \big\vert = r \cdot Q_{C_n}$ where $r \in R$ is a unit.
\label{cycthm}
\end{theo}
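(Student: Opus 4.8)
The plan is to prove the two implications separately, relying on the divisibility facts already in hand (Proposition~\ref{detprop2} and Lemma~\ref{cyclem1}) together with Cramer's rule over the fraction field of $R$.

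For the forward implication, suppose $\{F_1, \ldots, F_n\}$ is a basis; here I would argue as Gjoni does. First construct flow-up classes $F^{(1)}, \ldots, F^{(n)}$ realizing the minimal zero-trial leading entries that define $Q_{C_n}$: set $F^{(1)} = (1, \ldots, 1)$ and, for $i \geq 2$, let $F^{(i)}$ be a flow-up class with leading entry $F^{(i)}_i = \big[ \{ \textbf{p}_t^{(i,0)} : t = 1, \ldots, m_i \} \big]$. Each such class exists, and since the matrix $\big[ F^{(1)} \cdots F^{(n)} \big]$ is triangular its determinant is the product of the leading entries, which by the definition of $Q_{C_n}$ equals $\pm Q_{C_n}$. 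As $\{F_1, \ldots, F_n\}$ is a basis, Proposition~\ref{detprop2} gives that $\big\vert F_1 \, \cdots \, F_n \big\vert$ divides $\big\vert F^{(1)} \, \cdots \, F^{(n)} \big\vert = \pm Q_{C_n}$, while Lemma~\ref{cyclem1} gives the reverse divisibility. Hence $\big\vert F_1 \, \cdots \, F_n \big\vert = r \cdot Q_{C_n}$ for some unit $r$.

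For the converse I cannot copy Gjoni, since over a GCD domain there need be no flow-up basis with which to run a change-of-basis computation; instead I would use Cramer's rule directly. Assume $\big\vert F_1 \, \cdots \, F_n \big\vert = r \cdot Q_{C_n}$ with $r$ a unit. Since the edge labels are nonzero, $Q_{C_n} \neq 0$, so the determinant is nonzero and $\{F_1, \ldots, F_n\}$ is linearly independent over the fraction field $K$, hence over $R$. For spanning, take any $H \in R_{(C_n, \alpha)}$ and write $H = \sum_{i=1}^n q_i F_i$ with $q_i \in K$; Cramer's rule yields
\begin{displaymath}
q_i = \dfrac{\big\vert F_1 \, \cdots \, F_{i-1} \, H \, F_{i+1} \, \cdots \, F_n \big\vert}{\big\vert F_1 \, \cdots \, F_n \big\vert} = \dfrac{\big\vert F_1 \, \cdots \, F_{i-1} \, H \, F_{i+1} \, \cdots \, F_n \big\vert}{r \cdot Q_{C_n}}.
\end{displaymath}
The numerator is a determinant of $n$ elements of $R_{(C_n, \alpha)}$, so Lemma~\ref{cyclem1} shows that $Q_{C_n}$ divides it; therefore $q_i \in R$ for every $i$, and $H$ is an $R$-linear combination of the $F_i$. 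Thus the $F_i$ both span and are linearly independent, so they form a basis.

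The main obstacle is precisely this converse: the PID-style argument through a flow-up basis is unavailable, and the point that makes the new argument go through is that every numerator appearing in Cramer's rule is itself the determinant of honest splines, so the universal divisibility $Q_{C_n} \mid \big\vert G_1 \, \cdots \, G_n \big\vert$ of Lemma~\ref{cyclem1} forces all the Cramer coefficients into $R$.
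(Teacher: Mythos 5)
Your converse direction is correct, and it is in fact the direction the paper simply imports: the Cramer's-rule computation, with Lemma~\ref{cyclem1} forcing every numerator to be divisible by $Q_{C_n}$, is how that implication is proved in~\cite{Gjo} (Theorem 5.1.7, cited verbatim by the paper) and is written out explicitly for the diamond graph in Lemma~\ref{cycd331}. The genuine gap is in your forward direction, at the sentence ``Each such class exists,'' and you have attached the PID caveat to the wrong implication: it is the forward direction, not the converse, where Gjoni's flow-up argument breaks over a general GCD domain. A flow-up class at $v_i$ on the cycle has leading entry lying in the ideal $(l_{i-1}) \cap \big( (l_i) + (l_{i+1}) + \cdots + (l_n) \big)$, whereas the zero-trial least common multiple $\big[ l_{i-1} , (l_i , \ldots , l_n) \big]$ generates the ideal $(l_{i-1}) \cap \big( (l_i , \ldots , l_n) \big)$, which is in general strictly larger; the two coincide when $R$ is a PID (or B\'ezout), which is exactly why the argument works over $\mathbb{Z}$ but not here. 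Concretely, take $R = k[x,y]$ and $(C_3 , \alpha)$ with $l_1 = \alpha(v_1 v_2) = x+y+1$, $l_2 = \alpha(v_2 v_3) = x$, $l_3 = \alpha(v_3 v_1) = y$. Your $F^{(2)}$ would need leading entry $\big[ l_1 , (l_2 , l_3) \big] = \big[ x+y+1 , 1 \big] = x+y+1$ up to a unit; but any flow-up class $(0, f_2, f_3)$ satisfies $f_2 - f_3 \in (x)$ and $f_3 \in (y)$, hence $f_2 \in (x) + (y)$, and no associate of $x+y+1$ lies in $(x)+(y)$ since every element of that ideal has zero constant term. So the required $F^{(2)}$ does not exist and the triangular matrix you want cannot be formed. (A set of $n$ splines with determinant an associate of $Q_{C_n}$ does exist a posteriori whenever a basis exists, namely the basis itself, but producing one is equivalent to what you are trying to prove, so it cannot be the starting point.)

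This failure is precisely what the paper's proof is engineered to avoid. Instead of one matrix of splines with determinant $\pm Q_{C_n}$, the paper writes $l_j = a\, l'_j$ with $a = (l_1 , \ldots , l_n)$ and constructs, for each $i$, a matrix $A^{(i)}$ whose columns are honest splines and whose determinant is $Q_{C_n} \cdot (l'_i)^{n-1}$; the extra factor $l'_i$ multiplied into every nonconstant column is exactly the price paid to push the entries into the relevant ideal sums, i.e.\ to make the columns genuine elements of $R_{(C_n , \alpha)}$ --- this is the ingredient your construction omits. Proposition~\ref{detprop2} then gives that $r \cdot Q_{C_n}$ divides each $Q_{C_n} (l'_i)^{n-1}$, so $r$ divides $\big( (l'_1)^{n-1} , \ldots , (l'_n)^{n-1} \big) = 1$, and $r$ is a unit. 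If you want to repair your write-up, keep the converse paragraph as is and replace the forward direction by this family-of-inflated-determinants argument; your flow-up construction is only legitimate under the additional hypothesis that $R$ is a PID (or at least B\'ezout), where $(l_i) + \cdots + (l_n) = \big( (l_i , \ldots , l_n) \big)$ and the minimal flow-up classes of~\cite{Alt} are available.
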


\begin{proof}  The proof of the second part of the theorem can be found in Theorem 5.1.7, ~\cite{Gjo}. For the proof of the first part, we assume that  $\{ F_1 , \ldots , F_n \}$ forms a basis for $R_{(C_n , \alpha)}$. Then the determinant $\big\vert F_1 \text{ } F_2 \text{ } \ldots \text{ } F_n \big\vert = r \cdot Q_{C_n}$ for some $r \in R$ by Lemma~\ref{cyclem1}. It suffices to prove that $r$ is a unit. Assume that $\big( l_1 , l_2 , \ldots , l_n \big) = a \neq 1$. Then $l_i = a \cdot l' _ i$ for all $i = 1, \ldots , n$ with $(l' _1 , \ldots , l' _n) = 1$. 

We construct matrices $A^{(i)} = \begin{bmatrix} A^{(i)}_0 & A^{(i)}_1 & \ldots & A^{(i)}_{n-1} \end{bmatrix}$ for all $i = 1, \ldots , n$ with columns $A^{(i)}_j$ where $j = 0, \ldots , n-1$. Let $\begin{bmatrix} A^{(i)}_j \end{bmatrix} _k$ denote the $k$-th entry of the column. Notice that the entries are ordered from bottom to top. Fix
\begin{displaymath}
A^{(i)}_0 = \begin{bmatrix} 1 \\ \vdots \\ 1 \end{bmatrix}
\end{displaymath}
for all $1 \leq i \leq n$. For a fixed $i$ with $1 \leq i \leq n$, define the entries of the columns $A^{(i)}_j$ for $j = 1, \ldots , n-1$ as follows:

\begin{itemize}
\item For $j < i$,
\begin{displaymath}
\begin{bmatrix} A^{(i)}_j \end{bmatrix} _k = 
\begin{cases} \big[ l_j , ( l_1 , \ldots , l_{j-1} , l_i , l_n ) \big] l' _i , & j < k \leq i \\
0, & \text{otherwise}.
\end{cases}
\end{displaymath}
\item For $j = i$,
\begin{displaymath}
\begin{bmatrix} A^{(i)}_j \end{bmatrix} _k = 
\begin{cases} \big[ l_i , l_n \big] l' _i, & 1 \leq k \leq i \\
0, & \text{otherwise}.
\end{cases}
\end{displaymath}
\item For $j > i$,
\begin{displaymath}
\begin{bmatrix} A^{(i)}_j \end{bmatrix} _k = 
\begin{cases} \big[ l_j , ( l_1 , \ldots , l_{j-1} , l_n ) \big] l' _i, & i < k \leq j \\
0, & \text{otherwise}.
\end{cases}
\end{displaymath}
\end{itemize}

\noindent
It is easy to see that each column of $A^{(i)}$ for $1 \leq i \leq n$ is a spline. In order to compute the determinant $\big\vert A^{(i)} \big\vert$ for $1 \leq i \leq  n$, we first obtain the following matrix by applying few column operations on $A^{(i)}$ if it is necessary:
\begin{displaymath}
A^{(i)} = 
\begin{bmatrix} A^{(i)}_0 & A^{(i)}_i & A^{(i)}_1 & A^{(i)}_2 & \ldots & A^{(i)}_{i-1} & A^{(i)}_{i+1} & \ldots & A^{(i)}_{n-1} \end{bmatrix}
\end{displaymath}
so that
\begin{displaymath}
\begin{array}{ccl}
\big\vert A^{(i)} \big\vert &=& \big[ l_i , l_n \big] \big[ l_1 , \big( l_i , l_n \big) \big] \big[ l_2 , \big( l_1 , l_i , l_n \big) \big] \cdots \big[ l_{n-1} , \big( l_1 , \ldots , l_{n-2} , l_n \big) \big] {l' _i}^{(n-1)} \\ &=& Q_{C_n} \cdot {l' _i}^{(n-1)}
\end{array}
\end{displaymath}
for all $i = 1, \ldots , n$. By Proposition~\ref{detprop2}, $ r \cdot Q_{C_n}=\big\vert F_1 \text{ } F_2 \text{ } \ldots \text{ } F_n \big\vert$ divides $\big\vert A^{(i)} \big\vert = {l' _i}^{(n-1)} \cdot Q_{C_n}$ and so $r$ divides ${l' _i}^{(n-1)}$ for all $i = 1, \ldots , n$. Then $r$ divides $\big( {l' _1}^{(n-1)} , \ldots , {l' _n}^{(n-1)} \big) = 1$. Hence $r$ is a unit.

If $\big( l_1 , l_2 , \ldots , l_n \big) = 1$, then there exists a coprime pair $l_i , l_j$. In this case we can construct matrices $A^{(i)}$ and $A^{(j)}$ such that $\big\vert A^{(i)} \big\vert = Q_{C_n} \cdot {l}^{(n-1)} _i$ and $\big\vert A^{(j)} \big\vert = Q_{C_n} \cdot {l}^{(n-1)} _j$. Hence by the same observation above we conclude that $r$ divides $({l}^{(n-1)} _i , {l}^{(n-1)} _j) = 1$ . Thus $r$ is a unit.
\end{proof}

The following example is an application of Theorem~\ref{cycthm}:

\begin{ex} Consider the edge labeled graph $(C_5 , \alpha)$. 

\begin{figure}[H]
\begin{center}
\scalebox{0.16}{\includegraphics{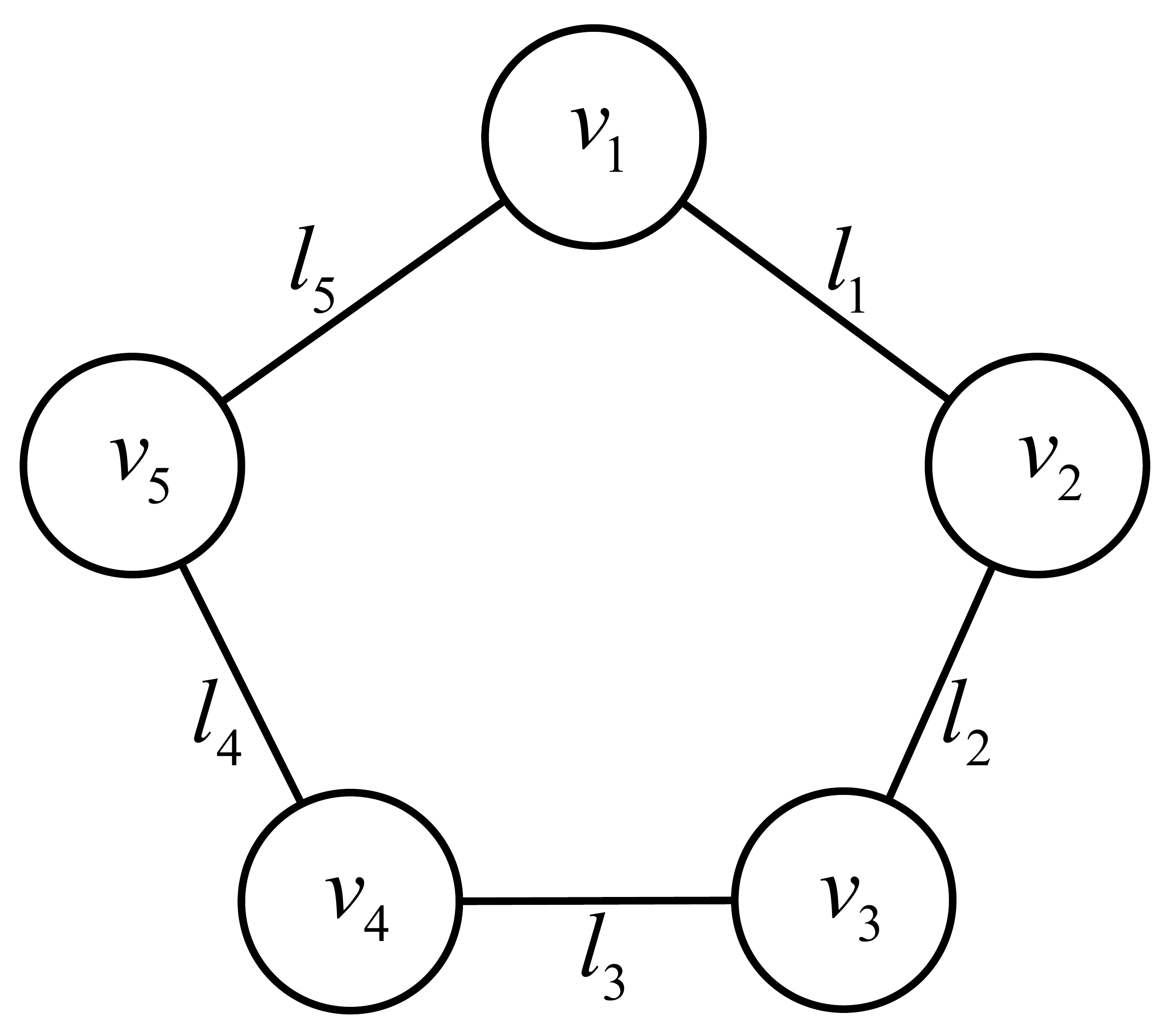}}
\caption{Edge labeled graph $(C_5 , \alpha)$}
\label{c5}
\end{center}
\end{figure}
\noindent
Let $\{ F_1 , \ldots , F_5 \}$ be a basis for $R_{(C_5 , \alpha)}$. By Lemma~\ref{cyclem1}, $\big\vert F_1 \text{ } F_2 \text{ } \ldots \text{ } F_5 \big\vert = r \cdot Q_{C_5}$ for some $r \in R$ where $Q_{C_5} = \dfrac{l_1 \cdots l_5}{(l_1 , \ldots l_5)}$. Assume that $(l_1 , \ldots , l_5) = a$. Then $l_i = a \cdot l' _i$ for all $i = 1, \ldots , 5$ and $(l' _1 , \ldots , l' _5) = 1$. Consider the following matrices:
\begin{displaymath}
A^{(1)} = \begin{bmatrix} 
1 & 0 & 0 & 0 & 0 \\
1 & 0 & 0 & 0 & \big[ l_4 , \big( l_1 , l_2 , l_3 , l_5 \big) \big] l' _1 \\
1 & 0 & 0 & \big[ l_3 , \big( l_1 , l_2 , l_5 \big) \big] l' _1 & \big[ l_4 , \big( l_1 , l_2 , l_3 , l_5 \big) \big] l' _1 \\
1 & 0 & \big[ l_2 , \big( l_1 , l_5 \big) \big] l' _1 & \big[ l_3 , \big( l_1 , l_2 , l_5 \big) \big] l' _1 & \big[ l_4 , \big( l_1 , l_2 , l_3 , l_5 \big) \big] l' _1 \\
1 & \big[ l_1 , l_5 \big] l' _1 & 0 & 0 & 0
\end{bmatrix},
\end{displaymath}
\begin{displaymath}
A^{(2)} = \begin{bmatrix} 
1 & 0 & 0 & 0 & 0 \\
1 & 0 & 0 & 0 & \big[ l_4 , \big( l_1 , l_2 , l_3 , l_5 \big) \big] l' _2 \\
1 & 0 & 0 & \big[ l_3 , \big( l_1 , l_2 , l_5 \big) \big] l' _2 & \big[ l_4 , \big( l_1 , l_2 , l_3 , l_5 \big) \big] l' _2 \\
1 & \big[ l_1 , \big( l_2 , l_5 \big) \big] l' _2 & \big[ l_2 , l_5 \big] l' _2 & 0 & 0 \\
1 & 0 & \big[ l_2 , l_5 \big] l' _2 & 0 & 0 \\
\end{bmatrix},
\end{displaymath}
\begin{displaymath}
A^{(3)} = \begin{bmatrix} 
1 & 0 & 0 & 0 & 0 \\
1 & 0 & 0 & 0 & \big[ l_4 , \big( l_1 , l_2 , l_3 , l_5 \big) \big] l' _3 \\
1 & \big[ l_1 , \big( l_3 , l_5 \big) \big] l' _3 & \big[ l_2 , \big( l_1 , l_3 , l_5 \big) \big] l' _3 & \big[ l_3 , l_5 \big] l' _3 & 0 \\
1 & \big[ l_1 , \big( l_3 , l_5 \big) \big] l' _3 & 0 & \big[ l_3 , l_5 \big] l' _3 & 0 \\
1 & 0 & 0 & \big[ l_3 , l_5 \big] l' _3 & 0 \\
\end{bmatrix},
\end{displaymath}
\begin{displaymath}
A^{(4)} = \begin{bmatrix} 
1 & 0 & 0 & 0 & 0 \\
1 & \big[ l_1 , \big( l_4 , l_5 \big) \big] l' _4 & \big[ l_2 , \big( l_1 , l_4 , l_5 \big) \big] l' _4 & \big[ l_3 , \big( l_1 , l_2 , l_4 , l_5 \big) \big] l' _4 & \big[ l_4 , l_5 \big] l' _4 \\
1 & \big[ l_1 , \big( l_4 , l_5 \big) \big] l' _4 & \big[ l_2 , \big( l_1 , l_4 , l_5 \big) \big] l' _4 & 0 & \big[ l_4 , l_5 \big] l' _4 \\
1 & \big[ l_1 , \big( l_4 , l_5 \big) \big] l' _4 & 0 & 0 & \big[ l_4 , l_5 \big] l' _4 \\
1 & 0 & 0 & 0 & \big[ l_4 , l_5 \big] l' _4 \\
\end{bmatrix},
\end{displaymath}
\begin{displaymath}
A^{(5)} = \begin{bmatrix} 
1 & \big[ l_1 , l_5 \big] l' _5 & \big[ l_2 , \big( l_1 , l_5 \big) \big] l' _5 & \big[ l_3 , \big( l_1 , l_2 ,  l_5 \big) \big] l' _5 & \big[ l_4 , \big( l_1 , l_2 , l_3 , l_5 \big) \big] l' _5 \\
1 & \big[ l_1 , l_5 \big] l' _5 & \big[ l_2 , \big( l_1 , l_5 \big) \big] l' _5 & \big[ l_3 , \big( l_1 , l_2 , l_5 \big) \big] l' _5 & 0 \\
1 & \big[ l_1 , l_5 \big] l' _5 & \big[ l_2 , \big( l_1 , l_5 \big) \big] l' _5 & 0 & 0 \\
1 & \big[ l_1 , l_5 \big] l' _5 & 0 & 0 & 0 \\
1 & 0 & 0 & 0 & 0 \\
\end{bmatrix}.
\end{displaymath}
Each column of $A^{(i)} $ is an element of $R_{(C_5 , \alpha)}$. By Proposition~\ref{detprop2}, $\big\vert F_1 \text{ } F_2 \text{ } \ldots \text{ } F_5 \big\vert = r \cdot Q_{C_5}$ divides $\big\vert A^{(i)} \big\vert = {l'_i}^4 \cdot Q_{C_5}$. Hence $r$ divides ${l'_i}^4$ and so $r$ divides $\big( {l'_1}^4  , \ldots , {l'_5}^4 \big) = 1$. Thus $r$ is a unit.
\end{ex}

\subsection{\normalsize{Determinant of Splines on Diamond Graph $D_{3,3}$}} \label{secd33}

Diamond graph $D_{m,n}$ is obtained by gluing two cycles $C_m$ and $C_n$ along a common edge. The following figure illustrates the diamond graph $D_{3,3}$:

\begin{figure}[H]
\begin{center}
\scalebox{0.16}{\includegraphics{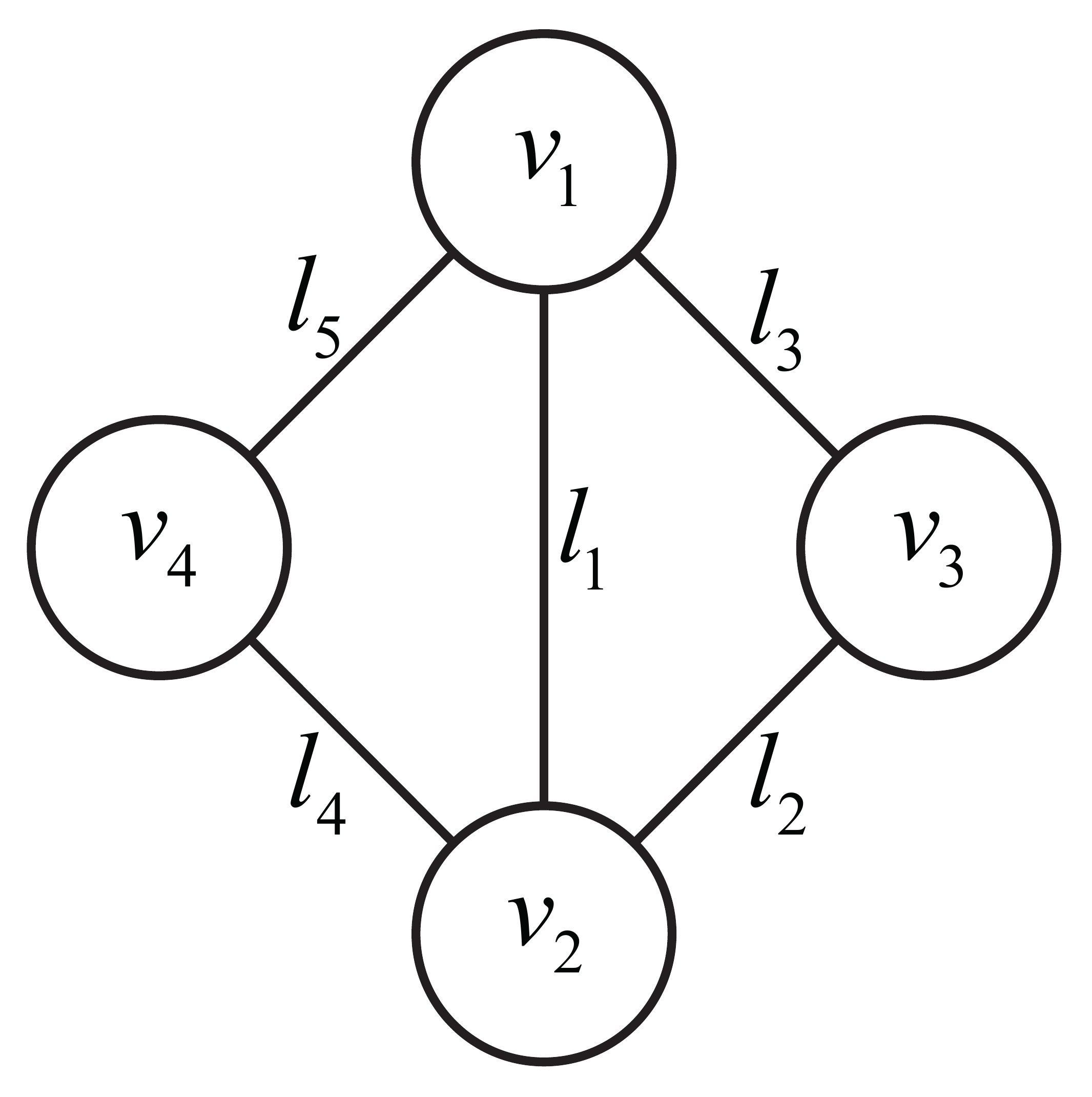}}
\caption{Edge labeled diamond graph $(D_{3,3} , \alpha)$}
\label{d33}
\end{center}
\end{figure}

In~\cite{Mah}, Mahdavi tried to give basis criteria for $\mathbb{Z}_{(D_{3,3}, \alpha)}$ with Rose. They proved a similar result as Lemma~\ref{cyclem1} for diamond graphs under some conditions. The statement is given below.

\begin{lem} \emph{\cite{Mah}} Fix the edges on $(D_{3,3} , \alpha)$. Let $(l_2 , l_3 , l_4 , l_5) = (l_1 , l_2) = (l_1 , l_3) = (l_1 , l_4) = (l_1 , l_5) = 1$, and $Q = \dfrac{l_1 l_2 l_3 l_4 l_5}{\big( (l_2 , l_3) (l_4 , l_5) , l_1 (l_2 , l_3 , l_4 , l_5) \big)}$. If $W,X,Y,Z \in \mathbb{Z}_{(D_{3,3}, \alpha)}$, then $Q$ divides $\big\vert W \text{ } X \text{ } Y \text{ } Z \big\vert$.
\label{mahlem}
\end{lem}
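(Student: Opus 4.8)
The plan is to reduce the statement to two facts: first, that for every spanning tree $T$ of $D_{3,3}$ the product of the edge labels along $T$ divides the determinant $\big\vert W \text{ } X \text{ } Y \text{ } Z \big\vert$; and second, that under the stated hypotheses the least common multiple of these spanning-tree products is exactly $Q$. Before starting, I would record that the hypotheses collapse $Q$ to the full product $l_1 l_2 l_3 l_4 l_5$: since $(l_2,l_3,l_4,l_5)=1$ the term $l_1 (l_2,l_3,l_4,l_5)$ equals $l_1$, and since $l_1$ is coprime to each $l_i$ it is coprime to $(l_2,l_3)$ and to $(l_4,l_5)$ (e.g. $(l_1,(l_2,l_3))$ divides $(l_1,l_2)=1$), hence to their product. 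Therefore the denominator $\big( (l_2,l_3)(l_4,l_5),\, l_1(l_2,l_3,l_4,l_5) \big)$ is a unit and $Q = l_1 l_2 l_3 l_4 l_5$.

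For the first fact I would prove a general statement that also recovers Lemma~\ref{cyclem}: for any edge-labeled graph on $n$ vertices, any $n$ splines $F_1,\ldots,F_n$, and any spanning tree $T$, the product $\prod_{e\in T} l_e$ divides $\big\vert F_1 \text{ } \ldots \text{ } F_n \big\vert$. Root $T$ at a vertex and order the vertices so that each vertex $u$ comes after its parent $p(u)$. Replacing each non-root row $R_u$ by $R_u - R_{p(u)}$ is a unit-determinant (unipotent, lower-triangular) row operation, so it leaves the determinant unchanged; and by the spline condition on the tree edge joining $u$ to $p(u)$, every entry of the new row is divisible by that edge's label. Factoring one label out of each of the $n-1$ non-root rows exhibits $\prod_{e\in T} l_e$ as a divisor of the determinant. (For a cycle, deleting the edge $l_i$ leaves the path with product $\hat{l_i}$, so this specializes to Lemma~\ref{cyclem}.)

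Applying this to $D_{3,3}$, with $l_1$ the shared edge and the two triangles carrying $\{l_1,l_2,l_3\}$ and $\{l_1,l_4,l_5\}$, I would single out the four spanning trees avoiding the shared edge, whose label products are $l_2 l_3 l_4$, $l_2 l_3 l_5$, $l_2 l_4 l_5$, $l_3 l_4 l_5$, together with one tree through $l_1$, say with product $l_1 l_2 l_4$. Each divides $\big\vert W \text{ } X \text{ } Y \text{ } Z \big\vert$. The crux is then the lcm computation. Writing $M = l_2 l_3 l_4 l_5$ and using $\big[ M/a,\, M/b \big] = M/(a,b)$, I get $\big[ l_2 l_3 l_4,\, l_2 l_3 l_5 \big] = M/(l_4,l_5)$ and $\big[ l_2 l_4 l_5,\, l_3 l_4 l_5 \big] = M/(l_2,l_3)$, so the lcm of the four products is $M \big/ \big( (l_2,l_3),(l_4,l_5) \big)$. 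Here I would use that $\big( (l_2,l_3),(l_4,l_5) \big)$ divides all four of $l_2,l_3,l_4,l_5$, hence divides $(l_2,l_3,l_4,l_5)=1$ and is a unit; thus the lcm is exactly $l_2 l_3 l_4 l_5$. Consequently $l_2 l_3 l_4 l_5$ divides the determinant, while $l_1 l_2 l_4$ does too, so $l_1$ divides it; since $l_1$ is coprime to $l_2 l_3 l_4 l_5$, the product $l_1 l_2 l_3 l_4 l_5 = Q$ divides $\big\vert W \text{ } X \text{ } Y \text{ } Z \big\vert$.

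I expect the main obstacle to be the lcm bookkeeping of the last paragraph: assembling the three-label spanning-tree products into the full five-label $Q$ rests on two number-theoretic reductions — that $(l_2,l_3,l_4,l_5)=1$ forces $\big( (l_2,l_3),(l_4,l_5) \big)=1$, and that the coprimality of $l_1$ lets the factors $l_1$ and $l_2 l_3 l_4 l_5$ be multiplied — and both must be carried out with gcd/lcm identities valid in an arbitrary GCD domain rather than by prime factorization, since $R$ need not admit unique factorization.
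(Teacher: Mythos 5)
Your proposal is correct, and its engine is the same as the paper's: exhibit products of edge labels as divisors of the determinant via row operations, then assemble them by an lcm computation. The differences are in packaging and in what gets proved. Where you prove one clean general fact --- for any spanning tree $T$ of any edge-labeled graph the product $\prod_{e\in T} l_e$ divides the determinant, via the unipotent row operations $R_u \mapsto R_u - R_{p(u)}$ --- the paper's Lemma~\ref{33lem1} obtains the same eight products (which are exactly the spanning-tree products of $D_{3,3}$) in two pieces: the four outer-cycle products by citing Lemma~\ref{cyclem}, and the ones containing $l_1$ by ad hoc row reduction on the $4\times 4$ matrix; your lemma unifies this case split and is reusable (the paper in effect redoes the argument for $D_{m,n}$ in Lemma~\ref{dmnlem}). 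The second difference cuts the other way: you invoke the coprimality hypotheses early, both to collapse $Q$ to $l_1 l_2 l_3 l_4 l_5$ and to shortcut the lcm bookkeeping, so your argument proves exactly the conditional statement and no more; the paper instead ignores the hypotheses, proves the unconditional lcm identity $\bigl[\, l_1 [l_2,l_3][l_4,l_5],\ l_2 l_3 l_4 l_5/(l_2,l_3,l_4,l_5) \,\bigr] = Q_{D_{3,3}}$ (Lemma~\ref{33lem2}), and thereby gets the stronger divisibility statement with no conditions over any GCD domain (Corollary~\ref{d33cor}), which is the version actually needed later for the basis criterion of Theorem~\ref{d33theo2}. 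So your proof is a valid, arguably cleaner proof of the lemma as stated; combining your spanning-tree lemma with the paper's Lemma~\ref{33lem2} would recover the stronger unconditional result with little extra work.
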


They also used the existence of flow-up bases for diamond graphs to prove Lemma~\ref{mahlem}.  Their proof does not work in general, especially if $R$ is not PID. They gave the following conjecture for $\mathbb{Z}_{(D_{3,3}, \alpha)}$:

\begin{conj} \emph{\cite{Mah}}
Fix the edge labels on $(D_{3,3} , \alpha)$. Let 
\begin{displaymath}
Q = \dfrac{l_1 l_2 l_3 l_4 l_5}{\big( (l_2 , l_3) (l_4 , l_5) , l_1 (l_2 , l_3 , l_4 , l_5) \big)}
\end{displaymath}
and let $W,X,Y,Z \in \mathbb{Z}_{(D_{3,3}, \alpha)}$. If $\big\vert W \text{ } X \text{ } Y \text{ } Z \big\vert = \pm Q$, then $\{ W,X,Y,Z \}$ forms a basis for $\mathbb{Z}_{(D_{3,3}, \alpha)}$.
\label{conjec}
\end{conj}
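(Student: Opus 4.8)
The plan is to reduce the conjecture to a single change-of-basis computation, exactly as in the cycle case, after first upgrading Mahdavi's divisibility lemma to hold unconditionally. The guiding observation is purely formal: in an integral domain, if a product $\det(P)\cdot u$ equals a unit, then $\det(P)$ is itself a unit. Consequently, for this direction (determinant $\Rightarrow$ basis) I will \emph{not} need to prove that a basis has determinant exactly a unit times $Q$; it suffices to know that $Q$ divides the determinant of one fixed basis.

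First I would identify $Q$ with $Q_{D_{3,3}}$ by running the zero-trial computation of Section~\ref{determinant} on the diamond, just as the preceding lemma identifies $Q_{C_n}$. Writing $D_{3,3}$ as two triangles glued along the common edge $l_1$, with outer labels $l_2, l_3$ on one triangle and $l_4, l_5$ on the other, I expect the zero trials at each vertex to split according to the two triangles and, after taking least common multiples and telescoping, to produce exactly the denominator $\big((l_2,l_3)(l_4,l_5),\, l_1(l_2,l_3,l_4,l_5)\big)$. Next I would strengthen Lemma~\ref{mahlem} by removing its coprimality hypotheses, proving that $Q_{D_{3,3}}$ divides $\big\vert W\ X\ Y\ Z \big\vert$ for \emph{all} splines $W,X,Y,Z \in \mathbb{Z}_{(D_{3,3},\alpha)}$. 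As with Lemma~\ref{cyclem} and Lemma~\ref{cyclem1}, the idea is to exhibit enough partial products of edge labels that each divide the determinant and then take their least common multiple; here the two-cycle structure forces me to combine the divisibilities arising from both triangles simultaneously.

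With unconditional divisibility available, the argument closes formally. Since $\mathbb{Z}$ is a PID, $\mathbb{Z}_{(D_{3,3},\alpha)}$ is free of rank $4$ and even admits a flow-up basis $\{B_1,B_2,B_3,B_4\}$ by~\cite{Alt}. By the divisibility step, $\big\vert B_1\ B_2\ B_3\ B_4 \big\vert = u\cdot Q$ for some $u \in \mathbb{Z}$. Given $W,X,Y,Z \in \mathbb{Z}_{(D_{3,3},\alpha)}$ with $\big\vert W\ X\ Y\ Z \big\vert = \pm Q$, I would write each of $W,X,Y,Z$ in the basis $\{B_i\}$ and let $P$ be the resulting $4\times 4$ integer change-of-coordinate matrix, so that $\big\vert W\ X\ Y\ Z \big\vert = \det(P)\cdot \big\vert B_1\ B_2\ B_3\ B_4 \big\vert = \det(P)\,u\,Q$. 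Cancelling the nonzero factor $Q$ gives $\det(P)\,u = \pm 1$, hence $\det(P)$ is a unit, so $P$ is invertible over $\mathbb{Z}$ and $\{W,X,Y,Z\}$ is a basis. This closing computation is insensitive to the ring, so the same reasoning proves the statement over an arbitrary GCD domain once freeness of $R_{(D_{3,3},\alpha)}$ is known.

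The hard part will be the divisibility step: confirming $Q = Q_{D_{3,3}}$ and, above all, removing Mahdavi's coprimality assumptions. Unlike a single cycle, where each $\hat{l_i}$ divides the determinant and their least common multiple is already $Q_{C_n}$, the diamond requires me to track contributions from both glued triangles, and the interaction between the shared edge $l_1$ and the four outer labels is precisely what yields the nonobvious denominator $\big((l_2,l_3)(l_4,l_5),\, l_1(l_2,l_3,l_4,l_5)\big)$. I expect to establish it either by a careful column reduction of the generic spline determinant or by the zero-trial bookkeeping of Section~\ref{determinant}; once it holds, the change-of-basis step is immediate and transfers verbatim to any GCD domain.
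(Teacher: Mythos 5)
Your proposal is correct, but the way you close the argument differs genuinely from the paper. The divisibility half of your plan coincides with the paper's: the identification $Q = Q_{D_{3,3}}$ is Lemma~\ref{33lemm}, and the unconditional strengthening of Mahdavi's lemma is exactly Corollary~\ref{d33cor}, proved as you predict by exhibiting partial products of edge labels that divide the determinant (the eight triples $l_1l_2l_4, \ldots, l_3l_4l_5$ of Lemma~\ref{33lem1}, obtained by row operations that use both triangles) and computing that their least common multiple equals $Q_{D_{3,3}}$ (Lemma~\ref{33lem2}). Where you diverge is the final step. You invoke freeness of $\mathbb{Z}_{(D_{3,3},\alpha)}$ and a flow-up basis $\{B_1,B_2,B_3,B_4\}$ (available over the PID $\mathbb{Z}$ by~\cite{Alt}), write $\big\vert B_1 \text{ } B_2 \text{ } B_3 \text{ } B_4 \big\vert = u\,Q$, and deduce from $\pm Q = \det(P)\,u\,Q$ that the change-of-basis matrix $P$ is unimodular; this is sound over $\mathbb{Z}$, and your initial observation that only the divisibility statement (not the converse direction) is needed here is also right. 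The paper's Lemma~\ref{cycd331} instead proves spanning directly, with no freeness input: Cramer's rule gives $Q_{D_{3,3}}\cdot R^4 \subseteq \big\langle F_1 , F_2 , F_3 , F_4 \big\rangle$, and then, for an arbitrary spline $F$, writing $Q_{D_{3,3}} F = \sum r_i F_i$ and substituting this combination into the determinant shows, by Corollary~\ref{d33cor} applied once more, that $Q_{D_{3,3}}$ divides every $r_i$, so $F \in \big\langle F_1 , F_2 , F_3 , F_4 \big\rangle$. The trade-off is clear: your route is shorter and more elementary over $\mathbb{Z}$, but it consumes freeness as a hypothesis, so it transfers to a general GCD domain only if freeness of $R_{(D_{3,3},\alpha)}$ is known beforehand; the paper's route needs no such input and therefore yields the full GCD-domain statement (Theorem~\ref{d33theo2}), with freeness of the spline module arriving as a byproduct of the argument rather than as an assumption.
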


The following lemma states that the formula $Q$ above is equal to $Q_{D_{3,3}}$:

\begin{lem} Let $D_{3,3}$ be as in Figure~\ref{d33}. Then
\begin{displaymath}
Q_{D_{3,3}} = \dfrac{l_1 l_2 l_3 l_4 l_5}{\big( (l_2 , l_3) (l_4 , l_5) , l_1 (l_2 , l_3 , l_4 , l_5) \big)}.
\end{displaymath}
\label{33lemm}
\end{lem}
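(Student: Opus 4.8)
The plan is to compute $Q_{D_{3,3}}$ directly from its definition via zero trials and then simplify the resulting product of least common multiples, exactly in the spirit of the preceding lemma for cycles. I label the four vertices of $D_{3,3}$ so that $v_1$ and $v_2$ are the two endpoints of the shared edge $l_1$ (the two degree-three vertices), while $v_3$ (incident to $l_2, l_3$) and $v_4$ (incident to $l_4, l_5$) are the two degree-two vertices. Recall from the cycle computation that a zero trial of $v_i$ is a simple path from $v_i$ to the set $\{v_1, \ldots, v_{i-1}\}$ of already-zeroed vertices, and that $(\mathbf{p}_t^{(i,0)})$ is the gcd of the edge labels along that path; the factor contributed by $v_i$ is the lcm of these gcds over all trials. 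So I would first read off the trials at $v_2, v_3, v_4$.

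For $v_4$ (with $v_1, v_2, v_3$ set to zero) the only paths are its two incident edges, giving the factor $[l_4, l_5]$; likewise $v_3$ contributes $[l_2, l_3]$. For $v_2$ (with $v_1 = 0$) there are exactly three simple paths to $v_1$: the shared edge itself ($l_1$), the path through $v_3$ (gcd $(l_2,l_3)$), and the path through $v_4$ (gcd $(l_4,l_5)$); since $v_3$ and $v_4$ are non-adjacent there is no further path. Hence the factor at $v_2$ is $[l_1, (l_2,l_3), (l_4,l_5)]$, and
\begin{displaymath}
Q_{D_{3,3}} = \big[ l_1, (l_2,l_3), (l_4,l_5) \big] \cdot \big[ l_2, l_3 \big] \cdot \big[ l_4, l_5 \big].
\end{displaymath}

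Setting $p = (l_2,l_3)$ and $q = (l_4,l_5)$, I would simplify using $[l_2,l_3] = l_2 l_3/p$ and $[l_4,l_5] = l_4 l_5/q$ together with the three-term identity
\begin{displaymath}
\big[ l_1, p, q \big] = \dfrac{l_1 \, p \, q}{\big( p q, \, l_1 (p,q) \big)},
\end{displaymath}
which follows from $[l_1, p, q] = [l_1, [p,q]] = l_1 [p,q]/(l_1, [p,q])$, from $[p,q] = pq/(p,q)$, and from the distributive law $(da, db) = d(a,b)$ applied with $d = (p,q)$ to rewrite $(p,q)\,\big(l_1, pq/(p,q)\big) = \big(pq, l_1(p,q)\big)$. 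Since $(p,q) = (l_2,l_3,l_4,l_5)$, substituting back yields
\begin{displaymath}
Q_{D_{3,3}} = \dfrac{l_1 \, p \, q}{(pq, \, l_1(p,q))} \cdot \dfrac{l_2 l_3}{p} \cdot \dfrac{l_4 l_5}{q} = \dfrac{l_1 l_2 l_3 l_4 l_5}{\big( (l_2,l_3)(l_4,l_5), \, l_1 (l_2,l_3,l_4,l_5) \big)},
\end{displaymath}
as claimed.

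The final cancellation is routine; the step needing care is the three-term lcm identity above, where one must apply $(da, db) = d(a,b)$ in the GCD domain rather than rely on PID-specific facts about unique factorization. A secondary point is the enumeration of zero trials at $v_2$: one must check that the definition in~\cite{Alt} produces precisely these three paths, so the factor is exactly $[l_1, (l_2,l_3), (l_4,l_5)]$ and not an lcm over spurious or missing trials. Once the vertex ordering is fixed so that the degree-two vertices $v_3, v_4$ carry the pairs $(l_2,l_3)$ and $(l_4,l_5)$, the computation is forced and matches Mahdavi's formula $Q$.
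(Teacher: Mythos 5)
Your proposal is correct and takes essentially the same approach as the paper: both read off $Q_{D_{3,3}} = \big[ l_1 , (l_2,l_3) , (l_4,l_5) \big] \cdot [l_2,l_3] \cdot [l_4,l_5]$ from the zero-trial definition and then simplify with standard GCD-domain identities to reach $\dfrac{l_1 l_2 l_3 l_4 l_5}{\big( (l_2,l_3)(l_4,l_5),\, l_1 (l_2,l_3,l_4,l_5) \big)}$. The only differences are cosmetic: you spell out the enumeration of zero trials (which the paper leaves implicit) and expand the three-term lcm via $[l_1,[p,q]]$ plus the distributive law $(da,db)=d(a,b)$, whereas the paper invokes the direct identity $[a,b,c] = abc/(ab,ac,bc)$ and then collapses $(l_1(l_2,l_3), l_1(l_4,l_5))$ to $l_1(l_2,l_3,l_4,l_5)$ --- both yield the same denominator.
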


\begin{proof}
\begin{displaymath}
\begin{array}{ccl}
Q_{D_{3,3}} &=& \big[ l_1 , (l_2 , l_3) , (l_4 , l_5) \big] \cdot [l_2 , l_3] \cdot [l_4 , l_5] \vspace{.3cm} \\ 
&=& \dfrac{l_1 (l_2 , l_3) (l_4 , l_5)}{(l_1 (l_2,l_3) , l_1 (l_4 , l_5) , (l_2 , l_3)(l_4 , l_5))} \cdot [l_2 , l_3] \cdot [l_4 , l_5] = \dfrac{l_1 l_2 l_3 l_4 l_5}{\big( (l_2 , l_3) (l_4 , l_5) , l_1 (l_2 , l_3 , l_4 , l_5) \big)}.
\end{array}
\end{displaymath}
\end{proof}

In this section, we first prove Lemma~\ref{mahlem} without any condition. Then we give the proof of the Conjecture~\ref{conjec} for any GCD domain. In order to do this, we need some lemmas.

\begin{lem} Let $D_{3,3}$ be as in Figure~\ref{d33} and let $\{ F_1 , F_2 ,F_3 , F_4 \} \subset R_{(D_{3,3} , \alpha)}$. Then the products $l_1 l_2 l_4$ , $l_1 l_2 l_5$ , $l_1 l_3 l_4$ , $l_1 l_3 l_5$ , $l_2 l_3 l_4$ , $l_2 l_3 l_5$ , $l_2 l_4 l_5$ and $l_3 l_4 l_5$ divide $\big\vert F_1 \text{ } F_2 \text{ } F_3 \text{ } F_4 \big\vert$.
\label{33lem1}
\end{lem}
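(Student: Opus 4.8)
The plan is to recognize the eight products listed in the statement as exactly the products of edge labels over the spanning trees of $D_{3,3}$, and then to prove the general fact that for each spanning tree the product of its edge labels divides the spline determinant. Recall that $D_{3,3}$ is two triangles glued along the common edge $l_1$, with $\{l_2,l_3\}$ and $\{l_4,l_5\}$ the remaining edges of the two faces; the two triangles are therefore $\{l_1,l_2,l_3\}$ and $\{l_1,l_4,l_5\}$. Since $D_{3,3}$ has $4$ vertices and $5$ edges, any $3$-element subset of its edges is acyclic, hence a spanning tree, unless it is one of these two triangles. Thus the two absent triples $l_1l_2l_3$ and $l_1l_4l_5$ are precisely the triangular ones, and the eight products in the statement are exactly the spanning-tree products. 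This is the diamond analogue of Lemma~\ref{cyclem}, where deleting one edge of a cycle leaves the spanning tree whose label product is $\hat{l_i}$.

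To establish the spanning-tree divisibility, I would fix a spanning tree $T$ and choose one of its vertices as a root $v_r$. Each remaining vertex $v$ then has a unique neighbour $\pi(v)$ on the tree-path toward $v_r$, joined to $v$ by a tree edge whose label I denote $l_v$. Viewing $F_1,F_2,F_3,F_4$ as the columns of the spline matrix, I would perform, for every non-root vertex $v$, the row operation $R_v \to R_v - R_{\pi(v)}$. Because $v$ and $\pi(v)$ are adjacent, the spline condition gives $f_{j,v}-f_{j,\pi(v)} \in (l_v)$ for every column $j$, so the resulting row $v$ is entrywise divisible by $l_v$. Factoring out these three labels, one from each non-root row, shows that $\prod_{v\neq v_r} l_v$, namely the product of the three edge labels of $T$, divides $\big\vert F_1 \text{ } F_2 \text{ } F_3 \text{ } F_4 \big\vert$.

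The one point requiring care is the \emph{order} of the row operations: the operation $R_v \to R_v - R_{\pi(v)}$ yields a row divisible by $l_v$ only if the subtracted parent row is still in its original form. I would therefore carry out the operations in a traversal of $T$ from the leaves toward the root (post-order), so that when the operation at $v$ is performed neither row $v$ nor row $\pi(v)$ has yet been modified; each such elementary operation leaves the determinant unchanged and installs exactly one factor $l_v$. Since a tree always admits such an ordering, this presents no real difficulty. Applying the argument to each of the eight spanning trees then yields all eight divisibilities.

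The genuinely substantive content is the combinatorial observation that the eight listed products are exactly the spanning-tree products, together with the reason the two triangular triples must be excluded: on a $3$-cycle only $\mathrm{lcm}$-type combinations, not the full product $l_1l_2l_3$ or $l_1l_4l_5$, are forced to divide. Everything else is the bookkeeping of the post-order row reduction, which is routine once the ordering subtlety is respected, and which reduces cleanly to the adjacency-driven divisibility used already in Lemma~\ref{cyclem}.
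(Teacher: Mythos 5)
Your proof is correct, but it is built on a different and more general key lemma than the one the paper uses. The paper's own proof splits the eight products into two groups: the four not involving $l_1$ (namely $l_2l_3l_4$, $l_2l_3l_5$, $l_2l_4l_5$, $l_3l_4l_5$) are obtained by applying the cycle result (Lemma~\ref{cyclem}) to the outer $4$-cycle with edges $l_2,l_3,l_4,l_5$, while for the four containing $l_1$ it performs explicit row operations on the $4\times 4$ determinant, carried out in detail for $l_1l_2l_5$ and asserted to be similar for the rest. Your single spanning-tree lemma subsumes both halves: the paper's manipulation for $l_1l_2l_5$ is exactly your post-order reduction for the tree with edges $\{l_1,l_2,l_5\}$ rooted at $v_2$, and Lemma~\ref{cyclem} itself is the special case in which the spanning tree is the path obtained by deleting one edge of a cycle. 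What your route buys is, first, a conceptual explanation of why precisely these eight triples appear --- they are the spanning trees of $D_{3,3}$, equivalently the $3$-subsets of edges containing neither triangle, a fact the paper only records informally after the proof as ``products of three edge labels whose corresponding edges do not form a subcycle''; and, second, an argument that transfers verbatim to $D_{m,n}$ (where the paper must reprove divisibility for the products $p_i$ and $q_{j,k}$ in Lemma~\ref{dmnlem}) and indeed to arbitrary graphs. You also make explicit the leaves-to-root ordering required for each elementary operation to install exactly one factor per tree edge, a subtlety hidden in the paper's phrase ``by some suitable row operations.'' What the paper's route buys is brevity: it reuses the already-cited Lemma~\ref{cyclem} for half of the cases instead of reproving it inside a general lemma.
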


\begin{proof} Since the corresponding edges to $l_2 , l_3 , l_4$ and $l_5$ form an outer cycle on $(D_{3,3} , \alpha)$, we conclude that $l_2 l_3 l_4$ , $l_2 l_3 l_5$ , $l_2 l_4 l_5$ and $l_3 l_4 l_5$ divides $\big\vert F_1 \text{ } F_2 \text{ } F_3 \text{ } F_4 \big\vert$ by Lemma~\ref{cyclem}. In order to see that $l_1 l_2 l_5$ divides $\big\vert F_1 \text{ } F_2 \text{ } F_3 \text{ } F_4 \big\vert$, where $F_i = (f_{i1} , f_{i2} , f_{i3} , f_{i4}) \in R_{(D_{3,3} , \alpha)}$ for $i = 1,2,3,4$, we consider the determinant
\begin{displaymath}
\big\vert F_1 \text{ } F_2 \text{ } F_3 \text{ } F_4 \big\vert = \begin{vmatrix} f_{14} & f_{24} & f_{34} & f_{44} \\ f_{13} & f_{23} & f_{33} & f_{43} \\ f_{12} & f_{22} & f_{32} & f_{42} \\ f_{11} & f_{21} & f_{31} & f_{41} \\ \end{vmatrix}.
\end{displaymath} By some suitable row operations on the determinant, we obtain 

\begin{displaymath}
\begin{array}{ccl}
\big\vert F_1 \text{ } F_2 \text{ } F_3 \text{ } F_4 \big\vert &=& \begin{vmatrix} f_{14} & f_{24} & f_{34} & f_{44} \\ f_{13} & f_{23} & f_{33} & f_{43} \\ f_{12} & f_{22} & f_{32} & f_{42} \\ f_{11} & f_{21} & f_{31} & f_{41} \\ \end{vmatrix} = \begin{vmatrix} f_{14} - f_{11} & f_{24} - f_{21} & f_{34} - f_{31} & f_{44} - f_{41} \\ f_{13} -f_{12} & f_{23} - f_{22} & f_{33} - f_{32} & f_{43} - f_{42} \\ f_{12} & f_{22} & f_{32} & f_{42} \\ f_{11} - f_{12} & f_{21} - f_{22} & f_{31} - f_{32} & f_{41} - f_{42} \end{vmatrix} \vspace{.3cm} \\ 
&=& \begin{vmatrix} x_{14} l_5 & x_{24} l_5 & x_{34} l_5 & x_{44} l_5 \\ x_{13} l_2 & x_{23} l_2 & x_{33} l_2 & x_{43} l_2 \\ f_{12} & f_{22} & f_{32} & f_{42} \\ x_{11} l_1 & x_{21} l_1 & x_{31} l_1 & x_{41} l_1 \end{vmatrix} = l_1 l_2 l_5 \underbrace{\begin{vmatrix} x_{14} & x_{24} & x_{34} & x_{44} \\ x_{13} & x_{23} & x_{33} & x_{43} \\ f_{12} & f_{22} & f_{32} & f_{42} \\ x_{11} & x_{21} & x_{31} & x_{41} \end{vmatrix}}_{\in R}.
\end{array}
\end{displaymath} for some $x_{ij}$.
Hence we see that $l_1 l_2 l_5$ divides $\big\vert F_1 \text{ } F_2 \text{ } F_3 \text{ } F_4 \big\vert$. Similarly, one can also show that the other products divide the determinant.
\end{proof}

As we see in Lemma~\ref{33lem1}, the products of three edge labels whose corresponding edges do not form a subcycle in $D_{3,3}$ divide the determinant $\big\vert F_1 \text{ } F_2 \text{ } F_3 \text{ } F_4 \big\vert$. We generalize the statement of this lemma to diamond graph $D_{m,n}$ later (see Lemma~\ref{dmnlem}). The following statement is a corollary of Lemma~\ref{33lem1}:

\begin{cor} Let $D_{3,3}$ be as in Figure~\ref{d33} and let $\{ F_1 , F_2 ,F_3 , F_4 \} \subset R_{(D_{3,3} , \alpha)}$. Then
\begin{displaymath}
\Big[ l_1 l_2 l_4 , l_1 l_2 l_5 , l_1 l_3 l_4 , l_1 l_3 l_5 , l_2 l_3 l_4 , l_2 l_3 l_5 , l_2 l_4 l_5 , l_3 l_4 l_5 \Big] = \left[ l_1 \cdot [l_2 , l_3] \cdot [l_4 , l_5] \text{ , } \dfrac{l_2 l_3 l_4 l_5}{(l_2 , l_3 , l_4 , l_5)} \right]
\end{displaymath}
divides $\big\vert F_1 \text{ } F_2 \text{ } F_3 \text{ } F_4 \big\vert$.
\end{cor}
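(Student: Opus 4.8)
The plan is to separate the two assertions. The stated equality of least common multiples is a purely arithmetic identity in the GCD domain $R$, and once it is in hand the divisibility is immediate: by Lemma~\ref{33lem1} each of the eight monomials $l_1 l_2 l_4, l_1 l_2 l_5, \ldots, l_3 l_4 l_5$ divides the single element $\big\vert F_1 \text{ } F_2 \text{ } F_3 \text{ } F_4 \big\vert$, and in a GCD domain the least common multiple of finitely many elements that all divide a common element $d$ again divides $d$. Hence the whole content of the corollary reduces to verifying the identity
\begin{displaymath}
\Big[ l_1 l_2 l_4 , l_1 l_2 l_5 , l_1 l_3 l_4 , l_1 l_3 l_5 , l_2 l_3 l_4 , l_2 l_3 l_5 , l_2 l_4 l_5 , l_3 l_4 l_5 \Big] = \left[ l_1 [l_2 , l_3][l_4 , l_5] \text{ , } \dfrac{l_2 l_3 l_4 l_5}{(l_2 , l_3 , l_4 , l_5)} \right].
\end{displaymath}

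To prove this I would split the eight products into the four that contain $l_1$ and the four that omit it, and use that the total least common multiple is the least common multiple of the two group-wise least common multiples. For the first group I would factor $l_1$ out via $[l_1 a_1 , \ldots , l_1 a_4] = l_1 [a_1 , \ldots , a_4]$, reducing to $[l_2 l_4 , l_2 l_5 , l_3 l_4 , l_3 l_5]$. Applying the standard GCD-domain rule $[xz , yz] = z [x,y]$ twice gives $[l_2 l_4 , l_2 l_5] = l_2 [l_4 , l_5]$ and $[l_3 l_4 , l_3 l_5] = l_3 [l_4 , l_5]$, and then $\big[ l_2 [l_4 , l_5] , l_3 [l_4 , l_5] \big] = [l_2 , l_3][l_4 , l_5]$. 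Thus the least common multiple of the first group is $l_1 [l_2 , l_3][l_4 , l_5]$, the first entry on the right.

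For the second group, the products $l_2 l_3 l_4 , l_2 l_3 l_5 , l_2 l_4 l_5 , l_3 l_4 l_5$ are precisely the ``leave-one-out'' products $\hat{l_i}$ of the $4$-cycle carried by the edges labeled $l_2 , l_3 , l_4 , l_5$ (this is the outer cycle of $D_{3,3}$). The computation of $Q_{C_n}$ from the cycle subsection, namely $[\hat{l_1} , \ldots , \hat{l_n}] = Q_{C_n} = \frac{l_1 \cdots l_n}{(l_1 , \ldots , l_n)}$ used in Lemma~\ref{cyclem1}, applied with $n = 4$ yields $[l_2 l_3 l_4 , l_2 l_3 l_5 , l_2 l_4 l_5 , l_3 l_4 l_5] = \frac{l_2 l_3 l_4 l_5}{(l_2 , l_3 , l_4 , l_5)}$, the second entry on the right. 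Taking the least common multiple of the two group results gives exactly the displayed right-hand side, and divisibility of the determinant then follows as noted above.

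I do not expect a genuine obstacle here; the argument is elementary once organized this way. The only point requiring care is that in a general GCD domain, as opposed to $\mathbb{Z}$ or a unique factorization domain, least common multiples and greatest common divisors are defined only up to units, so each equality above is an equality of associates. Since the corollary is a divisibility statement, this is harmless, and I would state once at the outset that all GCD and LCM expressions are understood up to units.
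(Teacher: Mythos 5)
Your proposal is correct and takes essentially the same route as the paper: the paper states this corollary without a written proof, as an immediate consequence of Lemma~\ref{33lem1} together with the fact that the least common multiple of elements dividing the determinant again divides the determinant, which is precisely your divisibility step. Your verification of the LCM identity --- pulling $l_1$ out of the four products containing it to obtain $l_1 [l_2 , l_3][l_4 , l_5]$, and recognizing the remaining four as the leave-one-out products of the outer $4$-cycle so that their least common multiple is $l_2 l_3 l_4 l_5 / (l_2 , l_3 , l_4 , l_5)$ --- fills in exactly the computation the paper leaves implicit (and tacitly uses in the proof of Lemma~\ref{cyclem1}), with every step valid up to units in a GCD domain.
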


The following lemma shows that we can consider $Q_{D_{3,3}}$ as the least common multiple of the products that defined above:

\begin{lem}
$Q_{D_{3,3}} = \left[ l_1 \cdot [l_2 , l_3] \cdot [l_4 , l_5] \text{ , } \dfrac{l_2 l_3 l_4 l_5}{(l_2 , l_3 , l_4 , l_5)} \right]$.
\label{33lem2}
\end{lem}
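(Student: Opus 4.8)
The plan is to avoid reopening the combinatorial definition of $Q_{D_{3,3}}$ and instead to lean on the closed form already computed in Lemma~\ref{33lemm}, namely
\[
Q_{D_{3,3}} = \frac{l_1 l_2 l_3 l_4 l_5}{\big( (l_2,l_3)(l_4,l_5), \, l_1 (l_2,l_3,l_4,l_5) \big)}.
\]
With this in hand, the claim becomes the purely arithmetic gcd--lcm identity
\[
\Big[ \, l_1 [l_2,l_3][l_4,l_5] \, , \, \tfrac{l_2 l_3 l_4 l_5}{(l_2,l_3,l_4,l_5)} \, \Big] = \frac{l_1 l_2 l_3 l_4 l_5}{\big( (l_2,l_3)(l_4,l_5), \, l_1 (l_2,l_3,l_4,l_5) \big)},
\]
which I would verify by extracting a common factor from the two arguments of the outer least common multiple.

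First I would introduce $K = \dfrac{l_2 l_3 l_4 l_5}{(l_2,l_3)(l_4,l_5)(l_2,l_3,l_4,l_5)}$ and check that $K \in R$. This is the one step that is not purely formal, since in a GCD domain one cannot fall back on factorization into primes; instead it follows from the divisibility chain $(l_2,l_3,l_4,l_5) \mid (l_2,l_3) \mid [l_2,l_3] \mid [l_2,l_3][l_4,l_5] = \tfrac{l_2 l_3 l_4 l_5}{(l_2,l_3)(l_4,l_5)}$, so that the remaining factor $(l_2,l_3,l_4,l_5)$ divides $\tfrac{l_2 l_3 l_4 l_5}{(l_2,l_3)(l_4,l_5)}$. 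A direct computation then gives $l_1[l_2,l_3][l_4,l_5] = K \cdot l_1 (l_2,l_3,l_4,l_5)$ and $\tfrac{l_2 l_3 l_4 l_5}{(l_2,l_3,l_4,l_5)} = K \cdot (l_2,l_3)(l_4,l_5)$, both up to units.

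Using the homogeneity $[cx,cy] = c\,[x,y]$ together with the relation $[x,y](x,y) = xy$, valid up to units in any GCD domain, I would then pull $K$ out of the lcm:
\[
\Big[ \, l_1 [l_2,l_3][l_4,l_5] \, , \, \tfrac{l_2 l_3 l_4 l_5}{(l_2,l_3,l_4,l_5)} \, \Big] = K \cdot \frac{l_1 (l_2,l_3,l_4,l_5)(l_2,l_3)(l_4,l_5)}{\big( l_1 (l_2,l_3,l_4,l_5), \, (l_2,l_3)(l_4,l_5) \big)}.
\]
Since $K \cdot (l_2,l_3,l_4,l_5)(l_2,l_3)(l_4,l_5) = l_2 l_3 l_4 l_5$ by the definition of $K$, the right-hand side collapses to $\dfrac{l_1 l_2 l_3 l_4 l_5}{\big( l_1 (l_2,l_3,l_4,l_5), (l_2,l_3)(l_4,l_5) \big)}$, which is exactly $Q_{D_{3,3}}$ by Lemma~\ref{33lemm}, finishing the proof. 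As a consistency check I would also verify the identity valuation by valuation: writing $a,b,c,d,e$ for the exponents of $l_1,\dots,l_5$ and replacing $(\cdot,\cdot)$ by $\min$, $[\cdot,\cdot]$ by $\max$ and products by sums, both sides reduce to $\max\big(a+\max(b,c)+\max(d,e),\, b+c+d+e-\min(b,c,d,e)\big)$, which is legitimate because the group of divisibility of a GCD domain is a lattice-ordered abelian group. The only real obstacle is the bookkeeping of divisibility in a non-PID setting, in particular justifying $K \in R$ and that each intermediate quotient remains in $R$, rather than any conceptual difficulty.
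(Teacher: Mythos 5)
Your proof is correct and follows essentially the same route as the paper's: both reduce the claim to a purely arithmetic gcd--lcm identity anchored on the closed form of $Q_{D_{3,3}}$ from Lemma~\ref{33lemm}, and verify it using $[x,y](x,y)=xy$ together with pulling a common factor through gcd/lcm. Your explicit extraction of the common factor $K$ (with the divisibility-chain check that $K \in R$) is a slightly more careful reorganization of the paper's computation, which instead combines the fractions first and factors $l_2 l_3 l_4 l_5$ out of the gcd in the denominator at the end.
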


\begin{proof}
\begin{displaymath}
\begin{array}{ccl}
\left[ l_1 \cdot [l_2 , l_3] \cdot [l_4 , l_5] \text{ , } \dfrac{l_2 l_3 l_4 l_5}{(l_2 , l_3 , l_4 , l_5)} \right] &=& \left[ l_1 \dfrac{l_2 l_3}{(l_2 , l_3)} \dfrac{l_4 l_5}{(l_4 , l_5)} \text{ , } \dfrac{l_2 l_3 l_4 l_5}{(l_2 , l_3 , l_4 , l_5)} \right]\vspace{.3cm} \\ &=& \dfrac{\dfrac{l_1 l_2 l_3 l_4 l_5}{(l_2 , l_3)(l_4 , l_5)} \cdot \dfrac{l_2 l_3 l_4 l_5}{(l_2 , l_3 , l_4 , l_5)}}{\left( \dfrac{l_1 l_2 l_3 l_4 l_5}{(l_2 , l_3)(l_4 , l_5)} , \dfrac{l_2 l_3 l_4 l_5}{(l_2 , l_3 , l_4 , l_5)} \right)}\vspace{.3cm} \\ &=&  \dfrac{l_1 l_2 l_3 l_4 l_5 \cdot l_2 l_3 l_4 l_5}{(l_2 , l_3)(l_4 , l_5)(l_2 , l_3 , l_4 , l_5) \left( \tfrac{l_1 l_2 l_3 l_4 l_5}{(l_2 , l_3)(l_4 , l_5)} , \tfrac{l_2 l_3 l_4 l_5}{(l_2 , l_3 , l_4 , l_5)} \right)} \vspace{.3cm} \\ &=& \dfrac{l_1 l_2 l_3 l_4 l_5 \cdot l_2 l_3 l_4 l_5}{\Big( l_1 l_2 l_3 l_4 l_5 (l_2 , l_3 , l_4 , l_5) \text{ , } l_2 l_3 l_4 l_5 (l_2 , l_3)(l_4 , l_5) \Big)}\vspace{.3cm} \\ &=& \dfrac{l_1 l_2 l_3 l_4 l_5}{\big( (l_2 , l_3) (l_4 , l_5) , l_1 (l_2 , l_3 , l_4 , l_5) \big)} = Q_{D_{3,3}}.
\end{array}
\end{displaymath}
\end{proof}

We state Lemma~\ref{mahlem} without any condition for any GCD domain as a corollary below.  Together with Lemma~\ref{33lem1} and ~\ref{33lem2}, the proof follows easily.
\begin{cor} Let $(D_{3,3} , \alpha)$ be an edge labeled diamond graph as in Figure~\ref{d33} and let $\{ F_1 , F_2 ,F_3 , F_4 \} \subset R_{(D_{3,3} , \alpha)}$. Then $Q_{D_{3,3}}$ divides $\big\vert F_1 \text{ } F_2 \text{ } F_3 \text{ } F_4 \big\vert$.
\label{d33cor}
\end{cor}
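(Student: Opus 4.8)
The plan is to obtain Corollary~\ref{d33cor} as an essentially immediate consequence of the two preceding results, Lemma~\ref{33lem1} and Lemma~\ref{33lem2}, combined with the defining property of least common multiples in a GCD domain. The guiding observation is that in a GCD domain the least common multiple of a finite family of elements exists and divides \emph{any} element that is simultaneously divisible by every member of the family. So once each of the eight triple products is known to divide the determinant, their least common multiple must divide it as well, and it only remains to recognize that least common multiple as $Q_{D_{3,3}}$.

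Concretely, I would proceed in three short steps. First, invoke Lemma~\ref{33lem1}, which guarantees that each of the products $l_1 l_2 l_4$, $l_1 l_2 l_5$, $l_1 l_3 l_4$, $l_1 l_3 l_5$, $l_2 l_3 l_4$, $l_2 l_3 l_5$, $l_2 l_4 l_5$, $l_3 l_4 l_5$ divides $\big\vert F_1 \text{ } F_2 \text{ } F_3 \text{ } F_4 \big\vert$. Second, since $R$ is a GCD domain these eight elements admit a least common multiple, and because the determinant is a common multiple of all of them, the least common multiple divides $\big\vert F_1 \text{ } F_2 \text{ } F_3 \text{ } F_4 \big\vert$; this is exactly the content of the intermediate corollary, which moreover regroups that least common multiple into the compact form $\big[\, l_1 \cdot [l_2,l_3]\cdot[l_4,l_5] \,,\, l_2 l_3 l_4 l_5/(l_2,l_3,l_4,l_5) \,\big]$. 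Third, apply Lemma~\ref{33lem2}, which identifies this expression precisely with $Q_{D_{3,3}}$. Chaining the three steps yields that $Q_{D_{3,3}}$ divides $\big\vert F_1 \text{ } F_2 \text{ } F_3 \text{ } F_4 \big\vert$, as claimed.

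I do not expect a genuine obstacle here, since all the real work has been discharged by the supporting lemmas: the delicate part is the row-reduction argument of Lemma~\ref{33lem1} that extracts the edge-label factors from the determinant (and the symmetric verifications for the remaining products), together with the gcd/lcm manipulation of Lemma~\ref{33lem2} that collapses the eight-fold least common multiple to the closed form for $Q_{D_{3,3}}$. For the corollary itself the only point worth stating explicitly is the least-common-multiple divisibility principle, namely that in a GCD domain $a_1,\dots,a_k \mid b$ implies $[a_1,\dots,a_k]\mid b$; this follows at once from the fact that the least common multiple generates the intersection of the principal ideals $(a_1),\dots,(a_k)$, so no condition of the type imposed in Lemma~\ref{mahlem} is needed. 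Hence the proof is a two-line appeal to Lemmas~\ref{33lem1} and~\ref{33lem2}.
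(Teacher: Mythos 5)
Your proposal is correct and follows exactly the paper's own route: the paper also derives Corollary~\ref{d33cor} by combining Lemma~\ref{33lem1} (each triple product divides the determinant), the intermediate corollary that their least common multiple therefore divides it, and Lemma~\ref{33lem2} identifying that least common multiple with $Q_{D_{3,3}}$. Your explicit remark that in a GCD domain $a_1,\dots,a_k \mid b$ implies $[a_1,\dots,a_k] \mid b$ is the only detail the paper leaves implicit, and it is stated correctly.
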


We start to prove Conjecture~\ref{conjec} for any GCD domain.

\begin{lem} Let $(D_{3,3} , \alpha)$ be an edge labeled diamond graph as in Figure~\ref{d33}. Let $\{ F_1 , F_2 , F_3 , F_4 \} \subset R_{(D_{3,3} , \alpha)}$. If $\big\vert F_1 \text{ } F_2 \text{ } F_3 \text{ } F_4 \big\vert = r \cdot Q_{D_{3,3}}$ where $r \in R$ is a unit, then $\{ F_1 , F_2 , F_3 , F_4 \}$ forms a basis for $R_{(D_{3,3} , \alpha)}$.
\label{cycd331}
\end{lem}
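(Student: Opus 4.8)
The plan is to argue by Cramer's rule, exploiting the fact, established in Corollary~\ref{d33cor}, that $Q_{D_{3,3}}$ is a universal divisor of every $4\times 4$ determinant built from splines. First I would note that since $\big\vert F_1 \text{ } F_2 \text{ } F_3 \text{ } F_4 \big\vert = r\cdot Q_{D_{3,3}} \neq 0$, the columns $F_1,F_2,F_3,F_4$ are linearly independent over the fraction field $\mathbb{F} = \mathrm{Frac}(R)$. Because $R$ is a domain, the result of Gilbert, Polster and Tymoczko~\cite{Gil} gives $\mathrm{rank}\, R_{(D_{3,3},\alpha)} = 4$, so $R_{(D_{3,3},\alpha)} \otimes_R \mathbb{F}$ is a $4$-dimensional $\mathbb{F}$-vector space; four linearly independent elements therefore form an $\mathbb{F}$-basis of it.

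Next I would take an arbitrary spline $G \in R_{(D_{3,3},\alpha)}$ and write $G = \sum_{j=1}^4 c_j F_j$ with $c_j \in \mathbb{F}$, which is possible by the previous step. By Cramer's rule,
\[
c_j = \frac{\big\vert F_1 \text{ } \cdots \text{ } G \text{ } \cdots \text{ } F_4 \big\vert}{\big\vert F_1 \text{ } F_2 \text{ } F_3 \text{ } F_4 \big\vert},
\]
where $G$ occupies the $j$-th column of the numerator. Both determinants are determinants of four elements of $R_{(D_{3,3},\alpha)}$, so Corollary~\ref{d33cor} applies to each: the numerator equals $Q_{D_{3,3}}\cdot m_j$ for some $m_j \in R$, while the denominator is $r\cdot Q_{D_{3,3}}$ with $r$ a unit. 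Since $Q_{D_{3,3}} \neq 0$ we may cancel it, obtaining $c_j = r^{-1} m_j \in R$. Thus every coefficient lies in $R$, so $G$ is an $R$-linear combination of $F_1,\dots,F_4$. This shows $\{F_1,F_2,F_3,F_4\}$ generates $R_{(D_{3,3},\alpha)}$; combined with the linear independence from the first step, it is a linearly independent generating set and hence a basis (which simultaneously exhibits $R_{(D_{3,3},\alpha)}$ as free). The argument mirrors the $\Leftarrow$ direction of the cycle theorem, with the cyclic divisibility lemma replaced by its diamond-graph counterpart.

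I expect essentially all the content to sit in Corollary~\ref{d33cor}: the whole argument hinges on $Q_{D_{3,3}}$ dividing each swapped determinant $\big\vert F_1 \text{ } \cdots \text{ } G \text{ } \cdots \text{ } F_4 \big\vert$, which is exactly what guarantees that the Cramer quotient remains integral after the unit $r$ is divided out. The remaining steps — linear independence from a nonzero determinant, the dimension count over $\mathbb{F}$ via $\mathrm{rank} = 4$, and cancellation of the nonzero factor $Q_{D_{3,3}}$ — are routine, so no genuine obstacle should arise beyond invoking the divisibility result correctly.
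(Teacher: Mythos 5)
Your proposal is correct and follows essentially the same route as the paper: both arguments rest on Cramer's rule combined with the universal divisibility statement of Corollary~\ref{d33cor}, yielding that every spline is an $R$-linear combination of $F_1,\ldots,F_4$ with coefficients $r^{-1}m_j$. The only difference is presentational: the paper stays inside $R$ by first proving $Q_{D_{3,3}}\cdot R^4 \subseteq \langle F_1,F_2,F_3,F_4\rangle$ and then showing $Q_{D_{3,3}}$ divides the resulting coefficients via a column-operation identity, whereas you collapse these two steps into one by working over $\mathrm{Frac}(R)$ and cancelling $Q_{D_{3,3}}$ directly in the Cramer quotient.
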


\begin{proof} Since $\big\vert F_1 \text{ } F_2 \text{ } F_3 \text{ } F_4 \big\vert = r \cdot Q_{D_{3,3}}$, the set $\{ F_1 , F_2 , F_3 , F_4 \}$ is linearly independent. First we claim that $Q_{D_{3,3}} \cdot R^4 \in \big\langle F_1 , F_2 , F_3 , F_4 \big\rangle$. In order to see this, let $\big( r_1 , r_2 , r_3 , r_4 \big) \in R^4$. We need to show the existence of $a_1 , a_2 , a_3 , a_4 \in R$ such that
\begin{displaymath}
\big( Q_{D_{3,3}} r_1 \text{ , } Q_{D_{3,3}} r_2 \text{ , } Q_{D_{3,3}} r_3 \text{ , } Q_{D_{3,3}} r_4 \big) = \sum\limits_{i = 1} ^4 a_i F_i.
\end{displaymath}
Rewrite this equality in a matrix form as
\begin{displaymath}
\begin{bmatrix} Q_{D_{3,3}} r_4 \\ \vdots \\ Q_{D_{3,3}} r_1 \end{bmatrix} = \begin{bmatrix} f_{14} & \ldots & f_{44} \\ \vdots \\ f_{11} & \ldots & f_{41} \end{bmatrix} \begin{bmatrix} a_4 \\ \vdots \\ a_1 \end{bmatrix}.
\end{displaymath}
By Cramer's rule we get
\begin{displaymath}
a_1 = \dfrac{\begin{vmatrix} Q_{D_{3,3}} r_4 & \ldots & f_{44} \\ \vdots \\ Q_{D_{3,3}} r_1 & \ldots & f_{41} \end{vmatrix}}{Q_{D_{3,3}}} = \begin{vmatrix} r_4 & \ldots & f_{44} \\ \vdots \\ r_1 & \ldots & f_{41} \end{vmatrix} \in R.
\end{displaymath}
We can see the existence of $a_2 , a_3$ and $a_4$ by the same way. Hence we conclude that $Q_{D_{3,3}} R^4 \in \big\langle F_1 , F_2 , F_3 , F_4 \big\rangle$. Let $F \in R_{(D_{3,3} , \alpha)}\subset R^4$. Then $Q_{D_{3,3}} F \in \big\langle F_1 , F_2 , F_3 , F_4 \big\rangle$ and so 
\begin{displaymath}
Q_{D_{3,3}} F = \sum\limits_{i = 1} ^4 r_i F_i
\end{displaymath}
for some $r_i \in R$. Then
\begin{displaymath}
\begin{array}{ccl}
r_i r \cdot Q_{D_{3,3}} = r_i \big\vert F_1 \text{ } F_2 \text{ } F_3 \text{ } F_4 \big\vert = \big\vert F_1 \text{ } \ldots \text{ } r_i F_i \text{ } \ldots \text{ } F_4 \big\vert &=& \big\vert F_1 \text{ } \ldots \text{ } \sum\limits_{j = 1} ^4 r_j F_j \text{ } \ldots \text{ } F_4 \big\vert \vspace{.2cm} \\ &=& \big\vert F_1 \text{ } \ldots \text{ } Q_{D_{3,3}} F \text{ } \ldots \text{ } F_4 \big\vert \vspace{.2cm} \\ &=& Q_{D_{3,3}} \underbrace{\big\vert F_1 \text{ } \ldots \text{ } F \text{ } \ldots \text{ } F_4 \big\vert}_{\in Q_{D_{3,3}}R} \in Q_{D_{3,3}} ^2 R.
\end{array}
\end{displaymath}
Since $r$ is a unit by assumption, $Q_{D_{3,3}}$ divides $r_i$ and
\begin{displaymath}
F = \sum\limits_{i = 1} ^4 \bigg( \dfrac{r_i}{Q_{D_{3,3}}} \bigg) F_i \in \big\langle F_1 , F_2 , F_3 , F_4 \big\rangle.
\end{displaymath}
\end{proof}

The following theorem is proved for integer splines in~\cite{Mah} by using flow-up bases. We use a different approach to prove the statement since the existence of flow-up bases is not guaranteed over GCD domains.

\begin{theo} Let $(D_{3,3} , \alpha)$ be an edge labeled diamond graph as in Figure~\ref{d33}. Let $\{ F_1 , F_2 , F_3 , F_4 \} \subset R_{(D_{3,3} , \alpha)}$. If $\{ F_1 , F_2 , F_3 , F_4 \}$ is a basis for $R_{(D_{3,3} , \alpha)}$ , then $\big\vert F_1 \text{ } F_2 \text{ } F_3 \text{ } F_4 \big\vert = r \cdot Q_{D_{3,3}}$ where $r \in R$ is a unit.
\label{d33theo}
\end{theo}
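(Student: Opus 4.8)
The plan is to follow the template of the first part of Theorem~\ref{cycthm}. Since $\{F_1,F_2,F_3,F_4\}$ is in particular a set of four splines, Corollary~\ref{d33cor} immediately gives $\big\vert F_1 \text{ } F_2 \text{ } F_3 \text{ } F_4 \big\vert = r\cdot Q_{D_{3,3}}$ for some $r\in R$, so the entire content of the theorem is the statement that $r$ is a unit. The extra information carried by a basis is supplied by Proposition~\ref{detprop2}: because $\{F_1,F_2,F_3,F_4\}$ is a basis, its determinant $r\cdot Q_{D_{3,3}}$ divides $\big\vert G_1 \text{ } G_2 \text{ } G_3 \text{ } G_4 \big\vert$ for \emph{every} choice of splines $G_1,\dots,G_4\in R_{(D_{3,3},\alpha)}$. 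Hence the strategy is to exhibit enough explicit auxiliary splines whose determinants equal $Q_{D_{3,3}}$ multiplied by cofactors having trivial greatest common divisor; then $r$ divides each cofactor, so $r$ divides their gcd, which is $1$, and therefore $r$ is a unit.

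Concretely, I would construct a finite collection of spline matrices $A^{(i)}$, in direct analogy with the columns used in the proof of Theorem~\ref{cycthm} and with the displayed $C_5$ example. Each column is forced to be a spline by placing on the vertices lying ``above'' a fixed vertex the appropriate lcm/gcd combinations of edge labels — precisely the zero-trial data that defines $Q_{D_{3,3}}$ — and zeros elsewhere, so that the edge conditions on both triangles, including the shared edge $l_1$, are met automatically. After a few column reorderings the matrix $A^{(i)}$ becomes lower triangular, and reading off the diagonal yields $\big\vert A^{(i)} \big\vert = Q_{D_{3,3}}\cdot c_i$, where $c_i$ is a surplus factor (a power of a reduced label $l_i'$ obtained after writing $l_i = a\, l_i'$ and pulling out the relevant gcd). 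The factorization $Q_{D_{3,3}} = \big[\, l_1\cdot[l_2,l_3]\cdot[l_4,l_5]\ ,\ \tfrac{l_2 l_3 l_4 l_5}{(l_2,l_3,l_4,l_5)} \,\big]$ from Lemma~\ref{33lem2}, together with the product divisibilities of Lemma~\ref{33lem1}, is the guide for which entries to use so that the mixed denominator $\big((l_2,l_3)(l_4,l_5),\, l_1(l_2,l_3,l_4,l_5)\big)$ cancels exactly.

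I would then run the gcd bookkeeping exactly as in Theorem~\ref{cycthm}: Proposition~\ref{detprop2} gives $r\cdot Q_{D_{3,3}} \mid Q_{D_{3,3}}\cdot c_i$, hence $r\mid c_i$ for every $i$, and arranging the construction so that the $c_i$ are jointly coprime (or contain a coprime pair) forces $r\mid 1$. As in the cycle case this splits into cases according to which of the gcds $(l_2,l_3,l_4,l_5)$, $(l_2,l_3)$, $(l_4,l_5)$ and the cross-terms involving $l_1$ are trivial; in each case one identifies the coprime cofactors and concludes that $r$ is a unit.

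The main obstacle is the construction and the determinant evaluation itself. Unlike the cycle, where $Q_{C_n}$ is a single product-over-gcd and the auxiliary determinants come out cleanly as $Q_{C_n}\cdot(l_i')^{\,n-1}$, here $Q_{D_{3,3}}$ is a least common multiple of two genuinely different monomials — the ``through $l_1$'' term and the ``outer $4$-cycle'' term. Designing the entries of each $A^{(i)}$ so that every column simultaneously satisfies the spline condition across $l_1$ and across both triangles, while the resulting triangular determinant cancels the mixed denominator exactly and leaves a cofactor $c_i$ with the right coprimality, is the delicate point; these explicit $4\times 4$ matrices play the role that the displayed $A^{(i)}$ matrices play in the worked $C_5$ example. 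I expect most of the work to lie in verifying these two properties rather than in the final divisibility argument, which is then routine.
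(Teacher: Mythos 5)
Your high-level strategy coincides with the paper's: invoke Corollary~\ref{d33cor} to write $\big\vert F_1 \, F_2 \, F_3 \, F_4 \big\vert = r \cdot Q_{D_{3,3}}$, then manufacture explicit spline matrices whose determinants equal $Q_{D_{3,3}}$ times cofactors with trivial greatest common divisor, and use Proposition~\ref{detprop2} to force $r \mid 1$. However, as written your proposal defers precisely the step that constitutes the proof: you never exhibit the auxiliary splines, never compute their determinants, and never identify cofactors $c_i$ for which joint coprimality can actually be proved. You acknowledge this yourself when you say the construction and its verification are ``the delicate point'' where you ``expect most of the work to lie.'' Since the entire mathematical content of the theorem beyond Corollary~\ref{d33cor} is that construction, this is a genuine gap, not a routine verification to be filled in later.

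Moreover, the reduction you gesture at --- extracting a single gcd, writing $l_i = a\, l_i'$, expecting cofactors that are powers of reduced labels, and splitting into cases according to which of $(l_2,l_3,l_4,l_5)$, $(l_2,l_3)$, $(l_4,l_5)$ and cross-terms with $l_1$ are trivial --- is not the one that works, and it signals that the key idea is missing. The paper extracts \emph{two separate} gcds, $d_1 = (l_2,l_3)$ and $d_2 = (l_4,l_5)$, writing $l_2 = d_1 l_2'$, $l_3 = d_1 l_3'$, $l_4 = d_2 l_4'$, $l_5 = d_2 l_5'$, so that $(l_2',l_3') = 1$ and $(l_4',l_5') = 1$ hold automatically. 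It then builds four matrices whose columns are the constant spline, the one-vertex splines $[l_2,l_3]$ and $[l_4,l_5]$ at the apex vertices, and a spline supported at one vertex of the shared edge with value $\big[ l_1 , (l_2,l_3) , (l_4,l_5) \big] l_j' l_k'$, $j \in \{2,3\}$, $k \in \{4,5\}$; divisibility of this value by the three incident edge labels is exactly what the factors $d_1$ and $d_2$ inside the lcm buy you. Each matrix is triangular up to a column permutation with determinant $Q_{D_{3,3}} \cdot l_j' l_k'$, and the four resulting cofactors satisfy $\big( l_2' l_4' , l_2' l_5' , l_3' l_4' , l_3' l_5' \big) = \big( l_2'(l_4',l_5') , l_3'(l_4',l_5') \big) = (l_2',l_3') = 1$ unconditionally --- no case analysis at all. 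Until you produce this (or an equivalent) explicit family and verify both the spline conditions and the determinant evaluations, your argument remains a plan rather than a proof.
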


\begin{proof} Since $\{ F_1 , F_2 , F_3 , F_4 \} \subset R_{(D_{3,3} , \alpha)}$, the determinant $\big\vert F_1 \text{ } F_2 \text{ } F_3 \text{ } F_4 \big\vert = r \cdot Q_{D_{3,3}}$ for some $r \in R$ by Corollary~\ref{d33cor}. We will show that $r$ is a unit. Let $d_1 = (l_2 , l_3)$ and $d_2 = (l_4 , l_5)$. Then we have $l_2 = d_1 l' _2$, $l_3 = d_1 l' _3$ with $(l' _2 , l' _3) = 1$ and $l_4 = d_2 l' _4$, $l_5 = d_2 l' _5$ with $(l' _4 , l' _5) = 1$. Consider the following matrices:
\begin{displaymath}
A_1 = \begin{bmatrix}
1 & 0 & 0 & [l_4 , l_5] \\
1 & 0 & [l_2 , l_3] & 0 \\
1 & 0 & 0 & 0 \\
1 & \big[ l_1 , (l_2 , l_3) , (l_4 , l_5) \big] l' _3 l' _5 & 0 & 0
\end{bmatrix} , \hspace{.2cm}
A_2 = \begin{bmatrix}
1 & \big[ l_1 , (l_2 , l_3) , (l_4 , l_5) \big] l' _4 l' _2 & 0 & [l_4 , l_5] \\
1 & \big[ l_1 , (l_2 , l_3) , (l_4 , l_5) \big] l' _4 l' _2 & [l_2 , l_3] & 0 \\
1 & 0 & 0 & 0 \\
1 & \big[ l_1 , (l_2 , l_3) , (l_4 , l_5) \big] l' _4 l' _2 & 0 & 0
\end{bmatrix},
\end{displaymath}

\begin{displaymath}
A_3 = \begin{bmatrix}
1 & \big[ l_1 , (l_2 , l_3) , (l_4 , l_5) \big] l' _3 l' _4 & 0 & [l_4 , l_5] \\
1 & 0 & [l_2 , l_3] & 0 \\
1 & 0 & 0 & 0 \\
1 & \big[ l_1 , (l_2 , l_3) , (l_4 , l_5) \big] l' _3 l' _4 & 0 & 0
\end{bmatrix} , \hspace{.2cm}
A_4 = \begin{bmatrix}
1 & 0 & 0 & [l_4 , l_5] \\
1 & \big[ l_1 , (l_2 , l_3) , (l_4 , l_5) \big] l' _2 l' _5 & [l_2 , l_3] & 0 \\
1 & 0 & 0 & 0 \\
1 & \big[ l_1 , (l_2 , l_3) , (l_4 , l_5) \big] l' _2 l' _5 & 0 & 0
\end{bmatrix}.
\end{displaymath}
It can be easily seen that each column of $A_1 , A_2 , A_3$ and $A_4$ is an element of $R_{(D_{3,3} , \alpha)}$. By Proposition~\ref{detprop2}, $\big\vert F_1 \text{ } F_2 \text{ } F_3 \text{ } F_4 \big\vert = r \cdot Q_{D_{3,3}}$ divides $\big\vert A_1 \big\vert = Q_{D_{3,3}} \cdot l' _3 l' _5$. Hence $r$ divides $l' _3 l' _5$. One can conclude that $r$ divides $l' _2 l' _4 , l' _3 l' _4$ and $l' _2 l' _5$ by the same observation. Thus we have
\begin{displaymath}
r \Big\vert \big( l' _2 l' _4 , l' _2 l' _5 , l' _3 l' _4 , l' _3 l' _5 \big) = \big( (l' _2 l' _4 , l' _2 l' _5) , (l' _3 l' _4 , l' _3 l' _5) \big) = \big( l' _2 \underbrace{(l' _4 , l' _5)}_{=1} , l' _3 \underbrace{(l' _4 , l' _5)}_{=1} \big) = (l' _2 , l' _3) = 1
\end{displaymath}
and so $r$ is a unit.
\end{proof}

We have the following result as the main theorem of this section by combining Lemma~\ref{cycd331} and Theorem~\ref{d33theo}. This result was given as a conjecture over the base ring $\mathbb{Z}$ in~\cite{Mah}.

\begin{theo} Let $(D_{3,3} , \alpha)$ be an edge labeled diamond graph as in Figure~\ref{d33}. Then $\{ F_1 , F_2 , F_3 , F_4 \} \subset R_{(D_{3,3} , \alpha)}$ is a basis for $R_{(D_{3,3} , \alpha)}$ if and only if $\big\vert F_1 \text{ } F_2 \text{ } F_3 \text{ } F_4 \big\vert = r \cdot Q_{D_{3,3}}$ where $r \in R$ is a unit.
\label{d33theo2}
\end{theo}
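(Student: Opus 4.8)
The plan is to observe that the asserted biconditional decomposes cleanly into the two implications that have already been established in this section, so that no new argument is needed beyond a citation of each. First I would address the ``only if'' direction: assuming $\{ F_1 , F_2 , F_3 , F_4 \}$ is a basis for $R_{(D_{3,3} , \alpha)}$, Theorem~\ref{d33theo} gives immediately that $\big\vert F_1 \text{ } F_2 \text{ } F_3 \text{ } F_4 \big\vert = r \cdot Q_{D_{3,3}}$ with $r$ a unit. Then I would address the ``if'' direction: assuming $\big\vert F_1 \text{ } F_2 \text{ } F_3 \text{ } F_4 \big\vert = r \cdot Q_{D_{3,3}}$ with $r$ a unit, Lemma~\ref{cycd331} gives that $\{ F_1 , F_2 , F_3 , F_4 \}$ is a basis. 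Combining the two implications yields the equivalence, completing the proof.

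Since the theorem is purely a repackaging of Lemma~\ref{cycd331} and Theorem~\ref{d33theo}, I do not expect any genuine obstacle at this stage; the substantive content lives entirely in those two prior results. For the record, the real work was done earlier: in Lemma~\ref{cycd331} the Cramer's-rule computation establishes that $Q_{D_{3,3}} R^4 \subseteq \big\langle F_1 , F_2 , F_3 , F_4 \big\rangle$, whence every spline $F$ satisfies $Q_{D_{3,3}} F \in \big\langle F_1 , \ldots , F_4 \big\rangle$, and the unit hypothesis on $r$ forces $Q_{D_{3,3}}$ to divide each coefficient, so that $F$ itself lies in the span. Dually, Theorem~\ref{d33theo} is where the divisibility $Q_{D_{3,3}} \mid \big\vert F_1 \text{ } F_2 \text{ } F_3 \text{ } F_4 \big\vert$ from Corollary~\ref{d33cor} is sharpened to the statement that the quotient $r$ is a unit, via the four explicit auxiliary matrices $A_1 , \ldots , A_4$ whose columns are splines and whose determinants equal $Q_{D_{3,3}}$ times products of the reduced labels $l'_2 , l'_3 , l'_4 , l'_5$; the coprimality relations $(l'_2 , l'_3) = (l'_4 , l'_5) = 1$ then collapse the relevant greatest common divisor to $1$.

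This mirrors exactly the structure of the cycle result in Theorem~\ref{cycthm}, where one direction followed from the divisibility Lemma~\ref{cyclem1} together with the basis-determinant divisibility of Proposition~\ref{detprop2}, and the reverse direction required the explicit matrices $A^{(i)}$ to certify that the quotient is a unit. In writing the final proof I would therefore keep it to a single short paragraph that names the two cited results and states that their conjunction is precisely the claimed equivalence, rather than reproducing any of the underlying computations.
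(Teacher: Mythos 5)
Your proposal is correct and matches the paper exactly: the paper itself presents Theorem~\ref{d33theo2} as the direct combination of Lemma~\ref{cycd331} (the ``if'' direction) and Theorem~\ref{d33theo} (the ``only if'' direction), with no additional argument. Your citation of each prior result for the corresponding implication is precisely the intended proof.
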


\subsection{\normalsize{Determinant of Splines on Diamond Graph $D_{m,n}$}} \label{secdmn}

In this section, we generalize the results from Section~\ref{secd33} to the diamond graph $D_{m,n}$. We use the following illustration of $D_{m,n}$:

\begin{figure}[H]
\begin{center}
\scalebox{0.16}{\includegraphics{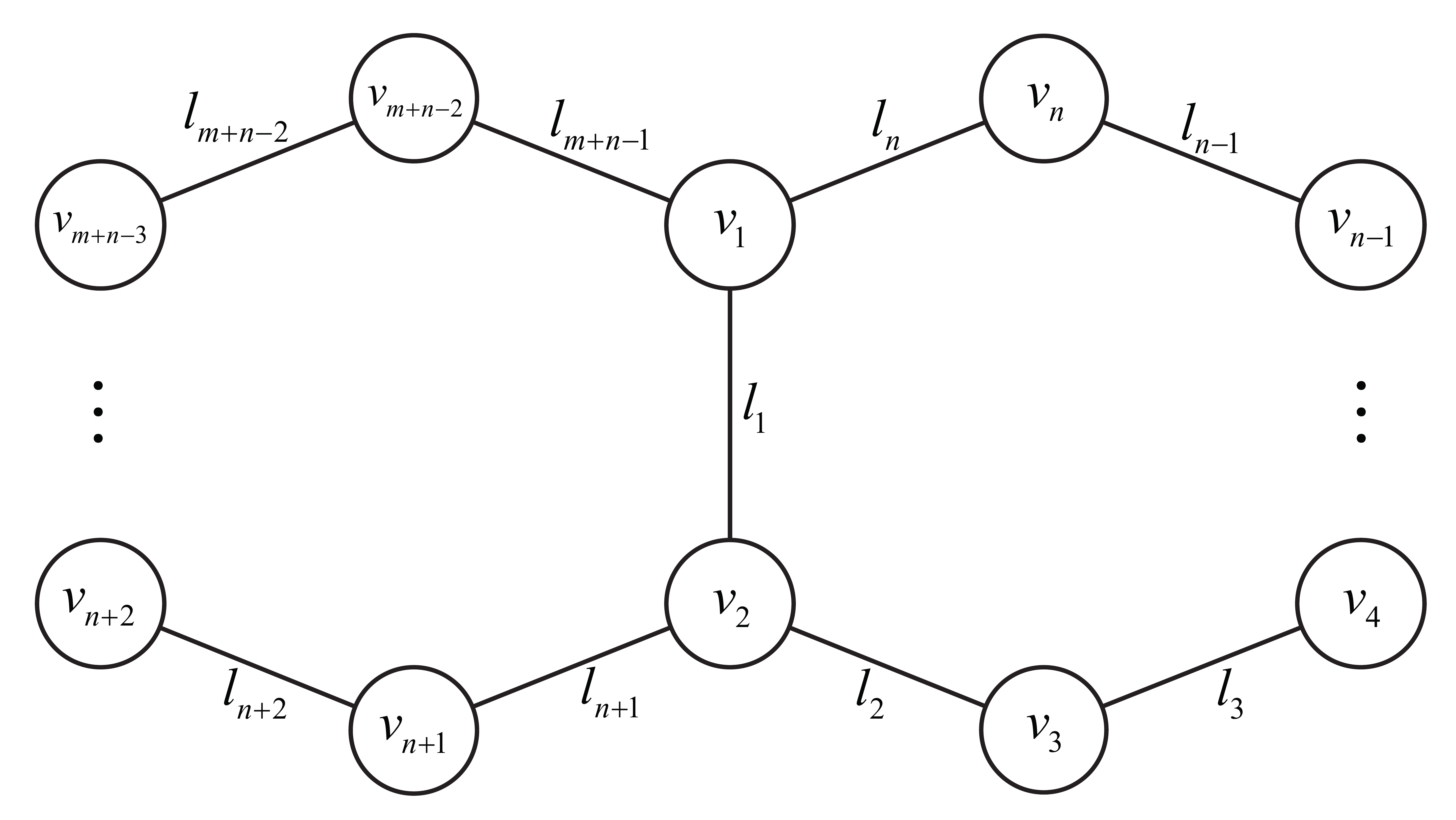}}
\caption{Edge labeled diamond graph $(D_{m,n} , \alpha)$}
\label{dmn}
\end{center}
\end{figure}

We can write $Q_{D_{m,n}}$ in terms of edge labels.
\begin{lem} Let $(D_{m,n} , \alpha)$ be as in Figure~\ref{dmn}. Then
\begin{displaymath}
Q_{D_{m,n}} = \dfrac{l_1 l_2 \cdots l_{m+n-1}}{\big( (l_2 , \ldots , l_n) (l_{n+1} , \ldots , l_{m+n-1}) \text{ , } l_1 (l_2 , \ldots , l_{m+n-1}) \big)}.
\end{displaymath}
\end{lem}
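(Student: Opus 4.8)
The plan is to mirror the two-stage strategy already used for $Q_{C_n}$ and for $Q_{D_{3,3}}$ in Lemma~\ref{33lemm}: first write $Q_{D_{m,n}}$ as the product over vertices of the least common multiples of the zero trials, exploiting the fact that $D_{m,n}$ is two cycles glued along the edge $l_1$, and then collapse that product to the stated closed form by elementary GCD/LCM algebra. Reading off the zero-trial product from the definition of $Q_G$ in~\cite{Alt}, I expect the contributions to split into three groups according to the geometry of the diamond: the edges $l_2,\ldots,l_n$ form the free part of the $C_n$-side, the edges $l_{n+1},\ldots,l_{m+n-1}$ the free part of the $C_m$-side, and the junction vertex where the two cycles meet contributes a single three-way factor involving the shared edge $l_1$ together with the two cyclic gcd's. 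Concretely, I would establish
\[
Q_{D_{m,n}} = \big[l_1,(l_2,\ldots,l_n),(l_{n+1},\ldots,l_{m+n-1})\big]\cdot\prod_{j=2}^{n-1}\big[l_j,(l_{j+1},\ldots,l_n)\big]\cdot\prod_{j=n+1}^{m+n-2}\big[l_j,(l_{j+1},\ldots,l_{m+n-1})\big],
\]
noting that the number of factors, $1+(n-2)+(m-2)=m+n-3$, matches the number of vertices $v_2,\ldots,v_{m+n-2}$ over which the product ranges. For $m=n=3$ this reduces exactly to the three-factor expression used in the proof of Lemma~\ref{33lemm}.

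Next I would collapse the two cyclic chains. By the same telescoping computation that proves $Q_{C_n}=\tfrac{l_1\cdots l_n}{(l_1,\ldots,l_n)}$, each chain simplifies to a single fraction:
\[
\prod_{j=2}^{n-1}\big[l_j,(l_{j+1},\ldots,l_n)\big]=\frac{l_2\cdots l_n}{(l_2,\ldots,l_n)},\qquad \prod_{j=n+1}^{m+n-2}\big[l_j,(l_{j+1},\ldots,l_{m+n-1})\big]=\frac{l_{n+1}\cdots l_{m+n-1}}{(l_{n+1},\ldots,l_{m+n-1})}.
\]
Writing $g_1=(l_2,\ldots,l_n)$ and $g_2=(l_{n+1},\ldots,l_{m+n-1})$ and expanding the junction factor by the three-element identity $[a,b,c]=\tfrac{abc}{(ab,bc,ca)}$ (exactly as in the proof of Lemma~\ref{33lemm}), the factor $g_1 g_2$ produced by the LCM cancels against the denominators $g_1$ and $g_2$ of the two telescoped fractions, leaving $\dfrac{l_1 l_2\cdots l_{m+n-1}}{(l_1 g_1,\,l_1 g_2,\,g_1 g_2)}$. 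Finally I would use $(l_1 g_1,l_1 g_2)=l_1(g_1,g_2)$ together with $(g_1,g_2)=(l_2,\ldots,l_{m+n-1})$ to rewrite the denominator as $\big((l_2,\ldots,l_n)(l_{n+1},\ldots,l_{m+n-1}),\,l_1(l_2,\ldots,l_{m+n-1})\big)$, which is precisely the claimed formula.

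The second stage is routine and is identical in spirit to the cycle and $D_{3,3}$ computations. The genuine obstacle is the first stage: justifying the zero-trial decomposition for arbitrary $m,n$. Unlike $D_{3,3}$, where each cycle contributes only a single internal vertex and hence a single, already-collapsed factor, the general diamond has several internal vertices along each cycle, so one must verify that the zero trials at these vertices produce exactly the cyclic factors $[l_j,(l_{j+1},\ldots,l_n)]$ (and their $C_m$ analogues) and that the degree-three junction vertex produces exactly $[l_1,(l_2,\ldots,l_n),(l_{n+1},\ldots,l_{m+n-1})]$ with no extra cross-terms linking the two sides. This is where the combinatorics of zero trials from~\cite{Alt} must be invoked carefully; once the decomposition is confirmed, the telescoping and the GCD/LCM manipulations deliver the result mechanically.
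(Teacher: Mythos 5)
Your proposal follows essentially the same route as the paper: its proof starts from exactly the same zero-trial product decomposition (the junction factor $\big[l_1,(l_2,\ldots,l_n),(l_{n+1},\ldots,l_{m+n-1})\big]$ times the two chains $\prod_{j}\big[l_j,(l_{j+1},\ldots,l_n)\big]$ and $\prod_{j}\big[l_j,(l_{j+1},\ldots,l_{m+n-1})\big]$) and then collapses it to the stated closed form. If anything, your second stage (telescoping each chain, the identity $[a,b,c]=\tfrac{abc}{(ab,bc,ca)}$, and $(l_1g_1,l_1g_2)=l_1(g_1,g_2)$) spells out precisely the algebra that the paper compresses into ``the last equality can be shown easily.''
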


\begin{proof}
\begin{displaymath}
\begin{split}
Q_{D_{m,n}} =& \big[ l_1 , (l_2 , \ldots , l_n) , (l_{n+1} , \ldots , l_{m+n-1})\big] \cdot \big[ l_2 , (l_3 , \ldots , l_n) \big] \cdots \big[ l_{n-2} , (l_{n-1} , l_n) \big] \cdot \\ \cdot& \big[ l_{n-1} , l_n \big] \cdot \big[ l_{n+1}, (l_{n+2} , \ldots , l_{m+n-1}) \big] \cdots \big[ l_{m+n-3} , (l_{m+n-2} , l_{m+n-1}) \big] \cdot \\ \cdot& \big[ l_{m+n-2} , l_{m+n-1} \big] \\
=& \dfrac{l_1 l_2 \cdots l_{m+n-1}}{\big( (l_2 , \ldots , l_n) (l_{n+1} , \ldots , l_{m+n-1}) \text{ , } l_1 (l_2 , \ldots , l_{m+n-1}) \big)}.
\end{split}
\end{displaymath}
The last equality can be shown easily by rewriting the least common multiples and greatest common divisors explicitly.
\end{proof}

Let $\{ F_1 , \ldots , F_{m+n-2} \} \subset R_{(D_{m,n} , \alpha)}$. We will prove that $Q_{D_{m,n}}$ divides $\big\vert F_1 \text{ } F_2 \text{ } \ldots \text{ } F_{m+n-2} \big\vert$. In order to do this, first we determine the products of certain edge labels that divide the determinant $\big\vert F_1 \text{ } F_2 \text{ } \ldots \text{ } F_{m+n-2} \big\vert$. We claim that the product of the edge labels $l' _1 , \ldots , l' _{m+n-3}$ whose corresponding edges do not contain the subcycles $C_m$ or $C_n$ of $D_{m,n}$ divide the determinant. We can characterize such edge labels in two forms:
\begin{itemize}
	\item $p_i = l_2 l_3 \cdots \hat{l_i} \cdots l_n$ where $2 \leq i \leq m+n-1$,
	\item $q_{j,k} = (l_1 \cdots \hat{l_j} \cdots l_n)( l_{n+1} \cdots \hat{l_k} \cdots l_{m+n-1})$ where $2 \leq j \leq n$ and $n+1 \leq k \leq m+n-1$.
\end{itemize}

\begin{lem} Let $D_{m,n}$ be as in Figure~\ref{dmn} and let $\{ F_1 , \ldots , F_{m+n-2} \} \subset R_{(D_{m,n} , \alpha)}$. Then $p_i$ and $q_{j,k}$ divide the determinant $\big\vert F_1 \text{ } F_2 \text{ } \ldots \text{ } F_{m+n-2} \big\vert$ for all $2 \leq i \leq m+n-1$, $2 \leq j \leq n$ and $n+1 \leq k \leq m+n-1$.
\label{dmnlem}
\end{lem}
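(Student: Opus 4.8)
The plan is to observe that both $p_i$ and $q_{j,k}$ are products of exactly $m+n-3$ edge labels whose corresponding edges form a spanning tree of $D_{m,n}$, and to exploit the general fact that the product of the labels along a spanning tree divides the spline determinant. Since $D_{m,n}$ has $m+n-2$ vertices, a spanning tree has $m+n-3$ edges; as each of $p_i$ and $q_{j,k}$ is a product of $m+n-3$ labels, this count is exactly right, and the hypothesis that the chosen edges ``do not contain a subcycle'' says precisely that the edge set is acyclic, hence (having the maximal number of edges on $m+n-2$ vertices) a spanning tree.

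The tool I would isolate first is the following: if $T$ is a spanning tree of $(G,\alpha)$ whose edges carry labels $l_{e_1}, \ldots, l_{e_{N-1}}$ with $N=\vert V \vert$, then $l_{e_1}\cdots l_{e_{N-1}}$ divides $\big\vert F_1 \; \cdots \; F_N \big\vert$ for all $F_1, \ldots, F_N \in R_{(G,\alpha)}$. To prove this, root $T$ at a vertex $r$ and process the non-root vertices in order of decreasing depth (leaves first); for each such vertex $u$ with parent $p(u)$ joined by the tree edge $e_u$, replace the row of the spline matrix indexed by $u$ with the difference of the rows indexed by $u$ and $p(u)$. Since $f_{i,u}-f_{i,p(u)} \in \alpha(e_u)=(l_{e_u})$ for every $F_i$, the new row is divisible by $l_{e_u}$, and processing leaves first guarantees each subtraction uses the still-unmodified parent row. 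These operations leave the determinant unchanged, and factoring $l_{e_u}$ out of each of the $N-1$ non-root rows exhibits $\big\vert F_1 \; \cdots \; F_N \big\vert = (l_{e_1}\cdots l_{e_{N-1}})\cdot D$ with $D \in R$. This is exactly the mechanism already carried out by hand in the proof of Lemma~\ref{33lem1}.

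With this in hand the two cases are quick. For $p_i$, the non-common edges $l_2, \ldots, l_{m+n-1}$ form the outer boundary cycle $C_{m+n-2}$ of $D_{m,n}$, and every spline on $D_{m,n}$ is in particular a spline on this outer cycle carrying the very same spline matrix; deleting the single edge $l_i$ leaves a spanning path, so Lemma~\ref{cyclem} applied to the outer cycle yields that $p_i$ divides the determinant. For $q_{j,k}$ I would take the edge set $T_{j,k}$ consisting of the edges of $C_n$ except $l_j$ (namely $l_1, l_2, \ldots, \widehat{l_j}, \ldots, l_n$) together with the non-common edges of $C_m$ except $l_k$ (namely $l_{n+1}, \ldots, \widehat{l_k}, \ldots, l_{m+n-1}$). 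The first group is $C_n$ with one edge deleted, hence a spanning path of the $C_n$-side that in particular joins the two apex vertices $A$ and $B$ of the common edge $l_1$. The non-common edges of $C_m$ form a path from $A$ to $B$, and deleting $l_k$ from it cannot separate any vertex from $\{A,B\}$ because $A$ and $B$ are already joined inside the first group. Thus $T_{j,k}$ is connected, has $(n-1)+(m-2)=m+n-3$ edges on $m+n-2$ vertices, and is therefore a spanning tree; the tool above then gives that $q_{j,k}$ divides $\big\vert F_1 \; F_2 \; \ldots \; F_{m+n-2} \big\vert$.

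The main obstacle is the bookkeeping for $q_{j,k}$: one must verify that $T_{j,k}$ is genuinely a spanning tree for every admissible pair $(j,k)$ — in particular that removing $l_k$ from the $C_m$-path never strands a vertex away from both apices — and that the simultaneous row reductions can be organized in a consistent leaves-first order so that each label factors out cleanly. This is precisely the content that the explicit $D_{3,3}$ computation in Lemma~\ref{33lem1} settles by hand; the general case merely requires arranging those same row operations along an arbitrary rooted tree rather than checking them edge-product by edge-product.
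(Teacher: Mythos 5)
Your proposal is correct, and at bottom it runs on the same engine as the paper's proof --- row operations replacing a vertex's row by its difference with a neighbor's row, after which the edge label factors out --- but you package that engine as a genuinely more general key lemma. For $p_i$ the two arguments coincide: restrict to the outer cycle and apply Lemma~\ref{cyclem}. For $q_{j,k}$ the paper argues in two ad hoc stages: it performs cycle-style row operations on the $C_m$-side rows to extract $l_{n+1} \cdots \hat{l_k} \cdots l_{m+n-1}$, then on the last $n$ rows to extract $l_1 \cdots \hat{l_j} \cdots l_n$, and finishes by asserting that the two batches of operations are independent. Your spanning-tree lemma is precisely the formalization of that assertion: the union of the paper's two edge sets is exactly your tree $T_{j,k}$, and your leaves-first ordering (each child row subtracted against a still-unmodified parent row) is what makes the ``independence'' rigorous, since each label is extracted from a distinct row and no subtraction ever reads an already-altered row. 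What your route buys: a single reusable statement that subsumes Lemma~\ref{cyclem}, Lemma~\ref{33lem1} and this lemma, and that would immediately give the corresponding divisibility for the generalizations the authors mention after Theorem~\ref{dmntheo} (several cycles glued along a common edge or a common tree). What the paper's route buys: brevity, since Lemma~\ref{cyclem} is already in hand and the two-stage reduction for $D_{m,n}$ is short. Your connectivity check for $T_{j,k}$ --- both endpoints of the common edge $l_1$ remain joined through the $C_n$-side path, so deleting $l_k$ strands no vertex --- together with the edge count $(n-1)+(m-2)=m+n-3$ on $m+n-2$ vertices, is exactly the bookkeeping needed to certify the spanning-tree property, so no gap remains.
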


\begin{proof} Since $l_2 , \ldots , l_{m+n-1}$ correspond to the edge labels of the outer cycle $C_{m+n}$, $p_i$ divides the determinant $\big\vert F_1 \text{ } F_2 \text{ } \ldots \text{ } F_{m+n-2} \big\vert$ for all $2 \leq i \leq n$ by Lemma~\ref{cyclem}. For $q_{j,k}$, we consider two subcylces $C_m$ and $C_n$ of $D_{m,n}$. Since $l_1 , l_{n+1} , \ldots , l_{m+n-1}$ and $l_1 , l_2 , \ldots , l_n$ correspond to the edge labels of the subcycles $C_m$ (left hand side) and $C_n$ (right hand side) respectively, we can do first suitable row operations on the matrix $[F_1 \text{ } F_2 \text{ } \ldots \text{ } F_{m+n-2}]$ without changing last two rows so  that $l_{n+1} \cdots \hat{l_k} \cdots l_{m+n-1}$ divides the determinant and then we carry on doing necessary row operations on the last $n$ rows of the changed matrix so that $l_1 \cdots \hat{l_j} \cdots l_n$  divides the determinant as in the proof of Lemma~\ref{cyclem}. Notice that the necessary row operations for $l_1 \cdots \hat{l_j} \cdots l_n$ and $l_{n+1} \cdots \hat{l_k} \cdots l_{m+n-1}$ are independent. Hence we conclude that $q_{j,k}$ divides the determinant.
\end{proof}

Define the following two sets:
\begin{itemize}
	\item $\mathscr{P} = \{ p_i \text{ $\vert$ } 2 \leq i \leq m+n-1 \}$,
	\item $\mathscr{Q} = \{ q_{j,k} \text{ $\vert$ } 2 \leq j \leq n, n+1 \leq k \leq m+n-1 \}$
\end{itemize}
Since each element of $\mathscr{P} \cup \mathscr{Q}$ divides $\big\vert F_1 \text{ } F_2 \text{ } \ldots \text{ } F_{m+n-2} \big\vert$ by Lemma~\ref{dmnlem}, the least common multiple $\big[ \mathscr{P} \cup \mathscr{Q} \big]$ also divides the determinant. The following lemma shows that we can consider $Q_{D_{m,n}}$ as the least common multiple of the elements of $\mathscr{P} \cup \mathscr{Q}$:

\begin{lem} $Q_{D_{m,n}} = \big[ \mathscr{P} \cup \mathscr{Q} \big]$.
\label{dmnlem2}
\end{lem}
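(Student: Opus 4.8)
The plan is to evaluate the two least common multiples $[\mathscr{P}]$ and $[\mathscr{Q}]$ separately and then combine them, paralleling the three-step argument that produced Lemma~\ref{33lem2} in the $D_{3,3}$ case. Since the least common multiple is associative, $\big[ \mathscr{P} \cup \mathscr{Q} \big] = \big[ [\mathscr{P}] , [\mathscr{Q}] \big]$, so it suffices to compute each piece and take one further least common multiple.

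For $\mathscr{P}$, the point is that the $p_i$ are exactly the leave-one-out products of the outer-cycle labels $l_2 , \ldots , l_{m+n-1}$. Hence the identity underlying the cyclic case, namely $\big[ \hat{l_1} , \ldots , \hat{l_N} \big] = (l_1 \cdots l_N)/(l_1 , \ldots , l_N)$ used in the proof of Lemma~\ref{cyclem1}, applies directly and gives
\begin{displaymath}
[\mathscr{P}] = \dfrac{l_2 l_3 \cdots l_{m+n-1}}{(l_2 , l_3 , \ldots , l_{m+n-1})}.
\end{displaymath}

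For $\mathscr{Q}$, I would first factor $q_{j,k} = l_1 \cdot a_j \cdot b_k$, where $a_j = (l_2 \cdots l_n)/l_j$ depends only on the right-hand index and $b_k = (l_{n+1} \cdots l_{m+n-1})/l_k$ only on the left-hand one. The crucial step is that the least common multiple decouples over these two independent families: using the GCD-domain identity $\big[ x c_1 , \ldots , x c_r \big] = x \big[ c_1 , \ldots , c_r \big]$ (itself a consequence of $[a,b] = ab/(a,b)$ and $(xa , xb) = x(a,b)$), and grouping first over $k$ and then over $j$, one obtains
\begin{displaymath}
[\mathscr{Q}] = l_1 \cdot \big[ a_j \mid 2 \leq j \leq n \big] \cdot \big[ b_k \mid n+1 \leq k \leq m+n-1 \big].
\end{displaymath}
Each bracketed factor is once more a leave-one-out least common multiple over a cycle's labels, so the cyclic formula applies to each, yielding $[\mathscr{Q}] = l_1 \, \tfrac{(l_2 \cdots l_n)(l_{n+1} \cdots l_{m+n-1})}{(l_2 , \ldots , l_n)(l_{n+1} , \ldots , l_{m+n-1})}$.

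Finally I would combine the two expressions via $[x,y] = xy/(x,y)$, clearing the two denominators over a common factor and using $(xa , xb) = x(a,b)$ to extract the common $l_2 \cdots l_{m+n-1}$; this is precisely the algebraic simplification already carried out in Lemma~\ref{33lem2}, now with $(l_2 , l_3)$, $(l_4 , l_5)$ replaced by $(l_2 , \ldots , l_n)$, $(l_{n+1} , \ldots , l_{m+n-1})$ and $(l_2 , l_3 , l_4 , l_5)$ by $(l_2 , \ldots , l_{m+n-1})$. The computation collapses to $\tfrac{l_1 l_2 \cdots l_{m+n-1}}{\big( (l_2 , \ldots , l_n)(l_{n+1} , \ldots , l_{m+n-1}) , \, l_1 (l_2 , \ldots , l_{m+n-1}) \big)}$, which is exactly $Q_{D_{m,n}}$. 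The main obstacle is justifying the decoupling $\big[ a_j b_k \mid j,k \big] = \big[ a_j \mid j \big]\big[ b_k \mid k \big]$: one might wrongly expect least common multiples not to factor, but here the factorization is valid in any GCD domain because it rests solely on the scalar identity $\big[ x c_1 , \ldots , x c_r \big] = x \big[ c_1 , \ldots , c_r \big]$ applied twice, everything else being the routine GCD/LCM bookkeeping already rehearsed for $D_{3,3}$.
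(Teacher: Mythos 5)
Your proposal is correct and follows essentially the same route as the paper: the paper's proof likewise splits $\big[\mathscr{P}\cup\mathscr{Q}\big]$ into $\big[[\mathscr{P}],[\mathscr{Q}]\big]$, evaluates $[\mathscr{P}]$ by the leave-one-out formula $\tfrac{l_2\cdots l_{m+n-1}}{(l_2,\ldots,l_{m+n-1})}$ and $[\mathscr{Q}]$ as the decoupled product $l_1\cdot\tfrac{l_2\cdots l_n}{(l_2,\ldots,l_n)}\cdot\tfrac{l_{n+1}\cdots l_{m+n-1}}{(l_{n+1},\ldots,l_{m+n-1})}$, and then combines via $[x,y]=xy/(x,y)$ and clears denominators using $(xa,xb)=x(a,b)$. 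Your write-up is in fact slightly more careful than the paper's, since you explicitly justify the decoupling of $[\mathscr{Q}]$ via the scalar identity $[xc_1,\ldots,xc_r]=x[c_1,\ldots,c_r]$, a step the paper performs without comment.
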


\begin{proof}
\begin{flalign} \nonumber
\big[ \mathscr{P} \cup \mathscr{Q} \big] &= \Big[ \big[ l_2 \cdots \hat{l_i} \cdots l_{m+n-1} \text{ $\vert$ } 2 \leq i \leq m+n-1 \big] \text{ , } \big[ l_1 \cdots \hat{l_j} \cdots l_n l_{n+1} \cdots \hat{l_k} \cdots l_{m+n-1}\text{ $\vert$ } \substack{2 \leq j \leq n, \\ n+1 \leq k \leq m+n-1} \big] \Big]& \vspace{.3cm} \\ \nonumber
&= \left[ \dfrac{l_2 \cdots l_{m+n-1}}{(l_2, \ldots ,l_{m+n-1})} \text{ , } l_1 \cdot \dfrac{l_2 \cdots l_n}{(l_2 , \ldots , l_n)} \cdot \dfrac{l_{n+1} \cdots l_{m+n-1}}{(l_{n+1}, \ldots ,l_{m+n-1})} \right]\vspace{.3cm} \\ \nonumber
&= \dfrac{ \dfrac{l_1 \cdots l_{m+n-1} \cdot l_2 \cdots l_{m+n-1}}{(l_2 , \ldots , l_n)(l_{n+1}, \ldots ,l_{m+n-1})(l_2, \ldots ,l_{m+n-1})} }{\left( l_1 \cdot \dfrac{l_2 \cdots l_n}{(l_2 , \ldots , l_n)} \cdot \dfrac{l_{n+1} \cdots l_{m+n-1}}{(l_{n+1}, \ldots ,l_{m+n-1})} \text{ , } \dfrac{l_2 \cdots l_{m+n-1}}{(l_2, \ldots ,l_{m+n-1})} \right)}\vspace{.3cm} \\ \nonumber
&= \dfrac{l_1 \cdots l_{m+n-1} \cdot l_2 \cdots l_{m+n-1}}{\big( l_1 \cdots l_{m+n-1}(l_2, \ldots ,l_{m+n-1}) \text{ , } l_2 \cdots l_{m+n-1} (l_2 , \ldots , l_n) (l_{n+1}, \ldots ,l_{m+n-1}) \big) }\vspace{.3cm} \\ \nonumber
&= \dfrac{l_1 \cdots l_{m+n-1}}{\big( l_1 (l_2, \ldots ,l_{m+n-1}) \text{ , } (l_2 , \ldots , l_n) (l_{n+1}, \ldots ,l_{m+n-1}) \big) }\vspace{.3cm} \\ \nonumber
&= Q_{D_{m,n}}.
\end{flalign}
\end{proof}

Lemma~\ref{dmnlem2} gives us the following result:

\begin{cor} Let $(D_{m,n} , \alpha)$ be as in Figure~\ref{dmn} and let $\{ F_1 , \ldots , F_{m+n-2} \} \subset R_{(D_{m,n} , \alpha)}$. Then $Q_{D_{m,n}}$ divides $\big\vert F_1 \text{ } F_2 \text{ } \ldots \text{ } F_{m+n-2} \big\vert$.
\label{dmncor2}
\end{cor}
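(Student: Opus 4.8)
The plan is to combine the two immediately preceding lemmas. First I would invoke Lemma~\ref{dmnlem} to record that every element of the set $\mathscr{P} \cup \mathscr{Q}$ divides the determinant; that is, each $p_i = l_2 \cdots \hat{l_i} \cdots l_{m+n-1}$ for $2 \leq i \leq m+n-1$ and each $q_{j,k} = (l_1 \cdots \hat{l_j} \cdots l_n)(l_{n+1} \cdots \hat{l_k} \cdots l_{m+n-1})$ for $2 \leq j \leq n$ and $n+1 \leq k \leq m+n-1$ divides $\big\vert F_1 \text{ } F_2 \text{ } \ldots \text{ } F_{m+n-2} \big\vert$. This is exactly the content of Lemma~\ref{dmnlem}, so no new divisibility argument is needed here.

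Next I would appeal to the GCD-domain hypothesis on $R$. In a GCD domain the least common multiple of any finite collection of elements exists, and it enjoys the universal property that if every member of the collection divides a fixed element $d$, then their least common multiple divides $d$ as well. Taking $d = \big\vert F_1 \text{ } F_2 \text{ } \ldots \text{ } F_{m+n-2} \big\vert$ and the collection $\mathscr{P} \cup \mathscr{Q}$, the first step shows each member divides $d$, whence $\big[ \mathscr{P} \cup \mathscr{Q} \big]$ divides the determinant.

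Finally I would rewrite $\big[ \mathscr{P} \cup \mathscr{Q} \big]$ as $Q_{D_{m,n}}$ using Lemma~\ref{dmnlem2}, which establishes precisely this identity. Substituting yields that $Q_{D_{m,n}}$ divides $\big\vert F_1 \text{ } F_2 \text{ } \ldots \text{ } F_{m+n-2} \big\vert$, completing the argument. I do not expect any real obstacle: the substantive work — the individual divisibilities and the closed-form evaluation of the least common multiple — has already been absorbed into Lemmas~\ref{dmnlem} and \ref{dmnlem2}. The only ingredient beyond those lemmas is the elementary remark that a least common multiple of divisors of $d$ is again a divisor of $d$, which holds in every GCD domain, so the corollary follows formally.
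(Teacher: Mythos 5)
Your proposal is correct and matches the paper's own (implicit) argument exactly: the paper also combines Lemma~\ref{dmnlem} with the universal property of least common multiples in a GCD domain to get that $\big[ \mathscr{P} \cup \mathscr{Q} \big]$ divides the determinant, and then identifies this least common multiple with $Q_{D_{m,n}}$ via Lemma~\ref{dmnlem2}. Nothing is missing; the corollary is indeed a formal consequence of those two lemmas.
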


We give the final result of this section below.

\begin{theo} Let $(D_{m,n} , \alpha)$ be as in Figure~\ref{dmn}. Then $\{ F_1 , \ldots , F_{m+n-2} \} \subset R_{(D_{m,n} , \alpha)}$ forms a basis for $R_{(D_{m,n} , \alpha)}$ if and only if $\big\vert F_1 \text{ } F_2 \text{ } \ldots \text{ } F_{m+n-2} \big\vert = r \cdot Q_{D_{m,n}}$ where $r \in R$ is a unit.
\label{dmntheo}
\end{theo}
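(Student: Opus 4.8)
The plan is to prove the two implications separately, mirroring the structure of the $D_{3,3}$ case in Lemma~\ref{cycd331} and Theorem~\ref{d33theo}. For the direction ``$\big\vert F_1 \ \ldots \ F_{m+n-2}\big\vert = r\cdot Q_{D_{m,n}}$ with $r$ a unit $\Rightarrow$ basis'', I would observe that the argument of Lemma~\ref{cycd331} uses no feature of $D_{3,3}$ beyond two facts: that $Q_G$ divides every $(m+n-2)$-spline determinant, and that $r$ is a unit. Since Corollary~\ref{dmncor2} supplies $Q_{D_{m,n}}\mid \big\vert G_1\ \ldots\ G_{m+n-2}\big\vert$ for arbitrary splines, the same Cramer's-rule computation shows $Q_{D_{m,n}}R^{m+n-2}\subseteq\langle F_1,\ldots,F_{m+n-2}\rangle$, and then for any spline $F$ one writes $Q_{D_{m,n}}F=\sum r_iF_i$ and deduces $Q_{D_{m,n}}\mid r_i$ exactly as before. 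So this direction is immediate and requires no new construction.

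The substance is the converse. Assuming $\{F_1,\ldots,F_{m+n-2}\}$ is a basis, Corollary~\ref{dmncor2} gives $\big\vert F_1\ \ldots\ F_{m+n-2}\big\vert = r\cdot Q_{D_{m,n}}$ for some $r\in R$, and it remains to show $r$ is a unit. I would set $d_1=(l_2,\ldots,l_n)$ and $d_2=(l_{n+1},\ldots,l_{m+n-1})$, and write $l_i=d_1 l'_i$ for $2\le i\le n$ and $l_i=d_2 l'_i$ for $n+1\le i\le m+n-1$, so that $(l'_2,\ldots,l'_n)=1$ and $(l'_{n+1},\ldots,l'_{m+n-1})=1$. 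For each pair $(j,k)$ with $2\le j\le n$ and $n+1\le k\le m+n-1$ I would construct an explicit matrix $A_{j,k}$ of $m+n-2$ splines whose determinant is $Q_{D_{m,n}}\cdot l'_j l'_k$, generalizing the four matrices $A_1,\ldots,A_4$ used for $D_{3,3}$. The columns of $A_{j,k}$ are the all-ones spline $(1,\ldots,1)$; the flow-up splines supported on each of the two cycles carrying the leading factors $\big[l_i,(l_{i+1},\ldots,l_n)\big]$ (right cycle) and $\big[l_i,(l_{i+1},\ldots,l_{m+n-1})\big]$ (left cycle); together with a single column realizing the shared-vertex factor $\big[l_1,(l_2,\ldots,l_n),(l_{n+1},\ldots,l_{m+n-1})\big]$ scaled by $l'_j l'_k$, the scaling being exactly what keeps that column a spline. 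Triangulating $A_{j,k}$ against the factorization
\[
Q_{D_{m,n}}=\big[l_1,(l_2,\ldots,l_n),(l_{n+1},\ldots,l_{m+n-1})\big]\cdot\prod_{i=2}^{n-1}\big[l_i,(l_{i+1},\ldots,l_n)\big]\cdot\prod_{i=n+1}^{m+n-2}\big[l_i,(l_{i+1},\ldots,l_{m+n-1})\big]
\]
then yields $\big\vert A_{j,k}\big\vert=Q_{D_{m,n}}\cdot l'_j l'_k$.

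Finally, since $\{F_i\}$ is a basis, Proposition~\ref{detprop2} gives $r\cdot Q_{D_{m,n}}\mid \big\vert A_{j,k}\big\vert = Q_{D_{m,n}}\cdot l'_j l'_k$, hence $r\mid l'_j l'_k$ for every admissible pair $(j,k)$. Therefore $r$ divides their greatest common divisor, and since $(l'_{n+1},\ldots,l'_{m+n-1})=1$,
\[
\big(\, l'_j l'_k : 2\le j\le n,\ n+1\le k\le m+n-1 \,\big)
= \big(\, l'_j\,(l'_{n+1},\ldots,l'_{m+n-1}) : 2\le j\le n \,\big)
= (l'_2,\ldots,l'_n) = 1,
\]
so $r$ is a unit, as claimed. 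I expect the main obstacle to be precisely the explicit construction of the spline families $A_{j,k}$ for general $m,n$: one must write the entries so that every column genuinely satisfies the edge conditions on both cycles simultaneously and so that the matrix upper-triangularizes to exactly the product above. This bookkeeping is transparent for $D_{3,3}$, where only four matrices appear, but it demands care to state uniformly as $m$ and $n$ vary and to verify that the single $l'_j l'_k$-scaled column remains a valid spline across the shared edge $l_1$.
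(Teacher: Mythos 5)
Your first direction (unit determinant implies basis) is correct and is exactly the paper's argument: Cramer's rule plus Corollary~\ref{dmncor2}, as in Lemma~\ref{cycd331}. The gap is in the converse, and it is not the bookkeeping issue you flag at the end but a genuine obstruction: the unscaled columns you want, flow-up splines whose leading entries are $\big[l_i,(l_{i+1},\ldots,l_n)\big]$ for $2\le i\le n-2$, need not exist over a GCD domain. If a spline vanishes at $v_1,\ldots,v_i$ and takes the value $x$ at $v_{i+1}$, then telescoping the edge conditions along the path $v_{i+1},v_{i+2},\ldots,v_n,v_1$ forces $x\in(l_{i+1})+(l_{i+2})+\cdots+(l_n)$; in a GCD domain that is not B\'ezout this ideal sum is in general strictly smaller than the set of multiples of the gcd $(l_{i+1},\ldots,l_n)$, so the value $x=\big[l_i,(l_{i+1},\ldots,l_n)\big]$ can be unattainable. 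Concretely, in $R=k[u,v]$ with $n=4$, $l_2=1+u$, $l_3=u$, $l_4=v$, one has $\big[l_2,(l_3,l_4)\big]=[1+u,1]=1+u$, but a spline equal to $1+u$ at $v_3$ and zero at $v_1,v_2$ would require $1+u\in(u)+(v)$, which is false. This is precisely the failure of minimal flow-up classes outside PIDs that the paper repeatedly warns about. Note the trap: in $D_{3,3}$ each cycle contributes only its terminal factor $[l_2,l_3]$ or $[l_4,l_5]$, which are honest least common multiples and do give splines, so your scheme does reproduce Theorem~\ref{d33theo}; the intermediate factors, where it breaks, appear only once $m$ or $n$ is at least $4$.

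The paper's proof therefore scales every nontrivial column, not just the shared-vertex one: its level-$t$ column is constant, equal to $[l_t,d_t]\,l^{(t)}_j$ where $d_t=(l_{t+1},\ldots,l_n)$ and $l_j=d_t\,l^{(t)}_j$, on a segment $v_{t+1},\ldots,v_j$, so that both boundary edges $l_t$ and $l_j$ divide the constant value and the column genuinely is a spline. The price is that the determinant comes out as $Q_{D_{m,n}}\cdot b_1\cdots b_{n-2}\cdot b_n\cdots b_{m+n-3}$ with one scaling factor $b_t\in B_t$ at every level $t$, not as your $Q_{D_{m,n}}\cdot l'_j l'_k$; the conclusion that $r$ is a unit then rests on the identity $\big(B_1\cdots B_{n-2}\cdot B_n\cdots B_{m+n-3}\big)=\big(B_1\big)\cdots\big(B_{n-2}\big)\cdot\big(B_n\big)\cdots\big(B_{m+n-3}\big)=1$, taken over all choice sequences and using the local gcds $d_t$ at every level, rather than only the two global ones $d_1=(l_2,\ldots,l_n)$ and $d_2=(l_{n+1},\ldots,l_{m+n-1})$ appearing in your argument. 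Your closing gcd computation is correct as far as it goes, but the matrices $A_{j,k}$ it is applied to cannot be constructed in general.
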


\begin{proof} The second part of the theorem can be shown by similar techniques in the proof of Lemma~\ref{cycd331}. For the proof of the first part, we assume that  $\{ F_1 , \ldots , F_{m+n-2} \}$ forms a basis for $R_{(D_{m,n} , \alpha)}$. By Corollary~\ref{dmncor2}, $Q_{D_{m,n}}$ divides $\big\vert F_1 \text{ } F_2 \text{ } \ldots \text{ } F_{m+n-2} \big\vert$, say $\big\vert F_1 \text{ } F_2 \text{ } \ldots \text{ } F_{m+n-2} \big\vert = r \cdot Q_{D_{m,n}}$ for some $r \in R$. We will show that $r$ is a unit.

First fix the following notations:
\begin{displaymath}
d_i = 
\begin{cases}
(l_{i+1} , \ldots , l_n), & \text{ for } i = 1,2,\ldots,n-2 \\
(l_{i+1} , \ldots , l_{m+n-1}), & \text{ for } i = n,\ldots,m+n-3.
\end{cases}
\end{displaymath}
Here for any $i \in \{ 1,\ldots,n-2\}$, we say $l_j = d_i l^{(i)} _j$ if $l_j \in \{ l_{i+1} , \ldots , l_n \}$. Similarly we say $l_j = d_i l^{(i)} _j$ if $l_j \in \{ l_{i+1} , \ldots , l_{m+n-1} \}$ for any $i \in \{ n,\ldots,m+n-3\}$. For a fixed $i \in \{ 1,\ldots,n-2\}$, it is obvious that $\big( l^{(i)} _{i+1} , \ldots , l^{(i)} _n \big) = 1$. Similarly $\big( l^{(i)} _{i+1} , \ldots , l^{(i)} _{m+n-1} \big) = 1$ for a fixed $i \in \{ n,\ldots,m+n-3\}$. 

Fix the following sets:
\begin{displaymath}
B_i =
\begin{cases} 
\left\{ l^{(i)} _j \text{ $\vert$ } j = i+1 , \ldots , n \right\}, & i = 1, \ldots , n-2 \\
\left\{ l^{(i)} _j \text{ $\vert$ } j = i+1 , \ldots , m+n-1 \right\}, & i = n , \ldots , m+n-3.
\end{cases}
\end{displaymath}
Notice that the greatest common divisor of all elements of each $B_i$ is equal to $1$. We will show that $r$ divides any product $b_1 \cdots b_{n-2} \cdot b_n \cdots b_{m+n-3}$ where $b_i \in B_i$. In order to do this, we construct sets $\mathbb{A}^t$ of column matrices for $t = 0,\ldots , n-1 , n+1 , \ldots , m+n-2$ with entries in order from bottom to top as follows. 

\begin{itemize}
\item For $t=0$, fix the following set:
\begin{displaymath}
\mathbb{A}^0 = \left\{ \begin{bmatrix} 1 \\ \vdots \\ 1 \end{bmatrix} \right\}.
\end{displaymath}
\item For $t = 1$, construct the following column matrices $A_{i,j}$ for $2 \leq i \leq n$ and $n+1 \leq j \leq m+n-1$
\begin{displaymath}
\big[ A_{i,j} \big]_k = 
\begin{cases} [l_1 , d_1 , d_n] l^{(1)} _i l^{(n)} _j, & 1 < k \leq i \text{ or } n < k \leq j-1 \\
0, & \text{otherwise} \end{cases}
\end{displaymath}
and fix the following set:
\begin{displaymath}
\mathbb{A}^1 = \left\{ A_{i,j} \text{ $\vert$ } 2 \leq i \leq n \text{ , } n+1 \leq j \leq m+n-2 \right\}.
\end{displaymath}
\item For a fixed $t$ with $2 \leq t \leq n-2$, define the entries of the columns $A^t _j$ for $j = t+1, \ldots , n$ as follows
\begin{displaymath}
\big[ A^t _j \big]_k = 
\begin{cases} [l_t , d_t] l^{(t)} _j, & t < k \leq j\\
0, & \text{otherwise} \end{cases}
\end{displaymath}
and fix the following set:
\begin{displaymath}
\mathbb{A}^t = \left\{ A^t _j \text{ $\vert$ } j = 3, \ldots , n \right\}.
\end{displaymath}
\item For $t = n-1$, fix the following set:
\begin{displaymath}
\mathbb{A}^{n-1} = \left\{ \begin{gmatrix}[b]
 0 \\ \vdots \\ 0 \\ [ l_{n-1} , l_n ] \\ 0 \\ \vdots \\ 0
\rowops
\mult{3}{\leftarrow\text{$n$-th row}}
\end{gmatrix} \right\}.
\end{displaymath}
\item For a fixed $t$ with $n+1 \leq t \leq m+n-3$, define the entries of the columns $A^t _j$ for $j = t+1, \ldots , m+n-1$ as follows
\begin{displaymath}
\big[ A^t _j \big]_k = 
\begin{cases} [l_t , d_t] l^{(t)} _j, & t \leq k \leq j-1\\
0, & \text{otherwise} \end{cases}
\end{displaymath}
and fix the following set:
\begin{displaymath}
\mathbb{A}^t = \left\{ A^t _j \text{ $\vert$ } j = n+1, \ldots , m+n-1 \right\}.
\end{displaymath}
\item For $t = m+n-2$, fix the following set:
\begin{displaymath}
\mathbb{A}^{m+n-2} = \left\{ \begin{bmatrix}
 [l_{m+n-2} , l_{m+n-1}] \\ 0 \\ \vdots \\ 0
\end{bmatrix} \right\}.
\end{displaymath}
\end{itemize}

\noindent
It is easy to see that each element of the sets $\mathbb{A}^0 , \mathbb{A}^1 , \ldots , \mathbb{A}^{n-1} , \mathbb{A}^{n+1} , \ldots , \mathbb{A}^{m+n-2}$ is also an element of $R_{(D_{m,n} , \alpha)}$. Consider the matrix $A$ whose columns come from these sets as below.
\begin{displaymath}
A = 
\begin{bmatrix}
A_0 & A_1 & \ldots & A_{n-1} & A_{n+1} & \ldots & A_{m+n-2}
\end{bmatrix}
\end{displaymath}
where $A_i \in \mathbb{A}^i$. Notice that $A$ is an upper triangular matrix and for each element $b_1 \cdots b_{n-2} \cdot b_n \cdots b_{m+n-3}$ where $b_i \in B_i$, we can construct $A$ such that 
\begin{displaymath}
\big\vert A \big\vert = b_1 \cdots b_{n-2} \cdot b_n \cdots b_{m+n-3} \cdot Q_{D_{m,n}}
\end{displaymath}
by choosing suitable elements from the sets $\mathbb{A}^i$. By Proposition~\ref{detprop2}, $r \cdot Q_{D_{m,n}} = \big\vert F_1 \text{ } F_2 \text{ } \ldots \text{ } F_{m+n-2} \big\vert$ divides $\big\vert A \big\vert$ and we conclude that $r$ divides all elements of the set $B_1 \cdots B_{n-2} \cdot B_n \cdots B_{m+n-3}$. Hence
\begin{displaymath}
r \text{ $\Big\vert$ }  \big( B_1 \cdots B_{n-2} \cdot B_n \cdots B_{m+n-3} \big) = \big( B_1 \big) \cdots \big( B_{n-2} \big) \cdot \big( B_n \big) \cdots \big( B_{m+n-3} \big) \\ = 1.
\end{displaymath}
and $r$ is a unit.
\end{proof}

We also believe that the statement of Theorem~\ref{dmntheo} can be easily extended to the graphs that consist of $n$ cycles sharing a common edge or a common tree similarly in our work.

\subsection{Determinant of Splines on Trees}

In this section we give basis criteria for spline modules on trees by using determinant. We already know that spline modules on trees have a free module structure. Moreover, they have flow-up bases. So we can use flow-up bases to give basis criteria. We formularize $Q_G$ for trees as follows. 

\begin{lem} Let $G$ be a tree with $n$ vertices and $k$ edges. Then
\begin{displaymath}
Q_G = l_1 \cdots l_k.
\end{displaymath}
\end{lem}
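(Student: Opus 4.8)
The plan is to compute $Q_G$ directly from its definition in terms of zero trials, exploiting the fact that a tree is acyclic so that each vertex admits exactly one zero trial. First I would fix a convenient ordering of the vertices. We may compute $Q_G$ using a \emph{connected ordering}, that is, an ordering $v_1, \ldots, v_n$ for which every initial segment $\{v_1, \ldots, v_j\}$ induces a connected subtree; such an ordering always exists (for instance, read off a breadth first search from $v_1$, or reverse the order in which leaves are successively removed). If one worries about the dependence of $Q_G$ on the ordering, Lemma~\ref{reorderlem} together with Corollary~\ref{detcor3} shows that the associated determinant, and hence $Q_G$, is independent of the ordering up to a unit, while the claimed value $l_1 \cdots l_k$ is manifestly order independent. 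With a connected ordering each vertex $v_i$ with $i \geq 2$ has a \emph{unique} neighbor $v_{p(i)}$ of smaller index; I will write $\ell_i$ for the label of the edge $\{v_i, v_{p(i)}\}$.

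The next step is the combinatorial heart: identifying the zero trials of $v_i$. Following the notation of~\cite{Alt}, a zero trial of $v_i$ is a path from $v_i$ to a lower indexed (hence zero labeled) vertex whose intermediate vertices all carry higher indices, and its contribution $\mathbf{p}_t^{(i,0)}$ is the greatest common divisor of the edge labels traversed. I would argue that in a tree the only such path is the single edge $\{v_i, v_{p(i)}\}$. Indeed, deleting this edge splits $G$ into two subtrees $T_{p(i)} \ni v_1$ and $T_i \ni v_i$; since $\{v_1, \ldots, v_{i-1}\}$ is connected and the unique tree path from any of its vertices to $v_{p(i)}$ would otherwise have to pass through $v_i$, every vertex of index $< i$ must lie in $T_{p(i)}$. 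Consequently any path leaving $v_i$ toward a child stays inside $T_i$, meets only higher indexed (non-zero) vertices, and never reaches a zero vertex, while the only edge crossing to a zero vertex is $\{v_i, v_{p(i)}\}$ itself. Hence $m_i = 1$ and the unique zero trial value is $\ell_i$, so the $i$-th factor of $Q_G$ is $\big[\{\ell_i\}\big] = \ell_i$.

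Finally I would assemble the product. As $i$ ranges over $2, \ldots, n$, the assignment $i \mapsto \{v_i, v_{p(i)}\}$ is a bijection onto the edge set of the tree, since each edge is the parent edge of exactly one endpoint, namely the one of larger index; thus $\{\ell_2, \ldots, \ell_n\}$ is precisely the multiset $\{l_1, \ldots, l_k\}$. Therefore
\begin{displaymath}
Q_G = \prod_{i=2}^n \big[\{\ell_i\}\big] = \prod_{i=2}^n \ell_i = l_1 \cdots l_k,
\end{displaymath}
using $k = n-1$ for a tree. As an independent check, the flow-up classes realizing these smallest leading entries (take $F^{(i)}$ equal to $0$ on $T_{p(i)}$ and to $\ell_i$ on $T_i$) form a flow-up basis whose matrix is triangular with diagonal $1, \ell_2, \ldots, \ell_n$, so its determinant is again $l_1 \cdots l_k$, matching $Q_G$ up to a unit by Corollary~\ref{detcor3}.

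The main obstacle I anticipate is pinning down the exact meaning of the zero trial data $\mathbf{p}_t^{(i,0)}$ from~\cite{Alt} and verifying rigorously that acyclicity forces $m_i = 1$; once the connected ordering is in place this reduces to the elementary subtree-splitting argument above, after which the remaining computation is routine.
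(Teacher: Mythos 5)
Your computation for a connected ordering is correct, and the subtree-splitting argument showing that each $v_i$ with $i \geq 2$ then has exactly one zero trial, namely its parent edge, is sound; for that case your argument is if anything cleaner than the paper's. The genuine gap is the reduction of the general statement to this case. The element $Q_G$ is defined in terms of zero trials, which depend on the given vertex ordering, and the lemma is a statement about an arbitrary ordering: this is exactly why the paper's own proof spends all of its effort on zero trials that are \emph{not} single edges, a phenomenon that occurs only when some initial segment $\{v_1,\ldots,v_{i-1}\}$ is disconnected. Your justification for ordering-independence --- that Lemma~\ref{reorderlem} and Corollary~\ref{detcor3} make ``the associated determinant, and hence $Q_G$,'' independent of the ordering --- is circular. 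Those two results compare determinants of \emph{bases} of the spline module; nothing available at this point in the paper identifies the combinatorially defined $Q_G$ with the determinant of any basis when the ordering is not connected. For trees that identification is precisely Theorem~\ref{ttheo}, which is proved \emph{after} this lemma and by means of it.

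The gap is substantive rather than cosmetic, because for a non-connected ordering the zero-trial values need not be realizable by any splines at all, so no determinant argument can replace the computation. Take the path on three vertices over $R=k[x,y]$, ordered so that the two leaves are $v_1$ and $v_2$ and the middle vertex is $v_3$, with edge labels $l_1=x$ on $v_1v_3$ and $l_2=y$ on $v_2v_3$. The unique zero trial of $v_2$ is the two-edge path through $v_3$, with value $(x,y)=1$, but every flow-up spline $(0,f_2,f_3)$ satisfies $f_3\in xR$ and $f_2-f_3\in yR$, so $f_2$ lies in the ideal $xR+yR$ and can never be a unit; hence no flow-up basis realizes the zero-trial data for this ordering. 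The lemma still holds here, since $Q_G=(x,y)\cdot[x,y]=xy=l_1l_2$, but only because the gcd contributed by the non-edge zero trial of $v_2$ cancels against the denominator of the lcm $[x,y]$ contributed by $v_3$. This cancellation --- the gcd value of a non-edge zero trial of $v_i$ reappearing as the gcd of the two zero trials of the minimal intermediate vertex obtained by splitting the path there, and cancelling against the denominator of their lcm --- is exactly what the paper proves in general, and it (or some equivalent direct computation valid for arbitrary orderings) is what your proof is missing.
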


\begin{proof} Fix a vertex $v_i$ and consider the following zero trial $\textbf{p}^{(i,0)}$ which is not an edge:

\begin{figure}[H]
\begin{center}
\scalebox{0.16}{\includegraphics{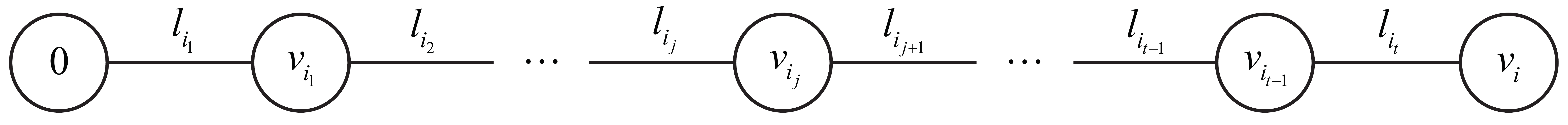}}
\caption{Zero trial of $v_i$}
\label{path}
\end{center}
\end{figure}

\noindent
Here notice that $i \leq i_1 , \ldots , i_{t-1}$. We will show that the greatest common divisor of such zero trials cancels in the product 
\begin{displaymath}
Q_G = \prod\limits_{i=2} ^k \big[ \big\{ \big( \textbf{p}_t ^{(i,0)} \big) \text{ $\big\vert$ } t = 1,\ldots, m_i \big\} \big]
\end{displaymath}
where $m_i$ is the number of the zero trials of $v_i$. In order to see this, let $i_j = \text{ min } \{ i_1 , \ldots , i_{t-1} \}$. Then $l_{i_1} \ldots l_{i_j}$ and $l_{i_{j+1}} \ldots l_{i_{t}}$ are two zero trials of $v_{i_j}$ and we have
\begin{displaymath}
\begin{array}{ccl}
Q_G &=& \Big[ \big( l_{i_1} \ldots l_{i_t} \big) , \big[ \big\{ \big( \text{other zero trials of } v_i \big) \big\} \big] \Big] \cdot \vspace{.2cm} \\
&& \cdot \Big[ \big[ \big(l_{i_1} \ldots l_{i_j} \big) , \big( l_{i_{j+1}} \ldots l_{i_{t}} \big) \big] , \big[ \big\{ \big( \text{other zero trials of } v_{i_j} \big) \big\} \big] \Big] \cdot \text{ (other least common multiples) } \vspace{.2cm} \\
&=& \big( l_{i_1} \ldots l_{i_t} \big) \cdot \text{ (other terms) } \cdot \big[ \big(l_{i_1} \ldots l_{i_j} \big) , \big( l_{i_{j+1}} \ldots l_{i_{t}} \big) \big] \cdot \text{ (other terms) } \vspace{.2cm} \\
&=& \big( l_{i_1} \ldots l_{i_t} \big) \cdot \dfrac{\big(l_{i_1} \ldots l_{i_j} \big) \big( l_{i_{j+1}} \ldots l_{i_{t}} \big)}{\big( l_{i_1} \ldots l_{i_t} \big)} \cdot \text{ (other terms) }
\end{array}
\end{displaymath}
Hence we conclude that the greatest common divisor of zero trials which are not an edge cancels in the product $Q_G$ and so $Q_G = l_1 \cdots l_k$.
\end{proof}

Basis criteria for spline modules on trees over GCD domains can be given as follows.

\begin{theo} Let $G$ be a tree with $n$ vertices and $k$ edges. Then $\{ F_1 , \ldots , F_n \} \subset R_{(G , \alpha)}$ forms a basis for $R_{(G , \alpha)}$ if and only if $\big\vert F_1 \text{ } F_2 \text{ } \ldots \text{ } F_n \big\vert = r \cdot Q_G$ where $r \in R$ is a unit.
\label{ttheo}
\end{theo}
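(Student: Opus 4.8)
The plan is to prove the two directions of the theorem separately, exploiting the fact—stated in the introduction to this subsection—that spline modules on trees are free and, crucially, admit flow-up bases. This is the key structural advantage over the cycle and diamond cases, and it lets me sidestep the elaborate matrix-construction arguments used in Theorems~\ref{cycthm} and~\ref{dmntheo}.

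For the forward direction, suppose $\{F_1,\ldots,F_n\}$ is a basis for $R_{(G,\alpha)}$. Since trees admit flow-up bases, fix a flow-up basis $\{B^{(1)},\ldots,B^{(n)}\}$ with $B^{(1)}=(1,\ldots,1)$. The matrix whose columns are the $B^{(i)}$ is upper triangular (in the bottom-to-top ordering), so its determinant is the product of the leading entries $B^{(i)}_i$. For a tree, each vertex $v_i$ with $i\geq 2$ has a unique edge connecting it toward the root-side of the ordering, and the smallest leading entry of a flow-up class at $v_i$ is exactly that edge label; hence $\bigl\vert B^{(1)}\ \cdots\ B^{(n)}\bigr\vert = \pm\, l_1\cdots l_k = \pm\, Q_G$. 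Since both $\{F_i\}$ and $\{B^{(i)}\}$ are bases, Corollary~\ref{detcor3} gives $\bigl\vert F_1\ \cdots\ F_n\bigr\vert = r\cdot Q_G$ for a unit $r$, which is exactly the claim.

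For the converse, suppose $\bigl\vert F_1\ \cdots\ F_n\bigr\vert = r\cdot Q_G$ with $r$ a unit. The determinant being nonzero forces $\{F_1,\ldots,F_n\}$ to be linearly independent. To see it spans, I would run the same argument as in Lemma~\ref{cycd331}: first show $Q_G\, R^n \subseteq \langle F_1,\ldots,F_n\rangle$ via Cramer's rule (each coefficient $a_i$ picks up a factor of $Q_G$ in the numerator that cancels the determinant $Q_G$ in the denominator, leaving an element of $R$), and then for an arbitrary spline $F$ write $Q_G F = \sum r_i F_i$, substitute into the determinant, and use that $r$ is a unit to conclude $Q_G \mid r_i$ for each $i$, so that $F = \sum (r_i/Q_G)F_i \in \langle F_1,\ldots,F_n\rangle$. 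This shows $\{F_i\}$ is a spanning, linearly independent set, hence a basis.

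The main obstacle is the forward direction, specifically the computation that the determinant of a flow-up basis equals $\pm Q_G$. This requires knowing that the minimal leading entry of a flow-up class at $v_i$ is precisely the label of the unique edge joining $v_i$ to a lower-indexed vertex—a fact that relies on the tree structure (no cycles means each vertex imposes exactly one congruence relative to the already-assigned values, and the constraint is satisfiable with leading entry equal to that single edge label). I would need to justify this minimality carefully, presumably by appeal to the existing flow-up basis theory for trees cited at the start of the subsection, and then invoke the preceding lemma identifying $Q_G$ with $l_1\cdots l_k$ to match the two expressions. The converse, by contrast, is essentially a verbatim repetition of the Cramer's-rule argument already carried out for $D_{3,3}$, so it presents no new difficulty.
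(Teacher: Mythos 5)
Your overall route is the same as the paper's: for the forward direction, use the existence of a flow-up basis on a tree, compute its determinant, and invoke Corollary~\ref{detcor3}; for the converse, repeat the Cramer's-rule argument of Lemma~\ref{cycd331}. The converse half of your proposal is fine and is exactly what the paper does.

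The gap is in the step you yourself flagged, and it is more serious than you suggest. For an arbitrary indexing of the vertices of a tree it is simply false that each $v_i$ with $i \geq 2$ has a unique edge to a lower-indexed vertex: take the path $v_1 - v_3 - v_2$ with $\alpha(v_1v_3) = l_1$ and $\alpha(v_3v_2) = l_2$. Here $v_2$ has no lower-indexed neighbor, and $v_3$ has two. The attainable leading entries of flow-up classes are then not single edge labels: $F^{(3)}_3$ must lie in $(l_1) \cap (l_2)$, so the best possible leading entry at $v_3$ is $[l_1,l_2]$, while the best conceivable leading entry at $v_2$ is the greatest common divisor $(l_1,l_2)$ --- and over a GCD domain that is not a PID (say $R = k[x,y]$ with $l_1 = x$, $l_2 = y$) no flow-up class attains it, since every attainable value of $F^{(2)}_2$ lies in the ideal generated by $x$ and $y$, which does not contain $1$. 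Indeed, with this ordering $R_{(G,\alpha)}$ has no flow-up basis at all, even though it is free; so an appeal to ``existing flow-up basis theory'' cannot close the gap for an arbitrary ordering. The repair is available in the paper's toolkit: first reorder the vertices by Lemma~\ref{reorder} so that every $v_i$ with $i \geq 2$ has exactly one neighbor $v_j$ with $j < i$ (every tree admits such an ordering, e.g.\ by growing the tree outward from a root). In that ordering your argument is correct --- the flow-up class $F^{(i)}$ can be taken to be the parent edge label on the subtree hanging at $v_i$ and zero elsewhere, the flow-up matrix is triangular with each edge label occurring exactly once on the diagonal, so the determinant is $\pm\, l_1 \cdots l_k = \pm\, Q_G$ --- and then Lemma~\ref{reorderlem} transports the conclusion back to the original ordering, since reordering permutes rows and changes the determinant only by a sign.
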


\begin{proof} The second part of the theorem can be proved exactly by the same method in the proof of Lemma~\ref{cycd331}. For the proof of the first part, we assume that  $\{ F_1 , \ldots , F_n \}$ forms a basis for $R_{(G , \alpha)}$. There exists a flow-up basis $\{ G_1 , \ldots , G_n \}$ for $R_{(G , \alpha)}$ since $(G , \alpha)$ is a tree. It can be seen easily that $\big\vert G_1 \text{ } G_2 \text{ } \ldots \text{ } G_n \big\vert = Q_G$. Hence $\big\vert F_1 \text{ } F_2 \text{ } \ldots \text{ } F_n \big\vert = r \cdot Q_G$ by Corollary~\ref{detcor3}, where $r \in R$ is a unit.
\end{proof}

We can also give basis criteria for graphs obtained by joining cycles, diamonds and trees together along common vertices. 

\begin{cor} Let $\{ G_1 , \ldots , G_k \}$ be a collection of cycles, diamond graphs and trees and let $G$ be a graph obtained by joining $G_1 , \ldots ,G_k$ together along common vertices which are cut vertices in $G$. Then $\{ F_1 , \ldots , F_n \} \in R_{(G,\alpha)}$ forms a basis for $R_{(G , \alpha)}$ if and only if $\big\vert F_1 \text{ } F_2 \text{ } \ldots \text{ } F_n \big\vert = r \cdot Q_{G_1} \cdots Q_{G_k}$ where $r \in R$ is a unit.
\end{cor}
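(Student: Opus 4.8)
The plan is to reduce the entire statement to the construction of a \emph{single} convenient basis $B=\{B_1,\dots,B_n\}$ of $R_{(G,\alpha)}$ whose determinant is $\big\vert B_1\ \dots\ B_n\big\vert = u\cdot Q_{G_1}\cdots Q_{G_k}$ for some unit $u$. Once such a $B$ is available both implications are immediate. If $\{F_1,\dots,F_n\}$ is a basis, then Corollary~\ref{detcor3} gives $\big\vert F_1\ \dots\ F_n\big\vert = r\cdot\big\vert B_1\ \dots\ B_n\big\vert = (ru)\,Q_{G_1}\cdots Q_{G_k}$ with $ru$ a unit. Conversely, if $\big\vert F_1\ \dots\ F_n\big\vert = r\,Q_{G_1}\cdots Q_{G_k}$ with $r$ a unit, then writing each $F_i=\sum_j c_{ij}B_j$ (possible since $B$ generates $R_{(G,\alpha)}$) yields $\big\vert F_1\ \dots\ F_n\big\vert=\det(c_{ij})\cdot\big\vert B_1\ \dots\ B_n\big\vert$; since $R$ is a domain and $Q_{G_1}\cdots Q_{G_k}\neq 0$, cancelling shows $\det(c_{ij})$ is a unit, so $(c_{ij})$ is invertible over $R$ and $\{F_i\}$ is again a basis. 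Throughout I would use Lemma~\ref{reorderlem} to reorder vertices freely, as this changes determinants only by units.

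I would build $B$ by induction on the number $k$ of blocks. For $k=1$ the graph is a single cycle, diamond, or tree, and a basis with determinant $u\,Q_{G_1}$ exists by Theorem~\ref{cycthm}, Theorem~\ref{d33theo2}, Theorem~\ref{dmntheo}, or Theorem~\ref{ttheo}. For the inductive step, the cut-vertex hypothesis forces the blocks to be assembled in a tree-like pattern, so I may detach one block, say $G_k$, that meets the rest $G'=G_1\cup\cdots\cup G_{k-1}$ in a single cut vertex $v$. Since $G'$ and $G_k$ share only the vertex $v$ and no edge, the edge conditions decouple and
\begin{displaymath}
R_{(G,\alpha)}=\big\{(F',F'')\in R_{(G',\alpha)}\times R_{(G_k,\alpha)}\ \big\vert\ F'(v)=F''(v)\big\}.
\end{displaymath}

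The construction of $B$ from this fiber product is the core of the argument. For each $H\in\{G',G_k\}$ I would select a basis of $R_{(H,\alpha)}$ of the special shape $\{\mathbf{1}_H, C^{(H)}_2,\dots\}$, where $\mathbf{1}_H$ is the all-ones spline and every other basis element vanishes at $v$; for $G'$ this basis is supplied by the induction hypothesis (combined with Corollary~\ref{detcor3}, so its determinant is still $u'\,Q_{G_1}\cdots Q_{G_{k-1}}$), and for $G_k$ by the relevant block theorem. Extending each $v$-vanishing basis spline of $G'$ by zero over $G_k$ and vice versa, and adjoining the global all-ones spline $\mathbf{1}_G$, produces $1+(|V(G')|-1)+(|V(G_k)|-1)=n$ splines on $G$; these are $B$. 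Ordering the vertices as $v$, then $V(G')\setminus\{v\}$, then $V(G_k)\setminus\{v\}$, the matrix of $B$ is block triangular with a leading $1$ in the $v$-row and two diagonal blocks $M'$ and $M_k$ recording the $v$-vanishing bases, whence
\begin{displaymath}
\big\vert B_1\ \dots\ B_n\big\vert=\det(M')\cdot\det(M_k)=u'\,Q_{G_1}\cdots Q_{G_{k-1}}\cdot u''\,Q_{G_k}=u\,Q_{G_1}\cdots Q_{G_k},
\end{displaymath}
completing the induction.

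The step I expect to be the main obstacle is the selection, for the cycle and diamond blocks, of a basis in which all but one element vanishes at the cut vertex $v$; equivalently, the freeness of the submodule $\ker(\mathrm{ev}_v)$ of splines vanishing at $v$. For a tree this is immediate: rooting the flow-up basis at $v$ makes every non-constant basis element vanish there. For a cycle or diamond I would argue from the split short exact sequence $0\to\ker(\mathrm{ev}_v)\to R_{(H,\alpha)}\to(d)\to 0$, where the image $\mathrm{ev}_v\big(R_{(H,\alpha)}\big)=(d)$ is principal because $R$ is a GCD domain; the splitting presents $\ker(\mathrm{ev}_v)$ as a direct summand of a free module, and the principal-ideal structure over a GCD domain is what must be leveraged to upgrade this to actual freeness and thereby extract the desired basis. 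This is the only place where the GCD hypothesis does genuine work beyond the block theorems, and it is where I would concentrate the technical effort.
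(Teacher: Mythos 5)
Your overall architecture coincides with the paper's: the paper also proves this corollary by constructing one distinguished basis of $R_{(G,\alpha)}$ out of bases of the blocks (after reordering vertices so that each block's vertices are consecutive), reading the determinant off the resulting block-triangular matrix, and letting Corollary~\ref{detcor3} and a change-of-basis (or Cramer-type, as in Lemma~\ref{cycd331}) argument settle the two implications. You are in fact considerably more explicit than the paper, which gives only a brief sketch; your fiber-product decomposition at a cut vertex and the induction on the number of blocks are correct, and your reduction of both directions to the existence of a single basis $B$ with $\vert B_1 \ \ldots \ B_n\vert = u\, Q_{G_1}\cdots Q_{G_k}$ is sound. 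The problem is the step you yourself flag as the main obstacle: the existence, for a cycle or diamond block $H$ and a prescribed cut vertex $v$, of a basis of $R_{(H,\alpha)}$ of the shape $\{\mathbf{1}\} \cup \{\text{splines vanishing at } v\}$, equivalently the freeness of $\ker(\mathrm{ev}_v)$. This is a genuine gap, and the argument you sketch for it cannot work as stated.

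Concretely: the image of $\mathrm{ev}_v$ is not a proper principal ideal $(d)$ — since $\mathbf{1} \in R_{(H,\alpha)}$ and $\mathrm{ev}_v(\mathbf{1}) = 1$, the image is all of $R$, so the sequence $0 \to \ker(\mathrm{ev}_v) \to R_{(H,\alpha)} \to R \to 0$ splits trivially and yields only $\ker(\mathrm{ev}_v) \oplus R \cong R_{(H,\alpha)}$, i.e.\ that $\ker(\mathrm{ev}_v)$ is stably free, and even this presupposes that $R_{(H,\alpha)}$ is free at all, which the paper never establishes over a GCD domain (Theorems~\ref{cycthm}, \ref{d33theo2} and~\ref{dmntheo} are determinant criteria, not existence results, and the paper emphasizes that flow-up bases can fail outside PIDs). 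Over a GCD domain, stably free does not imply free: there are UFDs, e.g.\ the coordinate ring of the real $2$-sphere, admitting stably free non-free modules. So no amount of general module theory "leverages the GCD hypothesis" here; one would need a spline-specific construction, or one should bypass bases altogether and imitate the proofs of Theorems~\ref{cycthm} and~\ref{dmntheo}, which manufacture several explicit spline matrices whose extra determinant factors are coprime. A secondary issue: your induction hypothesis must be strengthened, since the determinant criterion for $G'$ alone does not yield a basis of $R_{(G',\alpha)}$ adapted to the new attachment vertex; you need the inductive statement to produce, for every vertex $w$ of $G'$, a basis $\{\mathbf{1}\} \cup \{w\text{-vanishing splines}\}$ of the right determinant. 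In fairness, the paper's own sketch silently assumes exactly what you could not prove — that each $R_{(G_i,\alpha_i)}$ has a basis whose elements "give" basis elements of $R_{(G,\alpha)}$, which (for the count $n = 1 + \sum_i (\vert V(G_i)\vert - 1)$ to come out right) forces all but one of them to vanish at the attaching cut vertex — so your proposal makes visible a gap that the published argument leaves hidden.
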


\begin{proof} We give a sketch of the proof. First reorder the vertices of $G$ such as the vertices on each $G_i$ are consecutively ordered except the least indiced vertex. Then each basis element of $R_{(G_i, \alpha_i)}$ for all $i$ gives a basis element of $R_{(G,\alpha)}$. Construct the matrix whose columns are the elements of the obtained basis for $R_{(G,\alpha)}$. Notice that the determinant of this matrix is equal to the product $r \cdot Q_{G_1} \cdots Q_{G_k}$.
\end{proof}

If $R$ is a PID then the existence of flow-up bases is guaranteed (See~\cite{Alt}). Hence we can give basis criteria for spline modules on arbitrary graphs over principal ideal domains as follows:

\begin{theo} Let $(G , \alpha)$ be an edge labeled graph with $n$ vertices and $R$ be a PID. Then $\{ F_1 , \ldots , F_n \} \subset R_{(G,\alpha)}$ forms a module basis for $R_{(G,\alpha)}$ if and only if $\big\vert F_1 \text{ } F_2 \text{ } \ldots \text{ } F_n \big\vert = r \cdot Q_G$ where $r \in R$ is a unit.
\end{theo}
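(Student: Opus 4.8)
The plan is to reduce the statement to the tree case, Theorem~\ref{ttheo}, because the single ingredient that its proof relies on—the existence of a flow-up basis—is available for \emph{any} graph once the base ring is a PID (see~\cite{Alt}). So I would first fix a flow-up basis $\{ G_1 , \ldots , G_n \}$ of $R_{(G,\alpha)}$, where $G_i = F^{(i)}$ has leading entry $F^{(i)}_i \neq 0$ and $F^{(i)}_j = 0$ for $j < i$. Arranging these splines as the columns of the spline matrix yields a triangular matrix, so that $\big\vert G_1 \text{ } G_2 \text{ } \ldots \text{ } G_n \big\vert$ equals the product of the leading entries $F^{(1)}_1 \cdots F^{(n)}_n$. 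The crux is to identify this product with $Q_G$: choosing the flow-up classes with smallest leading entries, each $F^{(i)}_i$ should equal the least common multiple of the zero trials of $v_i$, which is exactly the $i$-th factor in the definition of $Q_G$. Hence $\big\vert G_1 \text{ } G_2 \text{ } \ldots \text{ } G_n \big\vert = Q_G$.

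With this computation in hand, the forward direction (basis $\Rightarrow$ determinant) is immediate. If $\{ F_1 , \ldots , F_n \}$ is any basis for $R_{(G,\alpha)}$, then by Corollary~\ref{detcor3} its determinant and that of the flow-up basis differ by a unit, so $\big\vert F_1 \text{ } F_2 \text{ } \ldots \text{ } F_n \big\vert = r \cdot Q_G$ with $r \in R$ a unit.

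For the converse I would repeat the argument of Lemma~\ref{cycd331} almost verbatim, the point being that it never used any feature of $D_{3,3}$ beyond two facts that now hold in general. First, $Q_G$ divides $\big\vert G_1 \text{ } G_2 \text{ } \ldots \text{ } G_n \big\vert$ for every $\{ G_1 , \ldots , G_n \} \subset R_{(G,\alpha)}$; this follows from Proposition~\ref{detprop2} applied to the flow-up basis, whose determinant we have just shown to be $Q_G$. Second, starting from $\big\vert F_1 \text{ } F_2 \text{ } \ldots \text{ } F_n \big\vert = r \cdot Q_G$ with $r$ a unit, the nonvanishing of the determinant gives linear independence, Cramer's rule yields $Q_G \cdot R^n \subseteq \big\langle F_1 , \ldots , F_n \big\rangle$, and for an arbitrary spline $F$ the column-operation identity
\[
r_i \, r \cdot Q_G = \big\vert F_1 \text{ } \ldots \text{ } Q_G F \text{ } \ldots \text{ } F_n \big\vert = Q_G \cdot \big\vert F_1 \text{ } \ldots \text{ } F \text{ } \ldots \text{ } F_n \big\vert \in Q_G^2 R
\]
forces $Q_G \mid r_i$ for the coefficients in $Q_G F = \sum_i r_i F_i$, whence $F \in \big\langle F_1 , \ldots , F_n \big\rangle$ and $\{ F_1 , \ldots , F_n \}$ spans $R_{(G,\alpha)}$.

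The main obstacle is the step flagged in the first paragraph: proving in full generality that the determinant of a smallest flow-up basis equals $Q_G$. For trees this is transparent because the zero trials are paths and $Q_G = l_1 \cdots l_k$; for an arbitrary graph one must unwind the zero-trial definition of $Q_G$ from~\cite{Alt} and verify that the smallest leading entry of $F^{(i)}$ coincides with the least common multiple of the zero trials of $v_i$. Everything else is bookkeeping that transfers directly from Theorem~\ref{ttheo} and Lemma~\ref{cycd331}.
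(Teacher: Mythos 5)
Your proposal is correct and follows essentially the same route as the paper: the paper's proof likewise takes a flow-up basis $\{ G_1 , \ldots , G_n \}$ (which exists over a PID by~\cite{Alt}) whose determinant equals $Q_G$, deduces the forward direction from Corollary~\ref{detcor3}, and handles the converse by the Cramer's-rule argument of Lemma~\ref{cycd331}. The step you flag as the main obstacle—that the smallest flow-up leading entries are the least common multiples of the zero trials, so the triangular determinant is $Q_G$—is exactly what the paper asserts via~\cite{Alt} and the zero-trial definition of $Q_G$, so your extra care there is a matter of explicitness rather than a different argument.
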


\begin{proof} Since $R$ is a PID, there exists a flow-up basis $\{ G_1 , \ldots , G_n \}$ for $R_{(G,\alpha)}$ such that $\big\vert G_1 \text{ } G_2 \text{ } \ldots \text{ } G_n \big\vert = Q_G$. Assume that $\{ F_1 , \ldots , F_n \} \subset R_{(G,\alpha)}$ forms a module basis for $R_{(G,\alpha)}$. Hence $\big\vert F_1 \text{ } F_2 \text{ } \ldots \text{ } F_n \big\vert = r \big\vert G_1 \text{ } G_2 \text{ } \ldots \text{ } G_n \big\vert = r \cdot Q_G$ by Corollary~\ref{detcor3}, where $r \in R$ is a unit. The other part of the theorem can be proved by similar techniques in the proof of Lemma~\ref{cycd331}.
\end{proof}

The element $Q_G$ depends on the graph type. The complexity of the formula is related to the number of the cycles contained in $G$ and the base ring $R$. We believe that general basis criteria for spline modules on arbitrary graphs over GCD domains can be given by $Q_G$. We claim the following conjecture:

\begin{conj} Let $(G , \alpha)$ be any edge labeled graph with $n$ vertices. Then $\{ F_1 , \ldots , F_n \} \in R_{(G,\alpha)}$ forms a module basis for $R_{(G,\alpha)}$ if and only if $\big\vert F_1 \text{ } F_2 \text{ } \ldots \text{ } F_n \big\vert = r \cdot Q_G$ where $r \in R$ is a unit.
\end{conj}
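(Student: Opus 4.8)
The plan is to mirror the arguments already carried out for cycles, diamonds and trees, isolating the two general ingredients on which every one of those proofs rests. The first is a \emph{divisibility statement}: $Q_G$ divides $\big\vert F_1\ F_2\ \ldots\ F_n\big\vert$ for all $\{F_1,\ldots,F_n\}\subset R_{(G,\alpha)}$. The second is a \emph{witness family}: a finite collection of spline sets whose determinants equal $Q_G$ times factors that are, taken together, coprime. Granting these, both directions follow exactly as before. For the ``if'' direction one repeats the Cramer-rule bootstrap of Lemma~\ref{cycd331}: from $\big\vert F_1\ \ldots\ F_n\big\vert=r\cdot Q_G$ with $r$ a unit one first shows $Q_G\,R^n\subseteq\langle F_1,\ldots,F_n\rangle$ by solving $[\,F_1\ \ldots\ F_n\,]x=Q_G\,b$ and reading each coordinate off Cramer's rule as an element of $R$, and then, using the divisibility statement in the form $Q_G\mid\big\vert F_1\ \ldots\ F\ \ldots\ F_n\big\vert$, one shows every spline $F$ lies in $\langle F_1,\ldots,F_n\rangle$; linear independence is immediate since the determinant is nonzero. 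Conceptually this says that, once $R_{(G,\alpha)}$ is free---which the ``if'' argument in fact establishes en route---the determinant of a basis is its fundamental invariant, and the theorem amounts to identifying that invariant with $Q_G$ up to a unit.

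For the divisibility statement I would first prove the combinatorial identity
\[
Q_G=\Big[\ \textstyle\prod_{e\in T}l_e\ \big\vert\ T\ \text{a spanning tree of}\ G\ \Big],
\]
the least common multiple, over all spanning trees $T$ of $G$, of the product of the labels of the edges of $T$. This specializes correctly to every settled case: a tree is its own unique spanning tree, giving $Q_G=l_1\cdots l_k$; the $n$ spanning trees of a cycle are the paths $\hat{l}_i$, giving $Q_{C_n}$; and the spanning trees of $D_{m,n}$ are precisely the products $p_i$ and $q_{j,k}$ of Lemma~\ref{dmnlem}. It should follow from the zero-trial definition of $Q_G$ by the same telescoping of greatest common divisors used for trees and diamonds. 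With the identity in hand the divisibility is routine: along the edges of a fixed spanning tree $T$ one performs the row operations of Lemma~\ref{cyclem}, subtracting the row of each vertex from that of its parent, thereby extracting $\prod_{e\in T}l_e$ as a factor of the determinant; since each spanning-tree product divides $\big\vert F_1\ \ldots\ F_n\big\vert$ and $R$ is a GCD domain, so does their least common multiple $Q_G$. Applied to any basis this yields $\big\vert F_1\ \ldots\ F_n\big\vert=r\cdot Q_G$, and the ``only if'' direction reduces to showing $r$ is a unit.

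Here the witness family enters. Since a basis determinant divides every spline determinant by Proposition~\ref{detprop2}, each member of the witness family forces $r\cdot Q_G\mid Q_G\cdot u_\lambda$, hence $r\mid u_\lambda$; if the $u_\lambda$ have greatest common divisor $1$ then $r$ is a unit. The construction to aim for generalizes the matrices $A^{(i)}$ of Theorems~\ref{cycthm} and~\ref{d33theo} and the matrix $A$ of Theorem~\ref{dmntheo}: after dividing out the local greatest common divisors occurring in $Q_G$, one writes each label as $l_e=a_e\,l'_e$, attaches to each vertex and each of its zero trials a flow-up-type column whose leading entry is the corresponding local least common multiple scaled by a reduced label, assembles these columns into an upper-triangular spline matrix, and reads off its determinant as $Q_G$ times a monomial in the reduced labels $l'_e$. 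The coprimality of the reduced labels around each cycle is then what collapses the greatest common divisor of the $u_\lambda$ to $1$.

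I expect this last construction to be the genuine obstacle, and it is why the statement remains only a conjecture. For a single cycle the excess factors are $(l'_i)^{\,n-1}$ and for a diamond they are products $l'_jl'_k$, and in both the relevant reduced labels are coprime \emph{by the definition of the local gcd}, so coprimality is automatic. For an arbitrary graph, however, $Q_G$ is a least common multiple over exponentially many spanning trees, the zero trials at different vertices are entangled through the overlapping cycles of $G$, and there is no obvious canonical choice of reduced labels making all the excess monomials simultaneously coprime. Producing a witness family whose excess factors have greatest common divisor $1$---uniformly across all the independent cycles of $G$---is precisely the combinatorial difficulty that the cut-vertex corollary avoids by factoring $Q_G=Q_{G_1}\cdots Q_{G_k}$, and that must be met head-on when the cycles of $G$ share edges in arbitrary patterns.
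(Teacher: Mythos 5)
You are attempting to prove a statement that the paper itself does not prove: it is stated there explicitly as a conjecture, left open after the authors settle the cases of cycles (Theorem~\ref{cycthm}), $D_{3,3}$ (Theorem~\ref{d33theo2}), $D_{m,n}$ (Theorem~\ref{dmntheo}), trees (Theorem~\ref{ttheo}), graphs glued along cut vertices, and arbitrary graphs over a PID. Your two-ingredient framework --- a divisibility lemma plus a witness family with coprime excess factors --- is a faithful abstraction of how all of those proofs work, and your spanning-tree observation is genuinely attractive: the row-operation argument (root a spanning tree $T$, subtract each vertex's row from its parent's, processing vertices deepest-first so the operations do not interfere) does show that $\prod_{e\in T}l_e$ divides $\big\vert F_1\ \ldots\ F_n\big\vert$ for every spanning tree $T$, which cleanly generalizes Lemma~\ref{cyclem} and Lemma~\ref{dmnlem}.

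However, the proposal has two genuine gaps, each fatal on its own. First, the identity $Q_G=\Big[\,\prod_{e\in T}l_e\;\big\vert\;T\ \text{a spanning tree of}\ G\,\Big]$ is only asserted (``it should follow \ldots by the same telescoping''); for an arbitrary graph the zero-trial definition of $Q_G$ is a product over vertices of least common multiples of greatest common divisors of path products, and no argument is given that this collapses to the spanning-tree least common multiple when independent cycles overlap in arbitrary patterns. This identity is not a cosmetic step: even your ``if'' direction depends on it, because the Cramer-rule bootstrap of Lemma~\ref{cycd331} needs $Q_G$ to divide $\big\vert F_1\ \ldots\ F\ \ldots\ F_n\big\vert$ for \emph{every} spline $F$, and if the zero-trial $Q_G$ were strictly larger than the spanning-tree lcm that divisibility would fail. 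Second --- and you say so yourself --- the witness family for the ``only if'' direction is never constructed: you describe the shape its determinants should have, explain why coprimality of the excess factors is automatic for a single cycle ($(l'_i)^{n-1}$) and for a diamond ($l'_jl'_k$), and then state that producing such a family for graphs whose cycles share edges ``is precisely the combinatorial difficulty'' and ``why the statement remains only a conjecture.'' A text whose key construction is declared to be the open obstacle is a research programme, not a proof; it identifies correctly what must be done, but it does not do it.
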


\section{Homogenization of Splines}

In this section we study splines over the polynomial ring $R = k[x_1 , \ldots , x_d]$. We first define homogeneous splines and the homogenization of a spline. Then we introduce the homogenization of an edge labeled graph. At the end of this section, we discuss freeness relations between the module of homogeneous splines and $R_{(G,\alpha)}$.

\begin{defn} Let $(G,\alpha)$ be an edge labeled graph over the polynomial ring $R = k[x_1 , \ldots , x_d]$ and let $F = (f_1 , \ldots , f_n) \in R_{(G,\alpha)}$. The degree of $F$ is defined as the total degree, which is
\begin{displaymath}
\text{deg } F = \text{max} \big\{ \text{deg } f_i \big\vert 1 \leq i \leq n \big\}.
\end{displaymath}
$F$ is called homogeneous if $\text{deg } f_i = \text{deg } F$ or $f_i = 0$ for all $i$.
\end{defn}

If $f \in R$, we define the homogenization ${}^h f \in \hat{R} = k[x_1 , \ldots , x_d , z]$ of $f$ by
\begin{displaymath}
{}^h f \big( x_1 , \ldots , x_d  , z\big) = z^{\partial f} f \left( \dfrac{x_1}{z} , \ldots , \dfrac{x_d}{z} \right)
\end{displaymath}
where $\partial f$ denotes the degree of $f$. If $F \in R^t$, the homogenization ${}^h F \in \hat{R^t}$ is defined by
\begin{displaymath}
{}^h F = {}^h \big( f_1 , \ldots , f_t \big) = \big( z^{\partial F - \partial f_1} ({}^h f_1) , \ldots , z^{\partial F - \partial f_t} ({}^h f_t) \big).
\end{displaymath}
where $\partial F$ denotes the maximum of the $\partial f_i$'s. If $f \in \hat{R}$, we set $f(1) = f \big( x_1 , \ldots , x_d , 1 \big)$ and if $F \in {\hat{R}}^t$ then $F(1) = \big( f_1 (1), \ldots , f_t (1) \big)$.

The homogenization operation satisfies the following properties:

\begin{prop} Let $F, G \in R^t$. Then
\begin{enumerate}[label=\emph{(\alph*)}]
	\item $z^{\partial F + \partial G} {}^h (F + G) = z^{\partial (F+G)} \big( z^{\partial G} {}^h F + z^{\partial F} {}^h G \big)$
	\item ${}^h F(1) = F$
\end{enumerate}
\label{homprop}
\end{prop}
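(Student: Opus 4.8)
The plan is to treat the two parts separately, reducing each vector statement to its one-polynomial analogue. Part (b) is the immediate one. First I would record that for a single $f \in R$ one has ${}^h f(1) = 1^{\partial f}\, f(x_1, \ldots, x_d) = f$, directly from the defining formula ${}^h f(x,z) = z^{\partial f} f(x_1/z, \ldots, x_d/z)$ by setting $z = 1$. Then, since the $i$-th component of ${}^h F$ is $z^{\partial F - \partial f_i}({}^h f_i)$, substituting $z = 1$ kills the prefactor (as $1^{\partial F - \partial f_i} = 1$) and returns ${}^h f_i(1) = f_i$. Hence ${}^h F(1) = F$ componentwise.

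For part (a), the heart of the matter is a scalar identity: for any $f, g \in R$,
\[
z^{\partial f + \partial g}\, {}^h(f+g) = z^{\partial(f+g)}\bigl(z^{\partial g}\,{}^h f + z^{\partial f}\,{}^h g\bigr).
\]
I would derive this straight from the definition. Writing $f(x/z) = z^{-\partial f}\,{}^h f$ and $g(x/z) = z^{-\partial g}\,{}^h g$ and summing gives ${}^h(f+g) = z^{\partial(f+g)}\bigl(z^{-\partial f}\,{}^h f + z^{-\partial g}\,{}^h g\bigr)$; multiplying through by $z^{\partial f + \partial g}$ clears the negative powers and yields the displayed polynomial identity in $\hat R$. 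One checks the resulting exponents are nonnegative, so this is a genuine equality in $\hat R$, valid even when cancellation forces $\partial(f+g) < \max(\partial f, \partial g)$.

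Then I would lift to vectors by comparing $i$-th components. Using the definition of vector homogenization, the $i$-th entry of the left side of (a) is $z^{\partial F + \partial G}\,z^{\partial(F+G)-\partial(f_i+g_i)}\,{}^h(f_i+g_i)$, while the $i$-th entry of the right side is $z^{\partial(F+G)}\bigl(z^{\partial G}z^{\partial F-\partial f_i}\,{}^h f_i + z^{\partial F}z^{\partial G - \partial g_i}\,{}^h g_i\bigr)$. Substituting the scalar identity into the left entry and collecting powers of $z$, both entries reduce to $z^{\partial(F+G)+\partial F + \partial G}\bigl(z^{-\partial f_i}\,{}^h f_i + z^{-\partial g_i}\,{}^h g_i\bigr)$, so the components agree for every $i$, which proves (a).

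The main obstacle is the degree bookkeeping: the vector homogenization is normalized by the \emph{maximal} degree $\partial F$ rather than the individual $\partial f_i$, so I must track three different degree corrections, namely $\partial F - \partial f_i$, $\partial G - \partial g_i$, and $\partial(F+G) - \partial(f_i+g_i)$, simultaneously and confirm they recombine to matching exponents. I would also address the degenerate cases where $f_i$, $g_i$, or $f_i + g_i$ vanishes, adopting the convention ${}^h 0 = 0$; there $g_i = -f_i$ forces both sides of the component identity to be zero, so the equality still holds trivially.
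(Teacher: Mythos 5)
Your proof is correct, but it takes a genuinely different route from the paper, which offers no argument of its own: the paper's ``proof'' of Proposition~\ref{homprop} is a bare citation to Section 5 of Chapter VII of Zariski--Samuel, where the relevant properties of homogenization are established for single polynomials. Your argument is self-contained: you derive the scalar identity $z^{\partial f+\partial g}\,{}^h(f+g)=z^{\partial(f+g)}\bigl(z^{\partial g}\,{}^h f+z^{\partial f}\,{}^h g\bigr)$ directly from the definition by working with $f(x_1/z,\ldots,x_d/z)=z^{-\partial f}\,{}^h f$ in the localization $\hat R[z^{-1}]$, note that both sides are genuine polynomials so the identity descends to $\hat R$ (valid since $z$ is a nonzerodivisor), and then carry out the componentwise exponent bookkeeping to lift it to vectors. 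That bookkeeping is exactly right: both $i$-th entries collapse to $z^{\partial(F+G)+\partial F+\partial G-\partial f_i}\,{}^h f_i + z^{\partial(F+G)+\partial F+\partial G-\partial g_i}\,{}^h g_i$, whose exponents are nonnegative because $\partial f_i\le\partial F$ and $\partial g_i\le\partial G$, and your treatment of vanishing components via the convention ${}^h 0=0$ disposes of the degenerate cases. What the paper's citation buys is brevity; what your proof buys is substance, since the vector statement with normalization by the maximal degrees $\partial F$, $\partial G$, $\partial(F+G)$ --- which is precisely the form invoked later in the proof of Lemma~\ref{homlem1} --- is the paper's own packaging and is not literally what the cited reference states, so your explicit verification of that layer is a genuine addition rather than a redundancy.
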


\begin{proof} See Section 5 of Chapter VII in~\cite{Zar}.
\end{proof}

We know that $R_{(G,\alpha)}$ is an $R$-module. In general, $R_{(G,\alpha)}$ may not be a graded $R$-module with standard grading. In order to see this, consider the following example:

\begin{ex} Let $(G, \alpha)$ be as the figure below.
\begin{figure}[H]
\begin{center}
\scalebox{0.16}{\includegraphics{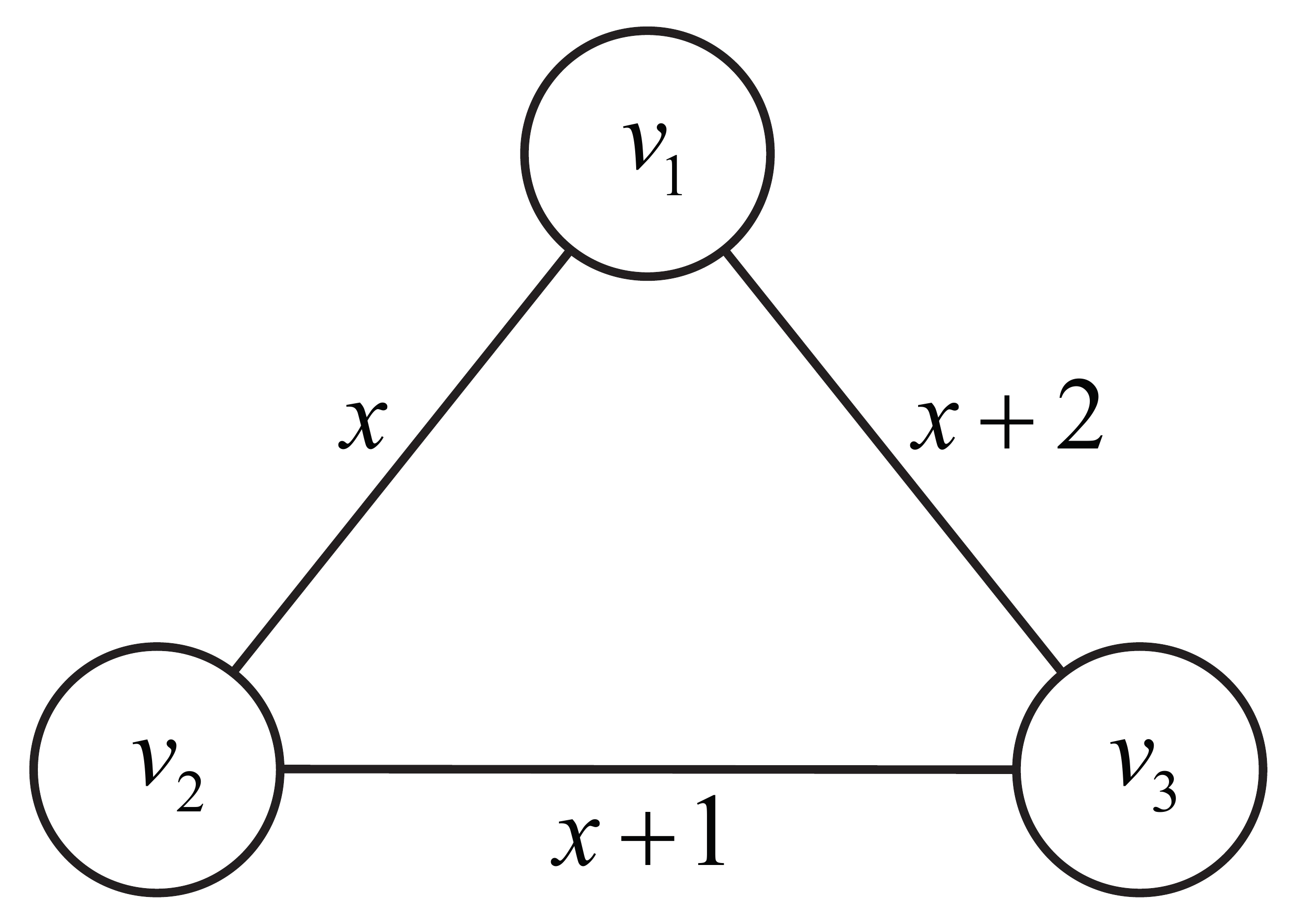}}
\caption{Example of spline module which is not graded}
\label{sp21}
\end{center}
\end{figure}
Take $F = \begin{bmatrix} x^2 + 2x + 1 \\ x+1 \\ 1 \end{bmatrix} \in R_{(G,\alpha)}$. If we rewrite $F$ as a sum of homogeneous splines by
\begin{displaymath}
F = \begin{bmatrix} x^2 + 2x + 1 \\ x+1 \\ 1 \end{bmatrix} = \begin{bmatrix} x^2 \\ 0 \\ 0 \end{bmatrix} + \begin{bmatrix} 2x \\ x \\ 0 \end{bmatrix} + \begin{bmatrix} 1 \\ 1 \\ 1 \end{bmatrix},
\end{displaymath}
then the first term of the sum, $(0,0,x^2)$ is not an element of $R_{(G,\alpha)}$. Hence $R_{(G,\alpha)}$ is not a graded $R$-module with the standard grading.

In order to obtain a graded module structure for $R_{(G,\alpha)}$, we define the homogenization of an edge labeled graph.
\end{ex}

\begin{defn} Let $(G,\alpha)$ be an edge labeled graph with base ring $R = k[x_1 , \ldots , x_d]$. The homogenization of $(G,\alpha)$ is defined by the edge labeling function $\hat{\alpha} : E \to \{ \text{ideals in } \hat{R} \}$ with $\hat{\alpha} (e) = {}^h \alpha(e)$ where $\hat{R} = k[x_1 , \ldots , x_d , z]$. Since the base ring is $\hat{R}$, we denote the set of splines on $(G,\hat{\alpha})$ by $\hat{R}_{(G,\hat{\alpha})}$.
\end{defn}

\begin{ex} The following figure illustrates the homogenization of an edge labeled graph:
\begin{figure}[H]
\begin{center}
\scalebox{0.16}{\includegraphics{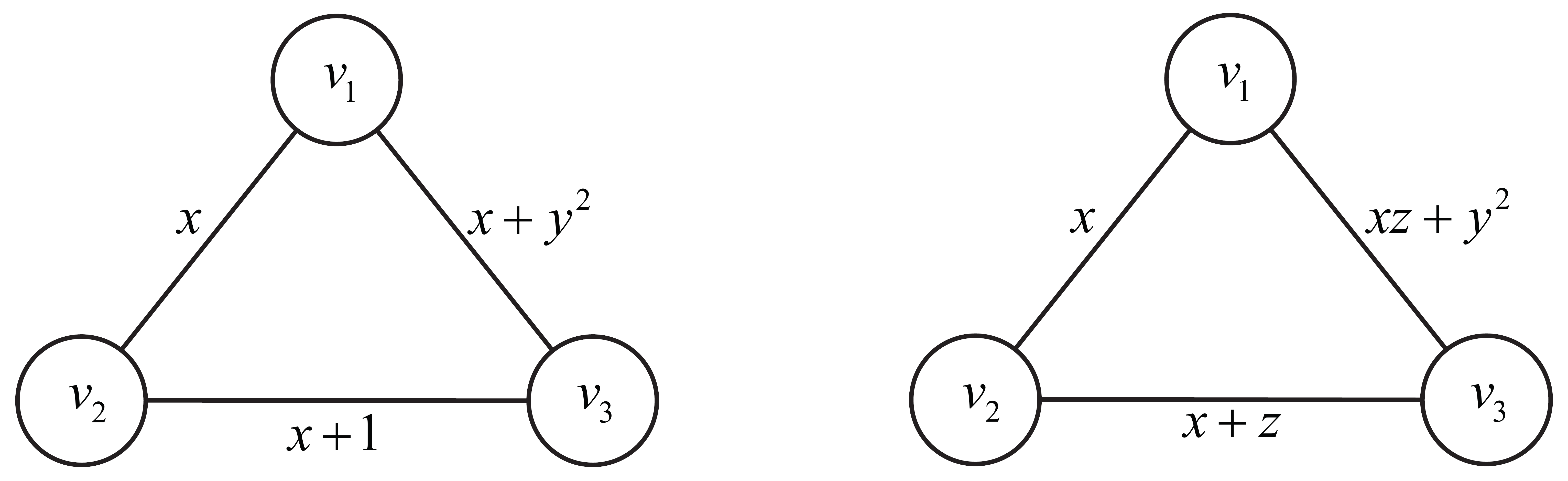}}
\caption{An edge labeled graph (left) and its homogenization (right)}
\label{sp2113}
\end{center}
\end{figure}
\end{ex}

\begin{lem} Let $F \in R_{(G,\alpha)}$. Then ${}^h F \in \hat{R}_{(G,\hat{\alpha})}$.
\label{homlem1}
\end{lem}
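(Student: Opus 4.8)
The statement to prove is Lemma~\ref{homlem1}: if $F \in R_{(G,\alpha)}$, then its homogenization ${}^h F$ lies in $\hat{R}_{(G,\hat{\alpha})}$. The plan is to work edge by edge. Fix an arbitrary edge $uv$ of $G$; I must show that the homogenized vertex labels satisfy the spline condition for the homogenized edge label, i.e. that the difference of the $u$- and $v$-components of ${}^h F$ lies in the ideal $\hat\alpha(uv) = {}^h\alpha(uv)$. Since $F$ is a spline, I start from the known relation $f_u - f_v \in \alpha(uv)$, which (as the paper labels edges by generators of principal ideals) means $f_u - f_v = g\cdot\ell_{uv}$ for some $g \in R$, where $\ell_{uv}$ generates $\alpha(uv)$. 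I then need to translate this divisibility relation through the homogenization operation.

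**Key step via Proposition~\ref{homprop}.**
The main tool is part (a) of Proposition~\ref{homprop}, the compatibility of homogenization with addition, applied to $F$ and $-F$ or more directly to the two-component situation. The cleanest route is to apply the additivity formula to the pair of polynomials $f_u$ and $-f_v$: from
\begin{displaymath}
z^{\partial f_u + \partial f_v}\,{}^h(f_u - f_v) = z^{\partial(f_u - f_v)}\big( z^{\partial f_v}\,{}^h f_u + z^{\partial f_u}\,{}^h(-f_v) \big),
\end{displaymath}
I can relate the homogenized difference of the components of ${}^hF$ to the homogenization of $f_u - f_v$. Because ${}^hF$ rescales each component by a power of $z$ to bring it up to the common degree $\partial F$, the difference of the $u$- and $v$-entries of ${}^hF$ equals $z^{\partial F - \partial f_u}({}^h f_u) - z^{\partial F - \partial f_v}({}^h f_v)$, and the formula above expresses this (up to powers of $z$) in terms of ${}^h(f_u - f_v)$. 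Since $\ell_{uv}$ divides $f_u - f_v$ in $R$, the homogenization $ {}^h(f_u - f_v)$ is divisible by ${}^h \ell_{uv}$ in $\hat R$ (homogenization is multiplicative on products, at worst off by a power of $z$, and $\hat R$ is a domain); hence the difference of components of ${}^hF$ lies in the ideal generated by ${}^h\ell_{uv} = \hat\alpha(uv)$, which is exactly what must be shown.

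**The main obstacle.**
I expect the delicate point to be the bookkeeping of the $z$-powers. Homogenization is not additive on the nose — the identity in Proposition~\ref{homprop}(a) carries correction factors $z^{\partial F},\, z^{\partial G},\, z^{\partial(F+G)}$ — and moreover $\partial(f_u - f_v)$ can be strictly smaller than $\max(\partial f_u, \partial f_v)$ when leading terms cancel. So establishing that the extra powers of $z$ multiplying ${}^h\ell_{uv}$ are genuinely integer powers (not negative), and that they do not obstruct membership in the ideal, is where care is needed. I would handle this by noting that $\hat\alpha(uv)$ is a principal ideal in the polynomial ring $\hat R$ and that $z$ is a nonzerodivisor, so it suffices to verify the divisibility after clearing denominators and then cancel the common $z$-powers; since $\hat R$ is a domain, cancellation is legitimate. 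Once the $z$-exponents are tracked correctly, the conclusion ${}^h F \in \hat{R}_{(G,\hat\alpha)}$ follows immediately because the edge $uv$ was arbitrary.
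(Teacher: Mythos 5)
Your proposal is correct and follows essentially the same route as the paper's own proof: both verify the spline condition edge by edge, apply Proposition~\ref{homprop}(a) to rewrite the difference $z^{\partial F - \partial f_u}({}^h f_u) - z^{\partial F - \partial f_v}({}^h f_v)$ as a power of $z$ times ${}^h(f_u - f_v)$, and conclude membership in $\hat{\alpha}(uv)$ from the divisibility $\ell_{uv} \mid (f_u - f_v)$ together with multiplicativity of homogenization. The $z$-power bookkeeping you flag as the delicate point resolves immediately, since $\partial(f_u - f_v) \le \max(\partial f_u , \partial f_v) \le \partial F$ guarantees the exponent $\partial F - \partial(f_u - f_v)$ is non-negative.
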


\begin{proof} Let $F = (f_1 , \ldots , f_t) \in R_{(G,\alpha)}$. Then
\begin{displaymath}
{}^h F = {}^h \big( f_1 , \ldots , f_t \big) = \big( z^{\partial F - \partial f_1} ({}^h f_1) , \ldots , z^{\partial F - \partial f_t} ({}^h f_t) \big).
\end{displaymath}
In order to see that ${}^h F \in \hat{R}_{(G,\hat{\alpha})}$, we need to check $z^{\partial F - \partial f_i} ({}^h f_i) - z^{\partial F - \partial f_j} ({}^h f_j) \in \hat{\alpha}(e_{ij})$ for all adjacent pair of vertices $v_i , v_j \in V(G)$. Let $v_i , v_j$ be two adjacent vertices of $G$. Since $F \in R_{(G,\alpha)}$, we have $f_i - f_j \in \alpha(e_{ij})$. Here we have
\begin{displaymath}
\begin{array}{ccl}
z^{\partial F - \partial f_i} ({}^h f_i) - z^{\partial F - \partial f_j} ({}^h f_j) &=& z^{\partial F - \partial f_i - \partial f_j} \big( z^{\partial f_j} ({}^h f_i) - z^{\partial f_i} ({}^h f_j) \big) \\ &=& z^{\partial F - \partial (f_i + f_j)} \underbrace{{}^h (f_i - f_j)}_{\in \hat{\alpha}(e_{ij})} \in \hat{\alpha}(e_{ij}).
\end{array}
\end{displaymath}
The last equality follows from Proposition~\ref{homprop} (a). Hence ${}^h F \in \hat{R}_{(G,\hat{\alpha})}$.
\end{proof}

\begin{lem} $\hat{R}_{(G,\hat{\alpha})}$ has a graded $\hat{R}$-module structure with standard grading.
\label{homlem2}
\end{lem}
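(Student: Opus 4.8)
The plan is to prove that $\hat{R}_{(G,\hat{\alpha})}$ is closed under taking homogeneous components, which is precisely the condition needed for a submodule of the standard-graded free module $\hat{R}^{n}$ to inherit a grading. The first step is to record the key structural fact that every edge label $\hat{\alpha}(e_{ij}) = {}^h\alpha(e_{ij})$ is a \emph{homogeneous} ideal of $\hat{R}$. Indeed, since $\alpha(e_{ij})$ is principal, generated by a single element $l_{ij} \in R$, its homogenization is $\hat{\alpha}(e_{ij}) = \big( {}^h l_{ij} \big)$, and an ideal generated by the single homogeneous polynomial ${}^h l_{ij}$ is homogeneous.

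Next I would take an arbitrary spline $F = (f_1 , \ldots , f_n) \in \hat{R}_{(G,\hat{\alpha})}$ and decompose each coordinate into homogeneous components, $f_i = \sum_{m \geq 0} f_i^{(m)}$, where $f_i^{(m)}$ is the part of degree $m$; then set $F^{(m)} = \big( f_1^{(m)} , \ldots , f_n^{(m)} \big)$. The goal becomes showing $F^{(m)} \in \hat{R}_{(G,\hat{\alpha})}$ for every $m$. Fixing an edge $e_{ij}$, the spline condition gives $f_i - f_j \in \hat{\alpha}(e_{ij})$, and because $\hat{\alpha}(e_{ij})$ is homogeneous, each homogeneous component of $f_i - f_j$ again lies in it. Since the degree-$m$ component of $f_i - f_j$ is exactly $f_i^{(m)} - f_j^{(m)}$, I obtain $f_i^{(m)} - f_j^{(m)} \in \hat{\alpha}(e_{ij})$ for every adjacent pair, so $F^{(m)}$ is a spline.

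Finally I would assemble this into the grading statement. Writing $\big( \hat{R}_{(G,\hat{\alpha})} \big)_m$ for the set of homogeneous splines of degree $m$ together with $0$, the decomposition $F = \sum_m F^{(m)}$ exhibits $\hat{R}_{(G,\hat{\alpha})} = \bigoplus_{m \geq 0} \big( \hat{R}_{(G,\hat{\alpha})} \big)_m$ as an internal direct sum of $k$-subspaces, and the compatibility $\hat{R}_p \cdot \big( \hat{R}_{(G,\hat{\alpha})} \big)_m \subseteq \big( \hat{R}_{(G,\hat{\alpha})} \big)_{p+m}$ is immediate, since multiplying a homogeneous spline of degree $m$ by a homogeneous polynomial of degree $p$ yields a homogeneous spline of degree $p+m$. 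This verifies the graded $\hat{R}$-module axioms.

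The step I expect to be the main, though minor, obstacle is justifying that $\hat{\alpha}(e_{ij})$ is genuinely homogeneous: this relies on the edge labels being principal, so that homogenizing a single generator produces the entire homogenized ideal. For a general non-principal ideal the homogenization need not be generated by the homogenizations of an arbitrary generating set, so the principality assumption is exactly what makes the argument go through cleanly. Once homogeneity of the edge labels is secured, the remainder is the routine observation that a submodule cut out by homogeneous linear conditions is graded.
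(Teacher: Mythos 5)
Your proof is correct and follows essentially the same route as the paper's: decompose a spline into homogeneous components and use the homogeneity of the edge ideals $\hat{\alpha}(e_{ij})$ to conclude that each component again satisfies the spline conditions. The only difference is that you explicitly justify why $\hat{\alpha}(e_{ij}) = \big({}^h l_{ij}\big)$ is a homogeneous ideal (using principality of the edge labels), a point the paper's proof simply asserts, so your version is if anything slightly more complete.
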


\begin{proof} Let $F = (f_1 , \ldots , f_t) \in \hat{R}_{(G,\hat{\alpha})}$. Let $F_m = (f_{1m} , \ldots , f_{tm})$ be the homogeneous component of $F$ of degree $m$. In order to see that $\hat{R}_{(G,\hat{\alpha})}$ has a graded $\hat{R}$-module structure, it is sufficient to see that $F_m \in \hat{R}_{(G,\hat{\alpha})}$. Since $F \in \hat{R}_{(G,\hat{\alpha})}$, $f_i - f_j \in \hat{\alpha}(e_{ij})$ for all adjacent pair of vertices $v_i , v_j \in V(G)$ where $\hat{\alpha}(e_{ij})$ is a homogeneous ideal in $\hat{R}$. Hence all homogeneous components of $f_i - f_j$ belongs to $\hat{\alpha}(e_{ij})$ and so $F_m \in \hat{R}_{(G,\hat{\alpha})}$.
\end{proof}

\begin{rem} Let $R_{(G,\alpha)}$ be a free $R$-module with basis $\mathcal{G} = \{ G_1 , \ldots , G_t \}$. Then it is trivial to expect that $\hat{R}_{(G,\hat{\alpha})}$ is also free $\hat{R}$-module with basis ${}^h \mathcal{G} = \{ {}^h G_1 , \ldots , {}^h G_t \}$ but this is not true in general. Consider the following example:
\label{reducedrem}
\end{rem}

\begin{ex} Let $(G, \alpha)$ be as in the figure below.
\begin{figure}[H]
\begin{center}
\scalebox{0.16}{\includegraphics{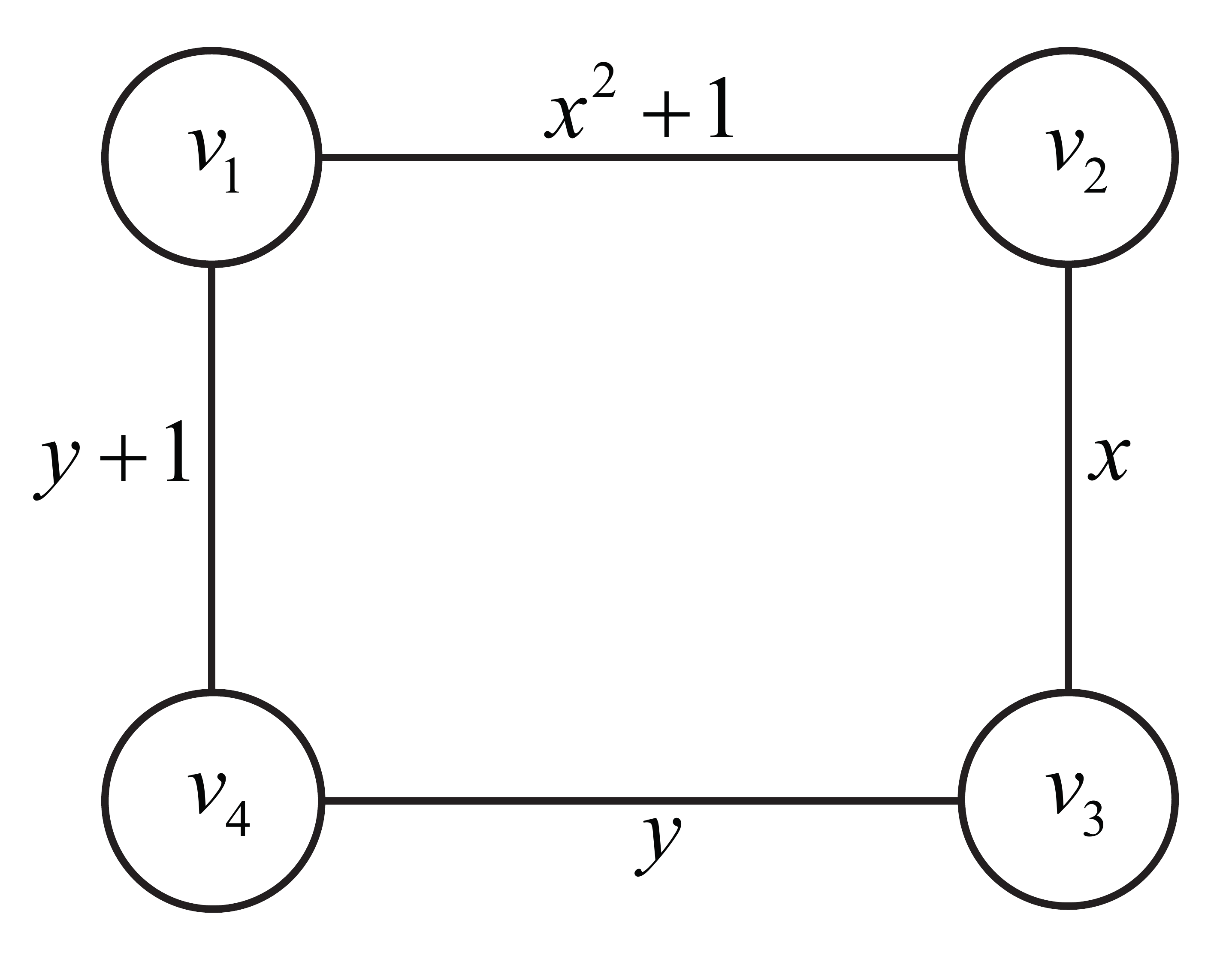}}
\caption{Edge labeled graph $(G, \alpha)$}
\label{sp71}
\end{center}
\end{figure}
\noindent
A flow-up basis for $R_{(G,\alpha)}$ is given by
\begin{displaymath}
\mathcal{G} = \left\{ \begin{bmatrix} 1 \\ 1 \\ 1 \\ 1 \end{bmatrix} , \begin{bmatrix} x^2 y + x^2 + y + 1 \\ x^2 + 1 \\ x^2 + 1 \\ 0 \end{bmatrix} , \begin{bmatrix} xy + x \\ x \\ 0 \\ 0 \end{bmatrix} , \begin{bmatrix} y^2 + y \\ 0 \\ 0 \\ 0 \end{bmatrix} \right\}.
\end{displaymath}
The homogenization of the edge labeled graph $(G, \alpha)$ is as in the figure below.
\begin{figure}[H]
\begin{center}
\scalebox{0.16}{\includegraphics{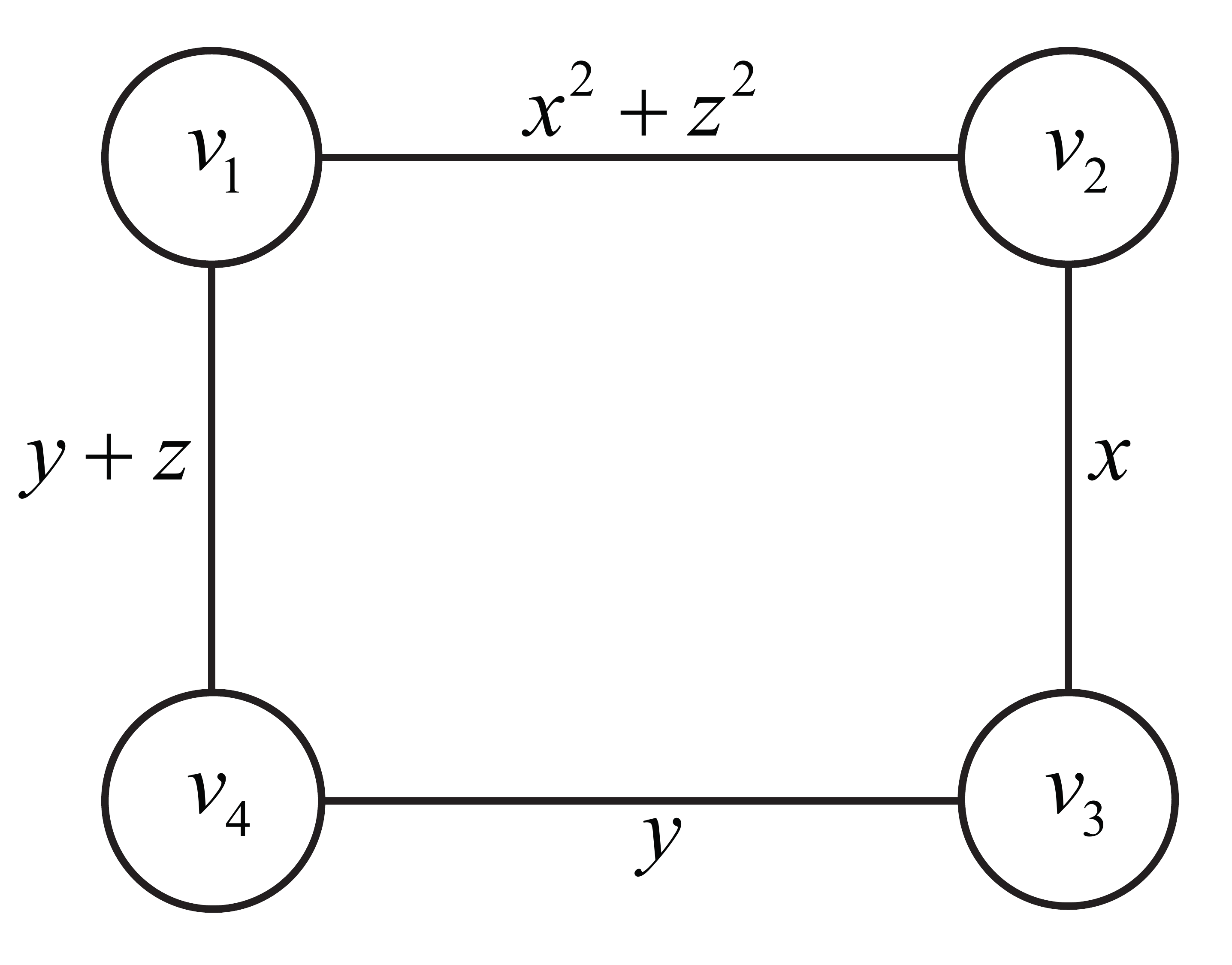}}
\caption{Homogenization of $(G, \alpha)$}
\label{sp72}
\end{center}
\end{figure}
\noindent
We obtain the set ${}^h \mathcal{G}$ by homogenizing the elements of $\mathcal{G}$.
\begin{displaymath}
{}^h \mathcal{G} = \left\{ \begin{bmatrix} 1 \\ 1 \\ 1 \\ 1 \end{bmatrix} , \begin{bmatrix} x^2 y + x^2 z + y z^2 + z^3 \\ x^2 z + z^3 \\ x^2 z + z^3 \\ 0 \end{bmatrix} , \begin{bmatrix} xy + xz \\ xz \\ 0 \\ 0 \end{bmatrix} , \begin{bmatrix} y^2 + yz \\ 0 \\ 0 \\ 0 \end{bmatrix} \right\}.
\end{displaymath}
Since ${}^h \mathcal{G}$ cannot generate $F = (0,0,xy,0) \in \hat{R}_{(G,\hat{\alpha})}$, it is not a basis for $\hat{R}_{(G,\hat{\alpha})}$. In fact, $\hat{R}_{(G,\hat{\alpha})}$ is not a free $\hat{R}$-module, although $R_{(G,\alpha)}$ is a free $R$-module.
\label{exreduced}
\end{ex}

A special type of basis for $R_{(G,\alpha)}$, which is called reduced basis, satisfies the expected property in Remark~\ref{reducedrem}.

\begin{defn} Let $(G,\alpha)$ be an edge labeled graph with $n$ vertices. Let $\mathcal{G} = \{ G_1 , \ldots , G_n \}$ be an $R$-module basis for $R_{(G,\alpha)}$. If for each $F \in R_{(G,\alpha)}$, $F = \sum r_i G_i$ implies $\text{deg } (r_i G_i) \leq \text{deg } F$, then $\mathcal{G}$ is called a reduced basis for $R_{(G,\alpha)}$.
\end{defn}

\begin{ex} The basis $\mathcal{G}$ of $R_{(G,\alpha)}$ in Example~\ref{exreduced} is not reduced. In order to see this, consider $F = (x,x,xy + x,x) \in R_{(G,\alpha)}$. Here we have

\begin{displaymath}
F = \underbrace{\begin{bmatrix} x \\ xy + x \\ x \\ x \end{bmatrix}}_{\text{degree } = 2} = \underbrace{x  \begin{bmatrix} 1 \\ 1 \\ 1 \\ 1 \end{bmatrix}}_{\text{degree } = 1} + \underbrace{0  \begin{bmatrix} x^2 y + x^2 + y + 1 \\ x^2 + 1 \\ x^2 + 1 \\ 0 \end{bmatrix}}_{\text{degree } = 0} + \underbrace{y  \begin{bmatrix} xy + x \\ x \\ 0 \\ 0 \end{bmatrix}}_{\text{degree } = 3} - \underbrace{x  \begin{bmatrix} y^2 + y \\ 0 \\ 0 \\ 0 \end{bmatrix}}_{\text{degree } = 3} .
\end{displaymath}
Some of the components on the right side has greater degree than $F$. Thus $\mathcal{G}$ is not a reduced basis for $R_{(G,\alpha)}$.
\label{exnotreduced}
\end{ex}

\begin{theo} \emph{\cite{Bil}} $R_{(G,\alpha)}$ has a reduced basis if and only if $\hat{R}_{(G,\hat{\alpha})}$ is free over $\hat{R}$.
\label{homtheo}
\end{theo}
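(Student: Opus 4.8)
The plan is to build everything out of the two maps already available in the excerpt: homogenization $F\mapsto {}^hF$, which by Lemma~\ref{homlem1} carries $R_{(G,\alpha)}$ into $\hat R_{(G,\hat\alpha)}$ and always produces \emph{homogeneous} elements, and dehomogenization $\hat F\mapsto\hat F(1)$ (substituting $z=1$). Since $z=1$ is a ring homomorphism $\hat R\to R$ sending each homogeneous ideal ${}^h\alpha(e)$ onto $\alpha(e)$, dehomogenization maps $\hat R_{(G,\hat\alpha)}$ back into $R_{(G,\alpha)}$, and Proposition~\ref{homprop}(b) gives ${}^hF(1)=F$, so dehomogenization retracts homogenization. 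I would first isolate the elementary but decisive fact that for a \emph{homogeneous} $\hat r\in\hat R$ one has $\hat r(1)=0\iff\hat r=0$ (setting $z=1$ is injective on each graded piece, since the distinct-degree components of $\hat r$ cannot cancel). I would also record the standard consequence of the graded Nakayama lemma over $\hat R=k[x_1,\dots,x_d,z]$ (with $\hat R_0=k$ a field): because $\hat R_{(G,\hat\alpha)}$ is graded by Lemma~\ref{homlem2}, it is free if and only if it admits a basis of homogeneous elements.

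For the forward direction, assume $R_{(G,\alpha)}$ has a reduced basis $\{G_1,\dots,G_n\}$ and set $\hat G_i={}^hG_i$, homogeneous of degree $d_i=\deg G_i$. To show these generate $\hat R_{(G,\hat\alpha)}$, take a homogeneous $\hat F$ of degree $m$, put $F=\hat F(1)$ so that $\hat F=z^{m-\deg F}\,{}^hF$, and expand $F=\sum_i r_iG_i$. The reduced condition gives $\deg(r_iG_i)\le\deg F$ for every summand, and this degree bound is exactly what lets me homogenize the sum through repeated use of Proposition~\ref{homprop}(a): the non-additivity of $(\cdot)^h$ only inserts compensating powers of $z$, which the bound forces to be nonnegative, yielding
\[
{}^hF=\sum_i z^{\deg F-\deg(r_iG_i)}\,({}^hr_i)\,({}^hG_i),
\]
an honest $\hat R$-combination, whence $\hat F\in\langle {}^hG_1,\dots,{}^hG_n\rangle$. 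For independence, a relation $\sum_i\hat s_i\,\hat G_i=0$ may be taken homogeneous (the module is graded and the $\hat G_i$ homogeneous); applying $z=1$ and Proposition~\ref{homprop}(b) gives $\sum_i\hat s_i(1)G_i=0$, so each $\hat s_i(1)=0$, and homogeneity forces $\hat s_i=0$ by the isolated fact above. Thus $\{\hat G_i\}$ is a homogeneous basis and $\hat R_{(G,\hat\alpha)}$ is free.

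For the converse, assume $\hat R_{(G,\hat\alpha)}$ is free; choose a homogeneous basis $\{\hat G_1,\dots,\hat G_n\}$ (degrees $d_i$) and set $G_i=\hat G_i(1)\in R_{(G,\alpha)}$. Spanning: for $F\in R_{(G,\alpha)}$ expand ${}^hF=\sum_i\hat s_i\hat G_i$ with $\hat s_i$ homogeneous and apply $z=1$, using ${}^hF(1)=F$ to get $F=\sum_i\hat s_i(1)G_i$. Since $R$ is a domain, $R_{(G,\alpha)}$ has rank $n$ by~\cite{Gil}, so a spanning set of $n$ elements is automatically a basis, which also yields uniqueness $r_i=\hat s_i(1)$ of coefficients. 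Reducedness then follows from the degree estimate $\deg(r_iG_i)\le\deg(\hat s_i(1))+\deg(\hat G_i(1))\le(\deg F-d_i)+d_i=\deg F$, where $\deg\hat s_i=\deg F-d_i$ comes from homogeneity of the expansion of ${}^hF$.

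The hard part will be the degree and $z$-exponent bookkeeping in the forward direction: Proposition~\ref{homprop}(a) only equates homogenizations up to explicit powers of $z$, so everything rests on verifying that the reduced inequalities $\deg(r_iG_i)\le\deg F$ are precisely what keep these exponents nonnegative when the two-term identity is iterated across the whole sum (including the possibility of top-degree cancellation among the $r_iG_i$). I expect the conceptual content to be thin but this step to demand the most care, and it is exactly the step that consumes the hypothesis — the equivalence ``reduced'' $\iff$ ``the homogenized basis stays a basis'' is the crux of the theorem.
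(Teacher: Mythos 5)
Your proposal is correct and takes essentially the same route as the paper: the paper's proof of Theorem~\ref{homtheo} is simply a citation of Theorem 8.5 in Billera--Rose~\cite{Bil}, whose classical argument is exactly your homogenization/dehomogenization correspondence (a reduced basis $\{G_i\}$ homogenizes to a homogeneous basis $\{{}^h G_i\}$ of $\hat{R}_{(G,\hat{\alpha})}$, and a homogeneous basis, which exists by graded Nakayama once $\hat{R}_{(G,\hat{\alpha})}$ is free and finitely generated, dehomogenizes via $z=1$ to a reduced basis). The details you supply, the degree bookkeeping that consumes reducedness in the forward direction and the rank-$n$ spanning argument in the converse, are sound.
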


\begin{proof} Proof is the same as in the classical case of splines which is given by Theorem 8.5. in~\cite{Bil}.
\end{proof}

Billera and Rose~\cite{Bil} gave a criteria that determines whether a basis of the module of classical splines is reduced or not. We give similar results for splines on cycles, diamond graphs and trees. First we need a lemma.

\begin{lem} Let $\{ F_1 , \ldots , F_n \} \subset R_{(G,\alpha)}$ be homogeneous elements. Then the degree of the determinant $\big\vert F_1 \text{ } F_2 \text{ } \ldots \text{ } F_n \big\vert$ is either $0$ or the sum of the degrees of the $F_i$'s.
\label{homlem3}
\end{lem}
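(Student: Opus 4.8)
The plan is to expand the determinant by the Leibniz (permutation) formula and to track degrees column by column, exploiting the fact that the columns are homogeneous splines. Writing $d_i = \deg F_i$, the definition of a homogeneous spline says that every component of $F_i$ is either $0$ or homogeneous of degree $d_i$. Thus, in the matrix whose columns are $F_1, \ldots, F_n$, all nonzero entries of a fixed column $i$ are homogeneous of one and the same degree $d_i$.

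First I would write
\[
\big\vert F_1 \text{ } F_2 \text{ } \ldots \text{ } F_n \big\vert = \sum_{\sigma \in \mathscr{S}_n} \operatorname{sgn}(\sigma) \prod_{i=1}^n f_{i\sigma(i)},
\]
where the factor contributed by column $i$, namely $f_{i\sigma(i)}$, is the $\sigma(i)$-th component of the spline $F_i$. (The bottom-to-top ordering of the rows only reverses the rows, changing the determinant by a fixed sign and hence not affecting its degree.) Since $\sigma$ is a permutation, each summand contains exactly one component drawn from each of $F_1, \ldots, F_n$. Next I would observe that such a summand is either $0$ (if one of the chosen components vanishes) or a product of $n$ homogeneous polynomials of degrees $d_1, \ldots, d_n$; in the latter case it is homogeneous of degree $\sum_{i=1}^n d_i$. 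Therefore every nonvanishing term of the expansion is homogeneous of the same degree $D := \sum_{i=1}^n \deg F_i$.

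Finally I would conclude that, being a sum of homogeneous polynomials all of degree $D$, the determinant is itself homogeneous of degree $D$; so either it is the zero element (all terms cancel), corresponding to the $0$ alternative, or it is a nonzero homogeneous polynomial whose degree is exactly $\sum_i \deg F_i$. This is precisely the stated dichotomy. The argument is purely a bookkeeping of degrees, so I do not anticipate a real obstacle; the only points needing a word of care are the harmless sign introduced by the bottom-to-top row convention, and the degenerate case in which cancellation yields the zero determinant (together with the convention for the degree there).
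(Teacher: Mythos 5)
Your proposal is correct. The paper proves this lemma by induction on $n$: a direct computation for the $2\times 2$ case, then cofactor expansion along the $(k+1)$-th column for the inductive step. Your argument performs the same degree bookkeeping but in one shot, via the Leibniz expansion over permutations, and this buys two things. First, it makes explicit that the spline condition is irrelevant: the conclusion holds for any $n\times n$ matrix whose columns are ``homogeneous vectors'' (all nonzero entries of column $i$ homogeneous of one and the same degree $d_i$), whether or not those columns lie in $R_{(G,\alpha)}$. Second, it sidesteps a wrinkle in the paper's induction: the $k\times k$ minors produced by cofactor expansion arise by deleting a row, so they are generally not determinants of splines, and the induction hypothesis as phrased for elements of $R_{(G,\alpha)}$ does not literally apply to them --- the statement one must actually induct on is precisely your statement about matrices with homogeneous columns. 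Your two side remarks are also right: the bottom-to-top row convention only changes the determinant by a fixed sign, hence not its degree, and cancellation may produce the zero determinant, which is exactly the ``$0$'' alternative in the statement (subject to the usual convention for the degree of the zero polynomial).
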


\begin{proof} We use induction on $n$. If $n = 2$, then
\begin{displaymath}
\text{deg } \big\vert F \text{ } G \big\vert = \text{deg } \begin{vmatrix} f_1 & g_1 \\ f_2 & g_2 \end{vmatrix} = \text{deg } (g_1 f_2 - f_1 g_2).
\end{displaymath}
Here notice that each nonzero term of the products $f_1 g_2$ and $g_1 f_2$ has degree $\text{deg } F + \text{deg } G$, since $F$ and $G$ are homogeneous splines. Hence $\text{deg } \big\vert F \text{ } G \big\vert = \text{deg } (g_1 f_2 - f_1 g_2) = \text{deg } F + \text{deg } G$ or $0$.

Assume that for homogeneous splines $\{ F_1 , \ldots , F_k \} \subset R_{(G,\alpha)}$, we have $\text{deg } \big\vert F_1 \text{ } F_2 \text{ } \ldots \text{ } F_k \big\vert = \sum\limits_{i=1} ^k \text{deg } F_i$ or $0$. Let $\{ F_1 , \ldots , F_{k+1} \} \subset R_{(G,\alpha)}$ be homogeneous elements. By expanding the determinant\linebreak $\big\vert F_1 \text{ } F_2 \text{ } \ldots \text{ } F_{k+1} \big\vert$ on $(k+1)$-th column, which is $F_{k+1}$, we see that $\text{deg } \big\vert F_1 \text{ } F_2 \text{ } \ldots \text{ } F_{k+1} \big\vert = \sum\limits_{i=1} ^{k+1} \text{deg } F_i$ or $0$.
\end{proof}

\begin{theo} Let $(C_n , \alpha)$ be an edge labeled cycle with edge labels $\{ l_1 , \ldots , l_n \}$ and let $\mathscr{F} = \{ F_1 , \ldots , F_n \}$ be a basis for $R_{(C_n,\alpha)}$. If $\mathscr{F}$ is a reduced basis, then
\begin{displaymath}
\sum\limits_{i=1} ^n \emph{deg } F_i = \sum\limits_{i=1} ^n \emph{deg } l_i - \emph{deg } \big( l_1 , \ldots , l_n \big).
\end{displaymath}
\label{homtheo2}
\end{theo}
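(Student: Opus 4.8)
The plan is to homogenize and apply the cycle determinant criterion over $\hat R = k[x_1,\ldots,x_d,z]$, which is a UFD and hence a GCD domain, so that Theorem~\ref{cycthm} is available there.

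First I would exploit the reducedness of $\mathscr F$. By the proof of Theorem~\ref{homtheo} (the classical Billera--Rose argument), a reduced basis homogenizes to a basis, so ${}^h\mathscr F = \{{}^h F_1,\ldots,{}^h F_n\}$ is a basis for $\hat R_{(C_n,\hat\alpha)}$; by Lemma~\ref{homlem1} each ${}^h F_i$ indeed lies in $\hat R_{(C_n,\hat\alpha)}$, is homogeneous, and satisfies $\deg {}^h F_i = \deg F_i$. The edge labels of $(C_n,\hat\alpha)$ are the homogenizations ${}^h l_i$, since the homogenization of a principal ideal $\langle l_i\rangle$ is $\langle {}^h l_i\rangle$.

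Next, Theorem~\ref{cycthm} applied to $(C_n,\hat\alpha)$ over $\hat R$ yields $\big\vert {}^h F_1 \; \cdots \; {}^h F_n \big\vert = r' \cdot \hat Q$, where $r' \in \hat R$ is a unit (a nonzero element of $k$) and $\hat Q := \frac{{}^h l_1\cdots {}^h l_n}{({}^h l_1,\ldots,{}^h l_n)}$ is the $Q_{C_n}$-value of $(C_n,\hat\alpha)$. This determinant is nonzero, so the ``degree $0$'' alternative of Lemma~\ref{homlem3} (the zero polynomial) is excluded and the determinant is homogeneous of degree $\sum_i \deg {}^h F_i = \sum_i \deg F_i$. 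Comparing degrees, and using $\deg r' = 0$, gives $\sum_i \deg F_i = \deg \hat Q = \sum_i \deg {}^h l_i - \deg({}^h l_1,\ldots,{}^h l_n)$.

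Finally I would translate the right-hand side back to the original labels. As $\deg {}^h l_i = \deg l_i$, everything reduces to the identity $\deg({}^h l_1,\ldots,{}^h l_n) = \deg(l_1,\ldots,l_n)$, which I expect to be the main obstacle. Writing $g = (l_1,\ldots,l_n)$ and $l_i = g\,m_i$ with $(m_1,\ldots,m_n)=1$, multiplicativity of homogenization gives ${}^h l_i = {}^h g \cdot {}^h m_i$, so ${}^h g$ divides $({}^h l_1,\ldots,{}^h l_n)$. For the opposite divisibility I would show $({}^h m_1,\ldots,{}^h m_n)$ is a unit: any homogeneous common divisor $p$ of the ${}^h m_i$ satisfies $p(1) \mid (m_1,\ldots,m_n)=1$ after setting $z=1$ (using Proposition~\ref{homprop}(b)), so $p(1)$ is constant and $p = c\,z^k$; but no homogenization is divisible by $z$, since setting $z=0$ recovers the nonzero top-degree form of $m_i$, forcing $k=0$. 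Hence $({}^h l_1,\ldots,{}^h l_n) = {}^h g$ up to a unit, so $\deg({}^h l_1,\ldots,{}^h l_n) = \deg g$, and the chain of equalities closes to give $\sum_i \deg F_i = \sum_i \deg l_i - \deg(l_1,\ldots,l_n)$.
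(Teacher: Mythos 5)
Your proposal is correct and follows essentially the same route as the paper's proof: homogenize, use Theorem~\ref{homtheo} to convert the reduced basis $\mathscr{F}$ into a basis ${}^h\mathscr{F}$ of $\hat{R}_{(C_n,\hat{\alpha})}$, apply Theorem~\ref{cycthm} together with Lemma~\ref{homlem3} to compute the degree of the determinant, and then compare degrees using $\deg {}^h F_i = \deg F_i$ and $\deg {}^h l_i = \deg l_i$. The only notable difference is that you explicitly verify the identity $\deg \big( {}^h l_1 , \ldots , {}^h l_n \big) = \deg \big( l_1 , \ldots , l_n \big)$ (via multiplicativity of homogenization and the fact that no homogenization is divisible by $z$), a step the paper uses silently in its final chain of equalities.
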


\begin{proof} Since $\mathscr{F}$ is a basis for $R_{(C_n,\alpha)}$, we have 
\begin{displaymath}
\big\vert F_1 \text{ } F_2 \text{ } \ldots \text{ } F_n \big\vert = \dfrac{l_1 \cdots l_n}{(l_1 , \ldots , l_n)}
\end{displaymath}
by Theorem~\ref{homtheo} and hence
\begin{displaymath}
\text{deg } \big\vert F_1 \text{ } F_2 \text{ } \ldots \text{ } F_n \big\vert = \sum\limits_{i=1} ^n \text{deg } l_i - \text{deg } \big( l_1 , \ldots , l_n \big).
\end{displaymath}
\noindent
Since $\mathscr{F}$ is reduced basis for $R_{(C_n,\alpha)}$, the set ${}^h \mathscr{F} = \{ {}^h F_1 , \ldots , {}^h F_n \}$ is a  basis for $\hat{R}_{(C_n,\hat{\alpha})}$ by Theorem~\ref{homtheo}. By using Theorem~\ref{cycthm} and Lemma~\ref{homlem3}, we have

\begin{displaymath}
\begin{array}{ccl}
\sum\limits_{i=1} ^n \text{deg } {}^h F_i &=& \text{deg } \big\vert {}^h F_1 \text{ } {}^h F_2 \text{ } \ldots \text{ } {}^h F_n \big\vert = \text{deg } \left( \dfrac{{}^h l_1 \cdots {}^h l_n}{({}^h l_1 , \ldots , {}^h l_n)} \right) \\ &=& \sum\limits_{i=1} ^n \text{deg } {}^h l_i - \text{deg } \big( {}^h l_1 , \ldots , {}^h l_n \big).
\end{array}
\end{displaymath}
Here notice that $\text{deg } F_i = \text{deg } {}^h F_i$ and $\text{deg } l_i = \text{deg } {}^h l_i$. Hence we get
\begin{displaymath}
\begin{array}{ccl}
\sum\limits_{i=1} ^n \text{deg } F_i &=& \sum\limits_{i=1} ^n \text{deg } {}^h F_i = \sum\limits_{i=1} ^n \text{deg } {}^h l_i - \text{deg } \big( {}^h l_1 , \ldots , {}^h l_n \big)\vspace{.3cm} \\ &=& \sum\limits_{i=1} ^n \text{deg } l_i - \text{deg } \big( l_1 , \ldots , l_n \big).
\end{array}
\end{displaymath}
\end{proof}

A similar statement of Theorem~\ref{homtheo2} can be given for diamond graph $D_{m,n}$ and trees as below. They can be proved similarly as Theorem~\ref{homtheo2}.

\begin{theo} Let $(D_{m,n} , \alpha)$ be the edge labeled diamond graph as in the Figure~\ref{dmn} and let $\mathscr{F} = \{ F_1 , \ldots , F_{m+n-2} \}$ be a basis for $R_{(D_{m,n},\alpha)}$. If $\mathscr{F}$ is a reduced basis, then
\begin{displaymath}
\sum\limits_{i=1} ^{m+n-2} \emph{deg } F_i = \sum\limits_{i=1} ^{m+n-1} \emph{deg } l_i - \emph{deg } \big( (l_2 , \ldots , l_n) (l_{n+1} , \ldots , l_{m+n-1}) \text{ , } l_1 (l_2 , \ldots , l_{m+n-1}) \big).
\end{displaymath}
\label{homtheo4}
\end{theo}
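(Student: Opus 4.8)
The plan is to mirror the proof of Theorem~\ref{homtheo2}, replacing the cycle basis criterion by its diamond-graph analogue Theorem~\ref{dmntheo}. First I would invoke the hypothesis that $\mathscr{F}$ is a basis: by Theorem~\ref{dmntheo} we have $\big\vert F_1 \text{ } \ldots \text{ } F_{m+n-2} \big\vert = r \cdot Q_{D_{m,n}}$ for a unit $r$, where $Q_{D_{m,n}}$ is the explicit rational expression in the edge labels $l_1, \ldots, l_{m+n-1}$ recorded in the opening lemma of Section~\ref{secdmn}. This pins down the degree of the determinant of $\mathscr{F}$, but since the $F_i$ need not be homogeneous I cannot yet extract $\sum \text{deg } F_i$ from it; this is precisely why reducedness enters.

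Next I would pass to the homogenized graph. Since $\mathscr{F}$ is reduced, Theorem~\ref{homtheo} guarantees that $\hat R_{(D_{m,n},\hat\alpha)}$ is free and that ${}^h\mathscr{F} = \{ {}^h F_1, \ldots, {}^h F_{m+n-2}\}$ is a basis for it. Applying Theorem~\ref{dmntheo} over $\hat R$, whose edge labels are ${}^h l_1, \ldots, {}^h l_{m+n-1}$, gives $\big\vert {}^h F_1 \text{ } \ldots \text{ } {}^h F_{m+n-2}\big\vert = \hat r \cdot Q_{D_{m,n}}(\hat\alpha)$ for a unit $\hat r$, where $Q_{D_{m,n}}(\hat\alpha)$ is the same formula in the homogenized labels. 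Because the ${}^h F_i$ are homogeneous splines (Lemma~\ref{homlem1}) and this determinant is nonzero, Lemma~\ref{homlem3} forces
\begin{displaymath}
\sum_{i=1}^{m+n-2} \text{deg } {}^h F_i = \text{deg } \big\vert {}^h F_1 \text{ } \ldots \text{ } {}^h F_{m+n-2}\big\vert = \text{deg } Q_{D_{m,n}}(\hat\alpha).
\end{displaymath}
Expanding the right-hand side via the explicit formula yields $\sum_{i=1}^{m+n-1}\text{deg } {}^h l_i - \text{deg }\big( ({}^h l_2, \ldots, {}^h l_n)({}^h l_{n+1}, \ldots, {}^h l_{m+n-1}) \text{ , } {}^h l_1 ({}^h l_2, \ldots, {}^h l_{m+n-1})\big)$. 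Finally, since homogenization preserves total degree, $\text{deg } {}^h F_i = \text{deg } F_i$ and $\text{deg } {}^h l_i = \text{deg } l_i$, and the claimed identity follows once the degree of the homogenized denominator is matched with that of the original.

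The main obstacle is exactly this last matching: one must verify that homogenization preserves the degree of the gcd-and-product expression in the denominator, i.e. that the degree of the homogenized denominator equals $\text{deg }\big( (l_2,\ldots,l_n)(l_{n+1},\ldots,l_{m+n-1}), l_1(l_2,\ldots,l_{m+n-1})\big)$. The clean way is to observe that for any nonzero $f$ the homogenization ${}^h f$ carries no factor of $z$, since its specialization at $z=0$ is the top-degree form of $f$, which is nonzero; consequently every greatest common divisor and every product formed from the ${}^h l_i$ is again free of the factor $z$, so dehomogenizing such a homogeneous expression preserves its degree. Combined with the fact that the homogenization of a greatest common divisor divides the greatest common divisor of the homogenizations, together with Proposition~\ref{homprop}(b), this shows that each greatest common divisor and product occurring in the denominator has the same degree before and after homogenization. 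This is the only point at which more than a routine transcription of the cycle argument is needed.
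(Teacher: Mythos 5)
Your proposal is correct and takes essentially the same route as the paper: the paper proves Theorem~\ref{homtheo4} by transcribing the proof of Theorem~\ref{homtheo2}, replacing Theorem~\ref{cycthm} by Theorem~\ref{dmntheo} and $Q_{C_n}$ by $Q_{D_{m,n}}$, exactly as you do. Your closing paragraph, showing that homogenization preserves the degree of the gcd-and-product expression in the denominator (via ${}^h f$ having no factor of $z$ and Proposition~\ref{homprop}(b)), supplies a justification that the paper leaves implicit even in the cycle case, where it silently uses $\text{deg}\,({}^h l_1 , \ldots , {}^h l_n) = \text{deg}\,(l_1 , \ldots , l_n)$.
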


\begin{theo} Let $(G , \alpha)$ be an edge labeled tree with $n$ vertices and $k$ edges. Let $\mathscr{F} = \{ F_1 , \ldots , F_n \}$ be a basis for $R_{(G , \alpha)}$. If $\mathscr{F}$ is a reduced basis, then
\begin{displaymath}
\sum\limits_{i=1} ^n \emph{deg } F_i = \sum\limits_{i=1} ^k \emph{deg } l_i.
\end{displaymath}
\label{homtheo3}
\end{theo}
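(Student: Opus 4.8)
The plan is to follow the argument of Theorem~\ref{homtheo2} essentially verbatim, substituting the tree ingredients for the cycle ingredients. First I would use the hypothesis that $\mathscr{F}$ is a reduced basis: by Theorem~\ref{homtheo} this is equivalent to $\hat{R}_{(G,\hat{\alpha})}$ being free over $\hat{R}$, and more precisely the homogenized set ${}^h\mathscr{F} = \{ {}^h F_1 , \ldots , {}^h F_n \}$ is then a basis for the graded module $\hat{R}_{(G,\hat{\alpha})}$. Each ${}^h F_i$ is homogeneous by construction, and since homogenization does not change total degree we have $\text{deg } F_i = \text{deg } {}^h F_i$ for all $i$ and $\text{deg } l_i = \text{deg } {}^h l_i$ for every edge label. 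The underlying graph of the homogenization is unchanged, so the homogenization of a tree is again an edge labeled tree, and all the tree results of this section apply over the GCD domain $\hat{R} = k[x_1 , \ldots , x_d , z]$.

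Next I would evaluate $\big\vert {}^h F_1 \text{ } \ldots \text{ } {}^h F_n \big\vert$ in two ways. On the one hand, ${}^h\mathscr{F}$ is a basis for $\hat{R}_{(G,\hat{\alpha})}$, so Theorem~\ref{ttheo} shows this determinant equals a unit times ${}^h l_1 \cdots {}^h l_k$ (the tree formula $Q_G = l_1 \cdots l_k$ applied to the labeling $\hat{\alpha}$); hence its degree is $\sum_{i=1}^k \text{deg } {}^h l_i = \sum_{i=1}^k \text{deg } l_i$. On the other hand, the ${}^h F_i$ are homogeneous splines, so Lemma~\ref{homlem3} tells us that $\text{deg } \big\vert {}^h F_1 \text{ } \ldots \text{ } {}^h F_n \big\vert$ equals either $0$ or $\sum_{i=1}^n \text{deg } {}^h F_i$. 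Combining the two computations and then replacing homogenized degrees by the original ones yields
\begin{displaymath}
\sum_{i=1}^n \text{deg } F_i = \sum_{i=1}^n \text{deg } {}^h F_i = \sum_{i=1}^k \text{deg } {}^h l_i = \sum_{i=1}^k \text{deg } l_i ,
\end{displaymath}
which is the desired identity.

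The step I expect to require the most care is reconciling these two evaluations through the dichotomy in Lemma~\ref{homlem3}: one must rule out the degenerate possibility that the determinant has degree $0$ while $\sum_{i=1}^n \text{deg } {}^h F_i > 0$. This cannot occur, because the determinant is, up to a unit, the product ${}^h l_1 \cdots {}^h l_k$, whose degree is exactly $\sum_{i=1}^k \text{deg } l_i$; so whenever some edge label is nonconstant the determinant genuinely has positive degree and Lemma~\ref{homlem3} forces the nonzero case, while if every edge label is constant then both sides of the claimed identity are $0$ and there is nothing to prove. A secondary point, immediate from the tree formula, is simply that the relevant $Q$ for the homogenized tree is the product of the homogenized edge labels.
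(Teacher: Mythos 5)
Your proposal is correct and takes essentially the same approach as the paper: the paper proves this theorem by repeating the argument of Theorem~\ref{homtheo2}, namely homogenizing the reduced basis via Theorem~\ref{homtheo}, applying the relevant basis criterion (here Theorem~\ref{ttheo}, with $Q_G = l_1 \cdots l_k$) over $\hat{R}$, and comparing degrees through Lemma~\ref{homlem3}. One small remark on your final paragraph: the cleanest way to settle the dichotomy in Lemma~\ref{homlem3} is to note that the determinant is a unit multiple of ${}^h l_1 \cdots {}^h l_k \neq 0$, so the nonzero alternative of the lemma always applies; this also covers your all-constant-labels case, where $\sum_i \text{deg } F_i = 0$ is then a conclusion of that same observation rather than being vacuously true.
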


\end{document}